\def\BB{{\mathbb B}}
\def\HH{{\mathbb H}}
\def\KK{{ K}}
\def\NN{{\mathbb N}}
\def\PP{{\mathbb P}}
\def\cG{{\cal G}}
\def\cO{{\cal O}}
\def\cK{{\cal K}}
\def\cO{{\cal O}}
\newtheorem{lema}{Lemma}[section]
\newtheorem{prop}{Proposition}[section]
\newtheorem{Cor}{Corolary}[section]
\newtheorem{Teo}{Theorem}[section]
\newtheorem{Rem}{Remark}[section]
\DeclareMathOperator{\tot}{tot}
\DeclareMathOperator{\coker}{coker}
\DeclareMathOperator{\Ann}{Ann}
\DeclareMathOperator{\Ker}{Ker}
\DeclareMathOperator{\Imag}{Im}
\DeclareMathOperator{\Hom}{Hom}
\newcommand{\FlechaDer}[1]{\stackrel{#1}{\longrightarrow}}
\newcommand{\FlechaIzq}[1]{\stackrel{#1}{\longleftarrow }}
\newcommand{\TAU}[1]{\bigl( \begin{smallmatrix}  c_{{#1}1} \cr c_{{#1}2}\end{smallmatrix} \bigr)}
\begin{document}
\title{On the Hyperhomology of the Small Gobelin
in Codimension 2}
\author{X.\,G\'omez-Mont\thanks{Supported by   CONACYT  134081, M\'exico.}\ \ \   and L. N\'u\~nez-Betancourt\thanks{Supported by   CONACYT  207063, M\'exico.} }

\maketitle
\begin{abstract}
Given a zero-dimensional Gorenstein algebra $\mathbb{B}$ and two syzygies between two elements $f_1,f_2\in\BB$, one constructs a double complex of $\mathbb{B}$-modules, ${\cal G}_\mathbb{B},$  called the small Gobelin. We describe an inductive procedure to construct the even and odd hyperhomologies of this complex. For high degrees, the difference $\dim \mathbb{H}_{j+2}({\cal G}_\mathbb{B}) - \dim\mathbb{H}_j({\cal G}_\mathbb{B})$  is constant, but possibly with a different value for even and odd degrees. We describe two flags of ideals in  $\mathbb{B}$ which codify the above differences of dimension. The motivation to study this double complex comes from understanding the tangency condition between a vector field and a complete intersection, and invariants constructed in the zero locus of the vector field $\hbox{Spec}(\mathbb{B})$.
\end{abstract}

\section*{Introduction}

Let $\KK$ be a field
of characteristic $0$
and  $\cO$ a local Noetherian $\KK$-algebra.
Consider a matrix identity over $\cO$

\begin{equation}
\begin{pmatrix}\varphi_{11}  \cdots \varphi_{1N}\cr
\vdots \ddots \vdots \cr
\varphi_{\ell 1}  \cdots \varphi_{\ell N}\cr
\end{pmatrix}\label{(0.1)}
\begin{pmatrix}X_1\cr
 \vdots \cr
X_N\cr
\end{pmatrix}
=
\begin{pmatrix}c_{11}&  \cdots & c_{1k}\cr
\vdots &\ddots& \vdots \cr
c_{\ell 1} & \cdots& c_{\ell k}\cr
\end{pmatrix}
\begin{pmatrix}f_1\cr
\vdots \cr
f_k\cr
\end{pmatrix}
\end{equation}
and write this equation  as
\begin{equation}
\varphi X = c f.
\label{E0}
\end{equation}
In particular if $\varphi$ is the derivative $Df$ of a differentiable function $f$, the identity expresses the
fact that the vector field $X$ is tangent to the variety defined by $f=0$.

Bothmer, Ebeling and the first author have constructed in \cite{BEG}
from the matrix identity (\ref{E0})
a double complex of free $\cO$-modules,
called the Gobelin, obtained by weaving Buchsbaum-Eisenbud and Koszul complexes
as strands, and that under
suitable hypothesis can be used to
compute the homology of the complex
of $\frac{\cO}{\Imag(f^*)}$-modules:

\begin{equation}
\label{PrimerComplejo}
 0  \FlechaIzq{}
 \frac{\cO}{\Imag(f^*)}   \FlechaIzq{X^*}
 \frac{\cO^{\oplus N}}{\Imag(\varphi^*)
  } \otimes_K \frac{\cO}{\Imag(f^*)}  \FlechaIzq{X^*}
 \frac{\Lambda^2\cO^{ \oplus N}}{\Lambda^2\Imag(\varphi^*)}
\otimes_K \frac{\cO}{\Imag(f^*)}
    \FlechaIzq{X^* }
  \cdots
\end{equation}
Similarly, they also constructed in \cite{BEG} another double complex $\cG^\ell_\BB$,
called the small Gobelin (see \cite{BEG} or Figure \ref{Gobelin22}), which is
quasi-isomorphic to the Gobelin, but formed
with free $ \BB:=\frac{\cO}{\Imag(X^*)} $-modules.
The homology of (\ref{PrimerComplejo})
has certain periodicity properties when $k=\ell=1$ (see \cite{G}),
and the objective of this work is to use the small
Gobelin to set up an inductive procedure on $\ell$ to see
how  these periodicity properties generalize.
In this paper, we carry this out for $k=2$ and $\ell=1,2$.
\vskip5mm

If $\cG^{\ell-1}_\BB$ denotes the small Gobelin formed with
the equation (\ref{(0.1)}) by deleting the last row on the matrices
in the equation, we obtain in Theorem \ref{ExactaGobelinos} an exact sequence
of double complexes
$$
0\FlechaDer{}\cG^{\ell-1}_\BB\FlechaDer{i}
 \cG^{\ell}_\BB\FlechaDer{\sigma^*}\cG^{\ell}_\BB (-1,-1)\FlechaDer{} 0
$$
giving rise to a long exact sequence of hyperhomology groups

$$
\ldots \FlechaDer{} \HH_j(\cG^{\ell-1}_\BB )\FlechaDer{\iota }
\HH_j( \cG^{\ell}_\BB )\FlechaDer{\sigma^*}
\HH_{j-2}( \cG^{\ell}_\BB  )\FlechaDer{\partial}
\HH_{j-1}(\cG^{\ell-1}_\BB ) \FlechaDer{} \dots.
$$
This long exact sequence
relates the hyperhomology groups  $\HH_j(\cG^{\ell}_\BB )$.
 with
$\HH_{j-2}(\cG^{\ell}_\BB )$
with kernels
and cokernels in the hyperhomology groups of $\cG^{\ell-1}_\BB$,
and this is the basis of our inductive procedure.
\vskip 5mm

From Section  \ref{Sec3} onwards, we restrict to the case where
 $\cO$ is an $N$ dimensional Gorenstein $\KK$-algebra  (in particular a local regular ring or the ring of power
 series over $\KK$ (formal or convergent) or a complete intersection),
assume given a matrix identity (\ref{(0.1)}) with $k=\ell=2$,
that $X_1,\ldots,X_N$ is an $\cO$-regular sequence and denote by
$\BB:=\frac{\cO}{\Imag(X^*)}$. It is a 0 dimensional Gorenstein ring, which is a finite dimensional  $\KK$-vector space.
The relation (\ref{(0.1)}) becomes in $\BB$:
\begin{equation}
\label{syzygyreduced}
\begin{pmatrix} c_{11}&c_{12}\cr
c_{21}&c_{22}\cr
\end{pmatrix}
\begin{pmatrix} f_1\cr
f_2\cr
\end{pmatrix} = \begin{pmatrix} 0\cr
0\cr
\end{pmatrix}
\end{equation}

The total complex $\tot(\cG^{2}_\BB)$ associated to the small Gobelin $\cG^{2}_\BB$ is
\\
\begin{equation}
\label{hyperhomologysequenceG3}
0 \xleftarrow{}
\BB^1
\xleftarrow{\left(
\begin{smallmatrix}{{f}}_{1}&
{{f}}_{{2}}\\
\end{smallmatrix}
\right)}
\BB^2
\xleftarrow{\left(
\begin{smallmatrix}{-{{f}}_{{2}}}&
{{c}}_{11}&
{{c}}_{{2}1}\\
{{f}}_{1}&
{{c}}_{1{2}}&
{{c}}_{{2}{2}}\\
\end{smallmatrix}
\right)}
\BB^3
\xleftarrow{\left(
\begin{smallmatrix}{-{{c}}_{1{2}}}&
{-{{c}}_{{2}{2}}}&
{{c}}_{11}&
{{c}}_{{2}1}\\
{{f}}_{1}&
0&
{{f}}_{{2}}&
0\\
0&
{{f}}_{1}&
0&
{{f}}_{{2}}\\
\end{smallmatrix}
\right)}
\BB^4
\xleftarrow{\left(
\begin{smallmatrix}{-{{f}}_{{2}}}&
0&
{{c}}_{11}&
{{c}}_{{2}1}&
0\\
0&
{-{{f}}_{{2}}}&
0&
{{c}}_{11}&
{{c}}_{{2}1}\\
{{f}}_{1}&
0&
{{c}}_{1{2}}&
{{c}}_{{2}{2}}&
0\\
0&
{{f}}_{1}&
0&
{{c}}_{1{2}}&
{{c}}_{{2}{2}}\\
\end{smallmatrix}
\right)}
\BB^5
\xleftarrow{}
\cdots
\end{equation}
\\
$$
	\cdots
	\xleftarrow{}
	\BB^{2j-1}
	\xleftarrow{\varphi_j}
	\BB^{2j}
	\xleftarrow{\psi_j}
	\BB^{2j+1}
	\xleftarrow{}
	\cdots
$$
\noindent
\renewcommand{\arraystretch}{0.5}
\[
{\scriptsize \scriptstyle
	\varphi_j =
	\left(
	\begin{array}{*3{r@{\,}}r|*3{r@{\,}}r}
	-c_{12} & -c_{22}&&& c_{11} & c_{21}&& \\
	 &\ddots & \ddots && & \ddots & \ddots & \\
	& & -c_{12} & -c_{22}&& & c_{11} & c_{21}  \\
	&&&&&&\\\hline
	&&&&&&\\
	f_1 &&&& f_2 &\\
	& \ddots &&&& \ddots \\
	&& \ddots &&&& \ddots \\
	&&& f_1 &&&& f_2
	\end{array}
	\right), \quad
	\psi_j =
	\left(
	\begin{array}{*2{c@{\,}}c|c@{\,}c@{}c@{\,}c}
	-f_2 & & &c_{11} & c_{21}&& \\
	&\ddots & && \ddots & \ddots & \\
	&& -f_2 & && c_{11} & c_{21}  \\
	&&&&&&\\\hline
	&&&&&&\\
	f_1 &&& c_{12} & c_{22}&\\
	&\ddots&& & \ddots & \ddots & \\
	&&f_1& & & c_{12} & c_{22}
	\end{array}
	\right)}
\]

By choosing a linear map $L:\BB \longrightarrow \KK$ which is non-zero on the 1-dimensional socle on $\BB$,
we obtain a non-degenerate bilinear form:

$$\cdot_L:\BB \times \BB \longrightarrow \KK \hskip 2cm  a\cdot_L b:=L(a\cdot b),$$
where $a\cdot b = ab$ denotes the multiplication of $a$ and $b$ in the ring $\BB$ (see \cite[Sec.21.2]{E}).
We induce also a non-degenerate $\KK$-bilinear form on (the finite $\KK$-dimensional vector space) $\BB^2$ by

$$
 \begin{pmatrix} a_{1 } \cr
a_{2 } \cr
\end{pmatrix} \cdot_L
\begin{pmatrix} b_{1 } \cr
b_{2 } \cr
\end{pmatrix} := L(a_1\cdot b_1+a_2\cdot b_2)$$

Introducing the homology (cohomology) module of the Koszul complexes over
$\BB$ defined by $f_1$ and $f_2$ (Section \ref{3.3}):
 $$
H_1(\cK_\BB(f_1,f_2)) := \frac{<\begin{pmatrix} f_1 \cr f_2 \cr \end{pmatrix}>^\perp}
{<\begin{pmatrix} -f_2 \cr f_1 \cr \end{pmatrix}>}
\ \ , \ \
H^1(\cK_\BB(f_1,f_2)^*) := \frac{<\begin{pmatrix} -f_2 \cr f_1 \cr \end{pmatrix}>^\perp}
{<\begin{pmatrix} f_1 \cr f_2 \cr \end{pmatrix}>},
$$
where   $<*>$ denotes the $\BB$-module generated by $*$ and $*^\perp$ is the $\cdot_L$-orthogonal subspace to $*$.
 The above bilinear form on $\BB$ induces
non-degenerate bilinear forms on the first (co)homology groups. We have that

 $$
\begin{pmatrix} c_{11} \cr
c_{12} \cr
\end{pmatrix},
\begin{pmatrix}  c_{21}\cr
 c_{22}\cr
\end{pmatrix}
\in
H_1(\cK_\BB(f_1,f_2))
\ \ , \ \
\begin{pmatrix} -c_{12} \cr
c_{11} \cr
\end{pmatrix},
\begin{pmatrix}  -c_{22}\cr
 c_{21}\cr
\end{pmatrix}
\in
H^1(\cK_\BB(f_1,f_2)^*)
$$
are elements in the first Koszul  homology  (cohomology) module of
$f_1,f_2$ over $\BB$.
\vskip 5mm

The hyperhomology of the small Gobelin $\cG^1_\BB$ constructed from the first syzygy
$c_{11}f_1+c_{12}f_2=0$ over $\BB$ is:

\begin{prop} \label{lema2}
The hyperhomology groups of the small Gobelin $\cG^1_\BB$
are

\begin{equation}
\HH_0(\cG^1_\BB) = \frac{\BB}{(f_1,f_2)} \hskip 1cm  , \hskip 1cm
\HH_1(\cG^1_\BB) = \frac{H_1(\cK_\BB(f_1,f_2))}
{<\begin{pmatrix} c_{11} \cr c_{12} \cr \end{pmatrix}>}
\end{equation}
and for $j\geq1$
 \begin{equation}
\HH_{2j}(\cG^1_\BB) = \frac
{<\begin{pmatrix} -f_{2} \cr c_{11} \cr \end{pmatrix},
  \begin{pmatrix}  f_{1} \cr c_{12} \cr \end{pmatrix}>^\perp}
{<\begin{pmatrix} -c_{12} \cr f_{1} \cr \end{pmatrix},
\begin{pmatrix} c_{11} \cr f_{2} \cr \end{pmatrix}>}
\hskip 1cm , \hskip 1cm
\HH_{2j+1}(\cG^1_\BB) = \frac
{<
  \begin{pmatrix}  -c_{12} \cr c_{11} \cr \end{pmatrix}>^\perp_{H_1(\cK_\BB(f_1,f_2))}}
{<
\begin{pmatrix} c_{11} \cr c_{12} \cr \end{pmatrix}>_{H_1(\cK_\BB(f_1,f_2))}}
\label{H2G1}
\end{equation}
where these last 2 groups are equidimensional
as $\KK$-vector spaces.
\end{prop}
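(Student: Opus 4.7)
The plan is to identify $\tot(\cG^1_\BB)$ explicitly as a complex that is $2$-periodic from position two on, and then read off the hyperhomology as kernel-mod-image of two $2{\times}2$ matrices; all identifications will be made using the perfect pairing $\cdot_L$ on the $0$-dimensional Gorenstein ring $\BB$.

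Specializing the Gobelin construction (\ref{hyperhomologysequenceG3}) to a single syzygy $(c_{11},c_{12})$, or equivalently using the short exact sequence $0\to\cG^1_\BB\to\cG^2_\BB\to\cG^2_\BB(-1,-1)\to 0$ from Theorem~\ref{ExactaGobelinos} to solve for the ranks, yields
$$0\xleftarrow{}\BB\xleftarrow{(f_1\ f_2)}\BB^2\xleftarrow{M_1}\BB^2\xleftarrow{M_2}\BB^2\xleftarrow{M_1}\BB^2\xleftarrow{M_2}\cdots,\qquad M_1=\begin{pmatrix}-f_2 & c_{11}\\ f_1 & c_{12}\end{pmatrix},\ M_2=\begin{pmatrix}-c_{12} & c_{11}\\ f_1 & f_2\end{pmatrix}.$$
Direct $2\times 2$ multiplication using $c_{11}f_1+c_{12}f_2=0$ gives $(f_1\,f_2)M_1=M_1M_2=M_2M_1=0$, so this is a complex that is $2$-periodic from position~$2$. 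For the low degrees, $\HH_0=\BB/(f_1,f_2)$ is immediate; non-degeneracy of $\cdot_L$ gives $\ker(f_1\,f_2)=\langle\binom{f_1}{f_2}\rangle^\perp$, while reading columns yields $\Img M_1=\langle\binom{-f_2}{f_1},\binom{c_{11}}{c_{12}}\rangle$. Quotienting by the Koszul boundary $\langle\binom{-f_2}{f_1}\rangle$ first produces $\HH_1=H_1(\cK_\BB(f_1,f_2))/\langle\binom{c_{11}}{c_{12}}\rangle$.

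For $j\ge1$, periodicity gives $\HH_{2j}=\ker M_1/\Img M_2$ and $\HH_{2j+1}=\ker M_2/\Img M_1$. Reading the rows of $M_1$ and the columns of $M_2$ as vectors gives the $\HH_{2j}$ formula at once. For $\HH_{2j+1}$ I would push into $H_1(\cK_\BB(f_1,f_2))$: the inclusion $\ker M_2\subset\langle\binom{f_1}{f_2}\rangle^\perp$ comes from the second row of $M_2$, and $\langle\binom{-f_2}{f_1}\rangle$ sits inside both $\ker M_2$ (since $c_{12}f_2+c_{11}f_1=0$) and $\Img M_1$ (it is $M_1e_1$). Quotienting numerator and denominator by $\langle\binom{-f_2}{f_1}\rangle$ lands them in $H_1$; the residual constraint on $\ker M_2$ is $v\cdot\binom{-c_{12}}{c_{11}}=0$ in $\BB$, which through the non-degenerate induced pairing $H_1\times H^1\to\KK$ is precisely $\langle\binom{-c_{12}}{c_{11}}\rangle^\perp_{H_1}$, and the residual generator of $\Img M_1$ is $\binom{c_{11}}{c_{12}}$.

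Equidimensionality is a one-line rank-nullity check: both $M_1,M_2:\BB^2\to\BB^2$ act between equal-dimensional $\KK$-vector spaces, so $\dim_\KK\ker M_i+\dim_\KK\Img M_i=2\dim_\KK\BB$ for $i=1,2$, giving
$$\dim\HH_{2j}-\dim\HH_{2j+1}=(\dim\ker M_1+\dim\Img M_1)-(\dim\ker M_2+\dim\Img M_2)=0.$$
The main obstacle is the bookkeeping in the third paragraph: correctly identifying which residual pieces of $\ker M_2$ and $\Img M_1$ land in $H_1$, and recognizing the paper's $^\perp_{H_1}$ as orthogonality with respect to the $\KK$-valued pairing $H_1\times H^1\to\KK$ induced by $\cdot_L$. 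Once the total complex of $\cG^1_\BB$ is in hand, the rest is linear algebra in $\BB$ combined with the perfect self-duality afforded by the $0$-dimensional Gorenstein structure.
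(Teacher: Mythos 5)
Your proposal is correct and follows essentially the same route as the paper: it writes down the $2$-periodic total complex of $\cG^1_\BB$ with the same two $2\times 2$ matrices (the paper's $C_\psi$ and $C_\varphi$), reads off $\HH_0$, $\HH_1$ and the kernel-mod-image descriptions of $\HH_{2j}$, $\HH_{2j+1}$ via the non-degenerate pairing, and proves equidimensionality by the same rank--nullity count. Your identification of the odd groups inside $H_1(\cK_\BB(f_1,f_2))$ through the pairing $H_1\times H^1\to\KK$ just spells out what the paper leaves as ``one computes directly.''
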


The invariants that we will   use to describe the hyperhomology of
$\HH_*(\cG^2_\BB)$ in terms of
$\HH_*(\cG^1_\BB)$
are two    flags of
ideals in the ring $ \BB$:

\begin{equation}
\label{flag1}
0 = L _0 \subset L _1 \subset \cdots \subset L _\infty \subset  F_\infty \subset \cdots \subset F_1 \subset F_0 =  \BB
\end{equation}
and
\begin{equation}
\label{flag2}
0 = L^\prime_0 \subset L^\prime_1 \subset \cdots \subset L^\prime_\infty  \subset F^\prime_\infty \subset \cdots \subset F^\prime_1 \subset F^\prime_0 =  \BB
\end{equation}
where the first flag is defined for ascending $j\geq1$ by:
$$L_j:=(L_{j-1}\begin{pmatrix}  c_{21} \cr c_{22}\end{pmatrix} :
\begin{pmatrix}c_{11} \cr c_{12}\end{pmatrix}
)_{H_1(\cK_\BB(f_1,f_2) )}
 \ \ ,\ \
 F_j :=(F_{j-1}\begin{pmatrix} c_{21} \cr c_{22}\end{pmatrix} :
\begin{pmatrix}c_{11} \cr c_{12}\end{pmatrix}
)_{H_1(\cK_\BB(f_1,f_2))}
  \subset \BB$$
and $L_\infty$ and $F_\infty$ are the smallest and largest ideals in $\BB$    satisfying
$I=(I\begin{pmatrix} c_{21} \cr c_{22}\end{pmatrix}:
\begin{pmatrix}c_{11} \cr c_{12}\end{pmatrix}
)$, respectively. The other flag is defined by inverting
the roles of the 2 syzygies:
$$L^\prime_j:=(L^\prime_{j-1}\begin{pmatrix}  c_{11} \cr c_{12}\end{pmatrix} :
\begin{pmatrix}c_{21} \cr c_{22}\end{pmatrix}
)_{H_1(\cK_\BB(f_1,f_2) )}
 \ \ ,\ \
 F^\prime_j :=(F^\prime_{j-1}\begin{pmatrix} c_{11} \cr c_{12}\end{pmatrix} :
\begin{pmatrix}c_{21} \cr c_{22}\end{pmatrix}
)_{H_1(\cK_\BB(f_1,f_2))}
  \subset \BB.$$

Our main result is:

\begin{Teo} \label{CDos}
Let $\BB$ be a   Gorenstein $\KK$-algebra of dimension $0$,
assume given a matrix identity (\ref{syzygyreduced}),
then   the hyperhomology groups of the small Gobelin
$\cG^2_\BB$ are:
$$\HH_{0} (\cG^2_\BB) =\frac{\BB}{(f_1,f_2)}
\hskip 1cm,\hskip1cm
\HH_{1} (\cG^2_\BB) = \frac{ H_1(\cK_\BB(f_1,f_2))}
{<
\begin{pmatrix}c_{11} \cr c_{12}\end{pmatrix},
\begin{pmatrix}c_{21} \cr c_{22}\end{pmatrix}>_{H_1(\cK_\BB(f_1,f_2))}}
$$
  and for $j\geq1$ we have exact sequences:

\begin{equation}
\label{teo1}
0 \xleftarrow{}  \frac{F_{j-1}}{F_{j-1}\cap F_1^\prime}
\xleftarrow{\partial}
\HH_{2j-2}(\cG^{2}_\BB)
\xleftarrow{\sigma_{2j}^* }
\HH_{2j}(\cG^{2}_\BB)
\xleftarrow{i_{2j}}
\frac{\HH_{2j}(\cG^{1}_\BB)}{\frac{\Ann_\BB( L^\prime_j \cap L_1  )}{\Ann_\BB( L_1  )}
\begin{pmatrix}1 \cr 0\end{pmatrix}}
\xleftarrow{}
0
\end{equation}

and

\begin{equation}
\label{teo2}
0 \xleftarrow{}
\frac{\Ann_\BB( L^\prime_j \cap  L_1  )}{\Ann_\BB( L_1  )}
\xleftarrow{\partial}
\HH_{2j-1}(\cG^{2}_\BB)
\xleftarrow{\sigma_{2j+1}^*}
\HH_{2j+1}(\cG^{2}_\BB)
\xleftarrow{i_{2j+1}}
 \frac{\HH_{2j+1}(\cG^{1}_\BB)
}{\frac{F_j}{F_j \cap F_1^\prime}\begin{pmatrix}c_{21} \cr c_{22}\end{pmatrix}}
\xleftarrow{}
0
\end{equation}
 \end{Teo}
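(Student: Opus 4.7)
The plan is to combine a direct low-degree computation with an induction on $j$ driven by the long exact sequence of Theorem \ref{ExactaGobelinos} (specialized to $\ell=2$) and the explicit description of $\HH_*(\cG^1_\BB)$ in Proposition \ref{lema2}.

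For the base cases $\HH_0(\cG^2_\BB)$ and $\HH_1(\cG^2_\BB)$, I would read off the answer directly from the total complex \eqref{hyperhomologysequenceG3}. The group $\HH_0$ is the cokernel of the first differential $(f_1,f_2):\BB^2\to\BB$, giving $\BB/(f_1,f_2)$. For $\HH_1$ one identifies the kernel of $(f_1,f_2)$ with $\langle (f_1,f_2)\rangle^\perp$; the image of the second differential is generated by the three columns $\binom{-f_2}{f_1}$, $\binom{c_{11}}{c_{12}}$, $\binom{c_{21}}{c_{22}}$; the first of these quotients the kernel down to $H_1(\cK_\BB(f_1,f_2))$, while the remaining two give the stated presentation.

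For the inductive part, Theorem \ref{ExactaGobelinos} with $\ell=2$ yields
$$\ldots \to \HH_j(\cG^1_\BB) \xrightarrow{\iota} \HH_j(\cG^2_\BB) \xrightarrow{\sigma^*} \HH_{j-2}(\cG^2_\BB) \xrightarrow{\partial} \HH_{j-1}(\cG^1_\BB) \to \ldots ,$$
which splits at each position into the four-term exact sequence
$$0 \to \HH_j(\cG^1_\BB)/\ker(\iota_j) \to \HH_j(\cG^2_\BB) \xrightarrow{\sigma^*_j} \HH_{j-2}(\cG^2_\BB) \to \Img(\partial_{j-2}) \to 0.$$
The sequences \eqref{teo1} and \eqref{teo2} are the even- and odd-index specializations of this format, so the problem reduces to identifying $\ker(\iota_j)$ and $\Img(\partial_j)$ with the stated ideal quotients.

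The main obstacle — and where the technical work sits — is this identification, which I would carry out by induction on $j$. At the chain level the maps $\iota$ and $\sigma^*$ coming from $0\to\cG^1_\BB\to\cG^2_\BB\to\cG^2_\BB(-1,-1)\to 0$ are governed by the two columns $\binom{c_{11}}{c_{12}}$ and $\binom{c_{21}}{c_{22}}$; the induced action of $\sigma^*$ on homology therefore executes, at each step, a colon operation by $\binom{c_{11}}{c_{12}}$ followed by multiplication by $\binom{c_{21}}{c_{22}}$, which is precisely the recursion defining $F_j, L_j$ (and the mirror recursion for $F'_j, L'_j$). For the initial case $j=1$, a snake-lemma computation on $\partial: \HH_0(\cG^2_\BB)\to \HH_1(\cG^1_\BB)$ produces $F_0/(F_0\cap F'_1)=\BB/F'_1$. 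For the inductive step, each new application of the four-term sequence inserts one more colon into the flag, while the intersections with $F'_1$ (resp.\ $L_1$) encode the one-step interference from the dual syzygy, which is always the same. The annihilator descriptions $\Ann_\BB(L'_j\cap L_1)/\Ann_\BB(L_1)$ in \eqref{teo2} then arise from applying the Gorenstein/Matlis duality provided by the non-degenerate pairing $\cdot_L$, which converts the $\BB$-submodules of $H_1(\cK_\BB(f_1,f_2))$ appearing in Proposition \ref{lema2} into their annihilators in $\BB$. The most delicate bookkeeping will be verifying the parity-dependent shift between \eqref{teo1} (featuring $F_{j-1}$) and \eqref{teo2} (featuring $F_j$), which reflects the one-step asymmetry between the presentations of $\HH_{2j}(\cG^1_\BB)$ and $\HH_{2j+1}(\cG^1_\BB)$ in Proposition \ref{lema2}; checking that the inductive hypothesis propagates correctly through both four-term sequences simultaneously is the place where I would expect to spend the most effort.
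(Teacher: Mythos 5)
Your skeleton coincides with the paper's: the long exact sequence from Theorem \ref{ExactaGobelinos}, its splitting into four-term pieces, and the identification of the extreme terms with flag data, with Gorenstein duality responsible for the annihilator quotients. The difficulty is that those identifications \emph{are} the theorem, and your sketch has a genuine gap exactly there. On the even side, the paper's route is an explicit cycle analysis of $\tot(\cG^2_\BB)$ (Proposition \ref{prop1}): one needs not only that the leading component $b_1$ of a $2j$-cycle lies in $F_j$ (your ``colon followed by multiplication'' recursion gives this direction), but also the converse, that every $b_1\in F_j$ extends to a cycle; only both halves, fed into the explicit boundary formula (\ref{frontera1}) $\partial_{2j+1}(\alpha)=b_1\,(c_{21},c_{22})^t$, yield $\Imag(\partial_{2j+1})\cong F_j/(F_j\cap F_1^\prime)$ (Lemma \ref{lema4a}). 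You assert the recursion ``is precisely the recursion defining $F_j$'' but never address this surjectivity half, which is what makes the cokernel in (\ref{teo2}) correct rather than merely an upper bound.

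The more serious gap is the term $\frac{\Ann_\BB(L^\prime_j\cap L_1)}{\Ann_\BB(L_1)}\bigl(\begin{smallmatrix}1\cr 0\end{smallmatrix}\bigr)$, i.e.\ the identification of $\ker(i_{2j})=\Imag(\partial_{2j})$. Your proposal says this ``arises from Gorenstein/Matlis duality converting the submodules of $H_1(\cK_\BB(f_1,f_2))$ in Proposition \ref{lema2} into annihilators,'' but no such direct conversion produces it: the decisive objects are \emph{intersections of the two flags}, which never enter your one-flag-at-a-time recursion. In the paper these intersections come from a separate cocycle analysis of the dual complex $\cG^{2*}_\BB$ (Proposition \ref{prop2}: the components $a_i$ of a cocycle lie in $L_i\cap L^\prime_{j+1-i}$, and conversely), combined with Lemma \ref{lema4} and Lemma \ref{lema5} giving $\Imag(i_{2j}^*)=\pi_1^{-1}(L_j\cap L_1^\prime)$; only after identifying the relevant quotient with $L_1^\prime/(L_j\cap L_1^\prime)$ does duality convert it into an annihilator quotient, and dualizing the split sequence back gives (\ref{teo1})--(\ref{teo2}). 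If you insist on staying on the homology side, you must instead compute $\Imag(\partial_{2j})$ from formula (\ref{frontera2}), which requires knowing exactly which pairs $(a_1,b_1)$ occur as leading components of $(2j-1)$-cycles --- again a two-flag statement that your induction, as formulated, does not supply. Without either the dual-complex cocycle analysis or an equivalent direct computation, the inductive step cannot close, so the proposal as written does not prove the theorem.
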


 The invariants coming into the theorem are contained
 in the intersection of the flag $L^\prime_*$ with $L_1$
 and in the induced flag  $F_*$ on $\frac{\BB}{F_1^\prime}$.
 Or interchanging the roles of the 2 syzygies,
in the intersection of the flag $L_*$ with $L^\prime_1$
 and in the induced flag  $F_*^\prime$ on $\frac{\BB}{F_1}$.

These computations are useful for giving formulas for the
topological (GSV) and homological indices of a vector field
with isolated singularities tangent
to  a complete intersection with an isolated singularity (see
\cite{BEG,GSV,G}), where $\varphi$ in equation (\ref{E0}) is the Jacobian Matrix $Df$ of $f$ and
(\ref{E0}) is the tangency condition of the vector field along the complete intersection $f=0$.
In the isolated singularity case, the $\BB$ algebra is the $\KK$-algebra of functions on the zero locus $X=0$
of the vector field, which is an isolated but multiple point. So the algebra
we develop here is inside the zero set of the vector field.

\section{The small Gobelin}

All tensor products will be over $\KK$,
unless otherwise specified.
Let
$F$, $G$ and $H$ be finite dimensional $\KK$-vector spaces of dimensions $N$, $\ell$, and $k$ respectively.
Then the equation (\ref{E0})
gives rise to the anticommutative square
\begin{equation}
\begin{matrix}
\cO &
{\overset{-f}{\longrightarrow}}&
H \otimes\cO\cr
X \downarrow \ \ &&\downarrow
 c\cr
F \otimes \cO&
{\overset{\longrightarrow}{ \varphi}}
&
G \otimes \cO \cr
\end{matrix}
\label{E1}
\end{equation}
Let $\PP^{\ell - 1}$
denote the projective space, ${\rm Proj}(G),$ and
$\cO_{\PP^{\ell-1}}(1)$ the sheaf of hyperplane sections on $\PP^{\ell-1}$.
 Let $s_1,\ldots,s_\ell$ denote a basis of its global sections, $s:=(s_1,\ldots,s_\ell)$,
$\widetilde \cO :=\cO \otimes\cO_{\PP^{\ell-1}}$ and
$\widetilde \cO(m) :=\cO \otimes\cO_{\PP^{\ell-1}}(1)^{\otimes m}$.
We tensor the square (\ref{E1}) with the sheaf $\cO_{\PP^{\ell-1}}$
and continue at the right bottom of the square
with the tensor product of the natural morphism
$$s\cdot\  :G\otimes \cO_{\PP^{\ell-1}}
 \longrightarrow \cO_{\PP^{\ell-1}}(1)$$
with $\cO$ to obtain the following anticommutative square
of $\widetilde\cO$-sheaves on $\PP^{\ell-1}$:
$$
\begin{matrix}
\widetilde\cO &
{\overset{-f}{\longrightarrow}}&
H \otimes\widetilde\cO\cr
X \downarrow \ \ &&\downarrow
s \cdot c\cr
F \otimes \widetilde\cO&
{\overset{\longrightarrow}{s \cdot \varphi}}
&
\widetilde\cO(1) \cr
\end{matrix}
$$

Since going around the square gives a $1  \times 1$ matrix,
we can transpose the upper part of the square and
obtain the anticommutative square

\begin{equation}
\begin{matrix}
\widetilde\cO &
\xrightarrow{(s \cdot c)^t}&
H^* \otimes\widetilde\cO(1)\cr
X \downarrow \ \ &&\downarrow
-f^t\cr
F \otimes \widetilde\cO&
{\overset{\longrightarrow}{s \cdot \varphi}}
&
\widetilde\cO(1) \cr
\end{matrix}.
\label{(0.2)}
\end{equation}
It is explained in \cite{BEG} how from this data one constructs
a double complex, called the Gobelin.

Over  the ring $\BB :=\cO/(X_1,\ldots,X_N)$ the identity
(\ref{E0})   reduces to

\begin{equation}
cf = \begin{pmatrix}
c_{11} & \ldots & c_{1k}\cr
\vdots & \ddots & \vdots\cr
c_{\ell 1} & \ldots & c_{\ell k}\cr
\end{pmatrix}
\begin{pmatrix}f_1\cr
\vdots\cr
f_k\cr
\end{pmatrix}
= 0.
\label{IdentMatrix1}
\end{equation}

Using the notation
$\widetilde \BB_{\PP^{\ell-1}}:= \BB \otimes \cO_{\PP^{\ell-1}}$
and $\widetilde \BB_{{\PP^{\ell-1}}}(1):=
\BB \otimes \cO_{\PP^{\ell-1}}(1)$,
(\ref{(0.2)})  gives rise to a syzygy

\begin{equation*}
\widetilde\BB_{\PP^{\ell-1}}
\xrightarrow{ (s\cdot c)^t}
H^* \otimes\widetilde\BB_{\PP^{\ell-1}}(1)
\xrightarrow{-f^t}
\widetilde\BB_{\PP^{\ell-1}}(1).
\end{equation*}

Considering the Koszul complexes
formed with each term of the syzygy, tensoring with $\cO_{\PP^{\ell-1}}(d)$,
taking global sections and dualizing we weave these
Buchsbaum-Eisenbud
complexes (see \cite[Pag. 589]{E}) to
form a double complex,  called the
small Gobelin in \cite{BEG}, which we denote by
$$\cG_\BB^\ell:=\{\cG_{\BB,i,j}^\ell:=
D_iG^*\otimes\Lambda^{\ell+i-j}H\otimes\BB\}$$
where $D_iG^*:=H^0(\PP^{\ell-1},\cO_{\PP^{\ell-1}}(i))^*$ is the
homogeneous component of the divided power algebra
of $\KK[s_1,\ldots,s_\ell]$ of degree $i$,
and the connecting maps are constructed using
$f$ for the vertical strands and
$c$ for the horizontal ones.

\begin{figure}[h]
{\tiny
$$
\hskip -15mm
\begin{array}{*{18}{c@{\,}}}

&
&&
&&
&&
&&
& \downarrow &
& \downarrow &
\\
\\

&
&&
&&
&&
&0 & \longleftarrow
& D_4G^* \otimes \BB & \longleftarrow
&
D_5G^*\otimes H\otimes  \BB
\\ \\

&
&&
&&
&&
& \downarrow &
& \downarrow &
& \downarrow &

\\ \\

&&
&&&& 0 & \longleftarrow
&D_3G^* \otimes \BB & \longleftarrow
&
D_4G^*\otimes H\otimes  \BB
 & \longleftarrow
&
D_5G^*\otimes\Lambda^2H \otimes \BB
&

\\ \\

&
&&
&&
& \downarrow &
& \downarrow &
& \downarrow &
& \downarrow &

\\ \\

&&&&
 0 & \longleftarrow
&D_2G^*\otimes\BB & \longleftarrow
&
D_3G^*\otimes H  \otimes \BB
 & \longleftarrow
&
D_4G^*\otimes\Lambda^2H \otimes\BB
 & \longleftarrow
& 0

\\ \\

 &&&
& \downarrow &
& \downarrow &
& \downarrow &
& \downarrow &

\\ \\

&&0          & \longleftarrow
& G^* \otimes \BB & \longleftarrow
&
D_2G^*\otimes H \otimes \BB
 & \longleftarrow
&
D_3G^*\otimes\Lambda^2H \otimes \BB
 & \longleftarrow
& 0

\\ \\

&&\downarrow  &
& \downarrow &
& \downarrow &
& \downarrow &

\\ \\

0          & \longleftarrow &
 \BB & \longleftarrow
&
G^*\otimes  H\otimes \BB
 & \longleftarrow
&
D_2G^*\otimes\Lambda^2H \otimes \BB
 & \longleftarrow
&0

\\ \\

&& \downarrow &
& \downarrow &
& \downarrow &

\\ \\

0          & \longleftarrow &
 H\otimes \BB & \longleftarrow
&

G^*\otimes\Lambda^2H\otimes \BB

& \longleftarrow
& 0

\\ \\

&
& \downarrow &
& \downarrow &

\\ \\

0          & \longleftarrow &
 \Lambda^2H\otimes \BB & \longleftarrow
& 0

\\ \\

&
& \downarrow
\\ \\

&
& 0
\\ \\
\end{array}
$$}
\caption{The lower left hand part of the small Gobelin
$\cG_\BB^2$ for   $k=2$, $l=2$
beginning at $(0,0)$.} \label{Gobelin22}
\end{figure}

\section{Morphisms between small Gobelins}

Let $r< \ell$. We will modify the identity (\ref{IdentMatrix1}),
considering only the first $r$ rows, that is:

\begin{equation}\label{IdentMatrix2}
\begin{pmatrix}
c_{11} & \ldots & c_{1k}\cr
\vdots & \ddots & \vdots\cr
c_{r 1} & \ldots & c_{r k}\cr
\end{pmatrix}
\begin{pmatrix}f_1\cr
\vdots\cr
f_k\cr
\end{pmatrix}
= 0.
\end{equation}
Let $G_r \subset G$ be the vector subspace obtained by setting
$s_{r+1}=\ldots=s_r=0$ and denote by
   $\cG_\BB^{r}$
 the small Gobelin constructed from the
syzygy associated to the equation (\ref{IdentMatrix2})

\begin{equation*}
\widetilde{\BB}_{\PP^{r-1}}
\xrightarrow{ (s^\prime\cdot c)^t}
H^* \otimes\widetilde\BB_{\PP^{r-1}}(1)
\xrightarrow{-f^t}
\widetilde{\BB}_{\PP^{r-1}}(1).
\end{equation*}
with $\PP^{r-1} :=\hbox{Proj}(G_r) \subset \PP^{\ell-1}$ and $s^\prime:=(s_1,\ldots,s_r)$.
Let $D_iG^*_{r}  :=H^0(\PP^{r-1},\cO_{\PP^{r-1}}(i))^*$ be the
homogeneous component of the divided power algebra
of $\KK[s_1,\ldots,s_r]$ of degree $i$.

Consider the maps induced by multiplication by $s_r$:

$$
\sigma :H^0(\PP^{r-1},\cO_{\PP^{r-1}}(i))
\rightarrow
H^0(\PP^{r-1},\cO_{\PP^{r-1}}(i+1))
\hskip 1cm
\quad s^{\alpha }\rightarrow s^{\alpha} s_{r}.
$$

\noindent
Note that
 $$
H^0(\PP^{r-2},\cO_{\PP^{r-2}}(i+1))
=\frac{H^0(\PP^{r-1},\cO_{\PP^{r-1}}(i+1))
}{ \sigma (H^0(\PP^{r-1},\cO_{\PP^{r-1}}(i)))}
$$
These morphisms
induces by duality the morphisms

$$
\sigma^* : \cG^{r}_{\BB,i,j}=D_iG^*_r \otimes
\wedge^{r+i-j} H \otimes  \BB
\rightarrow
\cG^{r}_{\BB,i-1,j-1}=D_{i-1}G^*_r \otimes
\wedge^{r+i-j} H \otimes \BB
$$
\noindent
$$
a \otimes  \eta \otimes b  \rightarrow
\sigma ^* ( a ) \otimes \eta  \otimes b.
$$
\noindent which consists of contracting with the last variable $s_r$.
 Let $\cG^{r}_\BB (-1,-1)$ denote the shift of $\cG^{r}_\BB$ by $(-1,-1)$, i.e.
$$
\cG^{r}_{\BB} (-1,-1)_{i,j}=\cG^{r}_{\BB,i-1,j-1}.
$$
\noindent Hence $\sigma^*$ defines
 a morphism between the double complexes
$\cG^{r }_\BB$ and $\cG^{r }_\BB (-1,-1)$.
Note that in figure \ref{Gobelin22}, $\sigma^*$ corresponds to
 antidiagonal arrows descending vertically one and moving horizontally
to the left by one.\\
\\

Let $\iota:\cG^{r-1 }_\BB\rightarrow \cG^{r }_\BB$
be the inclusion defined in each term of the double complexes
by
$$
\iota : \cG^{r-1}_{\BB,i,j}=
D_{i}G^*_{r-1} \otimes \wedge^{r-1+i-j} H   \otimes\BB \rightarrow
\cG^{r }_{\BB,i,j+1}=
D_{i}G^*_r \otimes \wedge^{r+i-j-1} H \otimes \BB.
$$

Directly from above we obtain
\begin{Teo}\label{ExactaGobelinos}
The following is a short exact sequence of double complexes of $\BB$-modules
$$
0\FlechaDer{}\cG^{r-1}_\BB\FlechaDer{\iota}
 \cG^{r}_\BB\FlechaDer{\sigma^*}\cG^{r}_\BB (-1,-1)\FlechaDer{} 0.
$$
\noindent
giving rise to a long exact sequence of hyperhomology groups

$$
\ldots \FlechaDer{} \HH_j(\cG^{r-1}_\BB )\FlechaDer{\iota }
\HH_j(\cG^{r}_\BB )\FlechaDer{\sigma^*}
\HH_{j-2}(\cG^{r}_\BB )\FlechaDer{\delta}
\HH_{j-1}(\cG^{r-1}_\BB ) \FlechaDer{} \dots.
$$
\end{Teo}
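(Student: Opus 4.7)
The plan is to check the stated sequence bidegree by bidegree, verify that $\iota$ and $\sigma^*$ commute with both differentials of the small Gobelin, pass to total complexes, and invoke the standard homological algebra of short exact sequences of complexes of $\BB$-modules to produce the long exact sequence in hyperhomology.

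For the termwise exactness, note that $\cG^{r}_{\BB,i,j} = D_iG^*_r \otimes \Lambda^{r+i-j}H \otimes \BB$ and that only the divided-power factor $D_iG^*_r$ varies among the three Gobelins. The multiplication-by-$s_r$ short exact sequence of $\KK$-vector spaces
$$0 \FlechaDer{} H^0(\PP^{r-1},\cO_{\PP^{r-1}}(i-1)) \FlechaDer{\,\cdot s_r\,} H^0(\PP^{r-1},\cO_{\PP^{r-1}}(i)) \FlechaDer{} H^0(\PP^{r-2},\cO_{\PP^{r-2}}(i)) \FlechaDer{} 0$$
dualizes to $0 \FlechaDer{} D_iG^*_{r-1} \FlechaDer{\iota} D_iG^*_r \FlechaDer{\sigma^*} D_{i-1}G^*_r \FlechaDer{} 0$. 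Tensoring over $\KK$ with the free $\BB$-module $\Lambda^{r+i-j}H \otimes \BB$ preserves exactness; once one matches the bidegrees via the shift conventions built into the definitions of $\iota$ (which raises $j$ by one) and $\sigma^*$ (which drops both $i$ and $j$ by one), this delivers the termwise short exact sequence of the three double complexes.

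Next I would verify compatibility with both differentials. The vertical (Koszul) differential, built from $f$ acting on $\Lambda^*H \otimes \BB$, leaves $D_*G^*_r$ inert and therefore commutes with $\iota$ and $\sigma^*$ trivially. The horizontal (Buchsbaum--Eisenbud) differential acts on the divided-power factor as total contraction weighted by the entries $c_{ij}$. The inclusion $\iota$ identifies $D_aG^*_{r-1}$ with the kernel of contraction by $s_r$ inside $D_aG^*_r$, and this subspace is stable under the contractions by $s_1,\ldots,s_{r-1}$; thus $\iota$ intertwines the horizontal differentials. The map $\sigma^*$ is itself contraction by $s_r$, which commutes with every other contraction (divided-power contractions commute pairwise), so it is a chain map to the $(-1,-1)$-shift. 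A more conceptual route, which largely avoids the bookkeeping, is to observe that the three Gobelins are the output of the construction of \cite{BEG} applied to the single sheaf-level short exact sequence $0 \to \cO_{\PP^{r-1}}(-1) \xrightarrow{s_r} \cO_{\PP^{r-1}} \to \cO_{\PP^{r-2}} \to 0$ on $\PP^{r-1}$, and functoriality of the construction then produces the short exact sequence of double complexes at once.

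Finally, once the short exact sequence of double complexes is established, passing to total complexes yields a short exact sequence of complexes of $\BB$-modules; since the hyperhomology of a bounded-below double complex is by definition the homology of its total complex, the standard long exact sequence in homology delivers the stated sequence, with the $(-1,-1)$ shift accounting for the drop from $\HH_j$ to $\HH_{j-2}$ in total degree. The main obstacle is the compatibility step in the preceding paragraph: the explicit Koszul/Buchsbaum--Eisenbud weaving requires careful verification that $\iota$ and $\sigma^*$ are genuine chain maps for the full Gobelin differentials, and the sheaf-theoretic viewpoint is the most economical way to dispatch this.
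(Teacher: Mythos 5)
Your proposal is correct and follows essentially the same route as the paper: the termwise short exact sequence comes from the multiplication-by-$s_r$ sequence on $H^0(\PP^{r-1},\cO_{\PP^{r-1}}(i))$ dualized to the divided-power factors, $\iota$ and $\sigma^*$ are compatible with the Gobelin differentials by construction, and totalizing gives the long exact hyperhomology sequence with the $(-1,-1)$ shift producing the degree drop by $2$. The paper states the theorem ``directly from above'' with no written proof, so your chain-map verification simply makes explicit what the paper leaves implicit.
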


This long exact sequence
relates the hyperhomology groups  $\HH_j(\cG^{r}_\BB )$
of the small Gobelin $\cG^r_\BB$ via
$\sigma^*$ with
$\HH_{j-2} (\cG^{r}_\BB )$
and we find the kernels
and cokernels in the homology in the
small Gobelin $\cG^{r-1}_\BB$ with $(r-1)$-syzygies between $f_1,\ldots, f_k$.
In particular
when $r=1$, the kernels and cokernels will be $0$,
giving the isomorphism
between even and odd homology groups for positive degrees, as in \cite{G}
for the hypersurface case.

\section{Some auxiliary algebra}
\label{Sec3}

To develop the pattern that follows from Theorem \ref{ExactaGobelinos},
we will analyse the case where $k=2$ and $\ell=1,2$.
We will assume from now on further that $\cO$ is a local
Gorenstein $\KK$-algebra of dimension $N$, with $\KK$ as residue field
and that $X_1,\ldots,X_N$ is a regular sequence.
Hence we have that $\BB$
is a local Gorenstein $\KK$-algebra of dimension $0$
which is a finite dimensional vector space over $\KK$,
say of dimension $\mu$. Multiplication in $\BB$ will be denote by $a\cdot b$, or simply $ab$.

\subsection{The ideals associated to $f_1$ and $f_2$.}

Let $f_1,f_2 \in\BB$ be non-units. Denote by $\nu_1$, $\nu_2$ and
$\nu$ the codimension of the ideals $(f_1), (f_2) $ and $(f_1,f_2)$ in $\BB$.
It follows from the short exact sequence

$$0 \longrightarrow (f_1) \cap (f_2)
 {\stackrel{(id,-id)}\rightarrow} (f_1) \oplus (f_2)
{\stackrel{+}\rightarrow}(f_1,f_2)\longrightarrow 0
$$
that the codimension of $(f_1) \cap (f_2) $ is
$  \nu_1+\nu_2- \nu $ and from the non-degenerate duality
that $\Ann_\BB(f_1) \cap \Ann_\BB(f_2) = \Ann_\BB(f_1,f_2)$
has dimension $\nu$, $\Ann_\BB(f_i)$  has dimension $\nu_i$ and
$\Ann_\BB((f_1)\cap(f_2))$ has  dimension $\nu_1+\nu_2-\nu$:

$$
\begin{matrix}
 &\BB &  \cr
 & \cup&  \cr
 &(f_1,f_2)  & \cr
\cup & &\cup  \cr
(f_1)  & & (f_2) \cr
\cup & &\cup  \cr
 &(f_1) \cap (f_2)  & \cr
 & \cup&  \cr
 &0 &  \cr
\end{matrix}
\hskip 2cm
\begin{matrix}
 &\BB &  \cr
 & \cup&  \cr
 \Ann_\BB((f_1)\cap(f_2))& =& < \Ann_\BB(f_1),\Ann_\BB(f_2)>  \cr
\cup & &\cup  \cr
\Ann_\BB(f_1)  & & \Ann_\BB(f_2) \cr
\cup & &\cup  \cr
 \Ann_\BB(f_1,f_2)& =& \Ann_\BB(f_1) \cap \Ann_\BB(f_2)   \cr
 & \cup&  \cr
 &0 &  \cr
\end{matrix}
$$

From the exact sequence
$$0 \longrightarrow \Ann_\BB(f_1)
 \rightarrow \BB
{\stackrel{f_1}\rightarrow}(f_1)\longrightarrow 0
$$
we deduce the sequence
$$0 \longrightarrow \Ann_\BB(f_1)
 \rightarrow (f_2:f_1)
{\stackrel{f_1}\rightarrow}(f_1) \cap (f_2)\longrightarrow 0
$$
so that the dimension of $(f_2:f_1)$ is $\mu-\nu_2+\nu$,
and similarly the dimension of $(f_1:f_2)$ is $\mu-\nu_1+\nu$.
The modules
$$\frac{(f_2:f_1)}{(f_2)}, \hskip 1cm\hskip1cm \frac{(f_1:f_2)}{(f_1)}$$
have dimension $\nu$. From the exact sequence
$$0 \longrightarrow \Ann_\BB(f_1) \cap \Ann_\BB(f_2)
 \rightarrow \Ann_\BB(f_2)
{\stackrel{f_1}\rightarrow}  f_1\Ann_\BB(f_2)\longrightarrow 0
$$
we see that the dimension of $f_1\Ann_\BB(f_2))$ is $\nu_2-\nu$ and similarly
the dimension of $f_2\Ann_\BB(f_1))$ is $\nu_1-\nu$.
Hence the modules
$$\frac{\Ann_\BB(f_2)}{f_1\Ann_\BB(f_2)}, \hskip 1cm\hskip1cm
\frac{\Ann_\BB(f_1)}{f_2\Ann_\BB(f_1)}
$$
have dimension $\nu$.

\subsection{Non-degenerate bilinear forms}

Let $L:\BB \longrightarrow \KK$ be a trace map, i.e. a
functional which is non trivial on the socle of $ \BB$
(see \cite{BH,HK}).
The  bilinear form on $\BB$

\begin{equation}\label{d1}
\cdot_{L}: \BB \oplus \BB {\stackrel{\cdot}\rightarrow} \BB
{\stackrel{L }\rightarrow} \KK,\hskip 2cm a \cdot_L b = L(a \cdot b),
\end{equation}
is non-degenerate, having the property that if $I$ is an ideal in $\BB$
then its orthogonal $I^\perp$
is $\Ann_\BB(I)$, which is independent of the chosen
trace $L$.

Introduce  on $\BB^r$ the direct sum
non-degenerate symmetric bilinear forms:

\begin{equation}
 \label{d2a}
\cdot:\BB^r \oplus \BB^r \longrightarrow \BB, \hskip 1cm
 \begin{pmatrix} a_1 \cr \vdots \cr  a_r   \end{pmatrix}
\cdot \begin{pmatrix} b_1 \cr \vdots\cr  b_r   \end{pmatrix} :=
\begin{pmatrix} a_1 \cr \vdots\cr  a_r   \end{pmatrix}^t
\begin{pmatrix} b_1 \cr \vdots\cr  b_r   \end{pmatrix}=
\sum_{i=1}^r a_ib_i,
 \end{equation}

 \begin{equation}
 \label{d2}
\cdot_L:\BB^r \oplus \BB^r \longrightarrow \KK, \hskip 1cm
\begin{pmatrix} a_1 \cr \vdots \cr  a_r   \end{pmatrix}
\cdot_L \begin{pmatrix} b_1 \cr \vdots\cr  b_r   \end{pmatrix} :=
L(\begin{pmatrix} a_1 \cr \vdots\cr  a_r   \end{pmatrix}^t
\begin{pmatrix} b_1 \cr \vdots\cr  b_r   \end{pmatrix})=
L\left(\sum_{i=1}^r a_ib_i\right),
 \end{equation}

Given a submodule $M \subset \BB^r$, we will denote by
$M^\perp \subset \BB^r$ its orthogonal. It has complementary dimension, due to the
non-degeneracy of the bilinear form (\ref{d2}), and it is also a submodule. If $m_1,\ldots,m_s$
are generators of a module $M$ we will denote by $<m_1,\ldots,m_r>$ the module generated by them
and by $<m_1,\ldots,m_r>^\perp$ its orthogonal. If $M_1,M_2$ are submodules of
$\BB^r$, then $M_1^\perp \cap M_2^\perp = <M_1 \cup M_2>^\perp$.

Note that the involution
\begin{equation}
\kappa:\BB^2 \longrightarrow \BB^2,
\hskip 1cm\hskip 1cm \kappa(a,b) := (-b,a),
\label{kappa}
\end{equation}
is an $\cdot_L$-automorphism:
$$ \kappa
\begin{pmatrix} a  \cr  b   \end{pmatrix}
\cdot_L \kappa\begin{pmatrix} c \cr  d   \end{pmatrix}
  =
\begin{pmatrix} -b \cr  a   \end{pmatrix}
\cdot_L \begin{pmatrix} -d \cr   c  \end{pmatrix}
=L(bd+ac)=
\begin{pmatrix} a \cr  b   \end{pmatrix}
\cdot_L \begin{pmatrix} c \cr   d   \end{pmatrix}
  .$$
\vskip 3mm
\subsection{The Koszul complexes  $\cK_\BB(f_1,f_2)$ and $\cK_\BB(f_1,f_2)^*$} \label{3.3}

Denote by $\cK_\BB(f_1,f_2)$   the Koszul complex built
with $f_1,f_2$ over $\BB$:

\begin{equation}\label{d3}
0 \longleftarrow \BB {\stackrel{ ( f_1, f_2)}\longleftarrow}
 \BB^2 {\stackrel{\begin{pmatrix} -f_2 \cr f_1 \cr \end{pmatrix}}\longleftarrow}
 \BB \longleftarrow 0
\end{equation}
with homology groups
\begin{equation} \label{Koszul}
H_0(\cK_\BB(f_1,f_2)) = \frac{\BB}{(f_1,f_2)}, 
H_1(\cK_\BB(f_1,f_2)) = \frac{<\begin{pmatrix} f_1 \cr f_2 \cr \end{pmatrix}>^\perp}
{<\begin{pmatrix} -f_2 \cr f_1 \cr \end{pmatrix}>},
\hbox{ and } 
H_2(\cK_\BB(f_1,f_2))= \Ann_\BB(f_1,f_2).
\end{equation}
The dimension of $\frac{\BB}{(f_1,f_2)}$ is $\nu$,
  $H_2(\cK_\BB(f_1,f_2))$ also has dimension $\nu$
since it is orthogonal to $(f_1,f_2)$ and
$H_1(\cK_\BB(f_1,f_2))$ has dimension $2\nu$,
since the alternating sum of the dimensions of the modules in (\ref{d3}) is 0,
and hence also the Euler characteristic of its homology groups.

From the exact sequence
$$0 \longrightarrow \Ann_\BB(f_1) \cap \Ann_\BB(f_2)
 \rightarrow \BB
{\stackrel{
\begin{pmatrix} f_1 \cr f_2 \cr \end{pmatrix}
}\rightarrow}  <\begin{pmatrix} f_1 \cr f_2 \cr \end{pmatrix}> \longrightarrow 0
$$
we also see that the dimension of
$<\begin{pmatrix} f_1 \cr f_2 \cr \end{pmatrix}>$
 is $\mu-\nu$,
(similarly of $<\begin{pmatrix} -f_2 \cr f_1 \cr \end{pmatrix}> $) and
by orthogonality in $\BB^2$, the dimension of
$<\begin{pmatrix} f_1 \cr f_2 \cr \end{pmatrix}>^\perp$
and of $<\begin{pmatrix} -f_2 \cr f_1 \cr \end{pmatrix}>^\perp $ is $\mu+\nu$.
This gives another proof that
 $H_1(\cK_\BB(f_1,f_2)) $ has dimension $2\nu$.

\begin{lema} \label{lema1}
The inclusions $i$ and projections $\pi$ of $\BB^2$ to the factors
induces the exact sequences:

$$0 \longrightarrow \frac{\Ann_\BB(f_2)}{f_1 \Ann_\BB(f_2)}
 {\stackrel{
i_2
}\rightarrow}   H_1(\cK_\BB(f_1,f_2))
{\stackrel{\pi_1}
\rightarrow}  \frac{(f_2:f_1)}{(f_2)} \longrightarrow 0
$$

$$0 \longrightarrow \frac{\Ann_\BB(f_1)}{f_2 \Ann_\BB(f_1)}
 {\stackrel{
i_1
}\rightarrow}   H_1(\cK_\BB(f_1,f_2))
{\stackrel{\pi_2}
\rightarrow}  \frac{(f_1:f_2)}{(f_1)} \longrightarrow 0
$$
\end{lema}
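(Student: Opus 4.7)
The plan is to use the explicit description of $H_1(\cK_\BB(f_1,f_2))$ as cycles modulo boundaries: pairs $(a,b)\in\BB^2$ with $af_1+bf_2=0$, modulo the $\BB$-submodule generated by $(-f_2,f_1)$. Both exact sequences follow by tracing the projection and inclusion maps and checking well-definedness and exactness; the second is obtained from the first by interchanging $f_1\leftrightarrow f_2$ (equivalently, by applying the involution $\kappa$ of (\ref{kappa})), so I will focus on the first.

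For the projection $\pi_1$, I would send the class of $(a,b)$ to $a\bmod(f_2)$. Any cycle satisfies $af_1=-bf_2\in(f_2)$, placing $a$ in $(f_2:f_1)$, and any boundary $(-f_2c,f_1c)$ alters $a$ by an element of $(f_2)$, so $\pi_1$ is well-defined. Surjectivity is transparent: given $a\in(f_2:f_1)$, choose $b$ with $af_1=-bf_2$ and observe that $(a,b)$ is then a cycle mapping to $a$. For the inclusion $i_2$, I would send $d\in\Ann_\BB(f_2)$ to the class of $(0,d)$, which is a cycle since $df_2=0$. When $d'\in\Ann_\BB(f_2)$, the boundary of $d'$ equals $(-f_2d',f_1d')=(0,f_1d')$, so the map descends to the quotient by $f_1\Ann_\BB(f_2)$. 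The composition $\pi_1\circ i_2$ vanishes by construction.

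The key step is the identification $\ker\pi_1=\Imag(i_2)$. For a cycle $(a,b)$ with $a\in(f_2)$, I would write $a=f_2c$ and subtract the boundary of $-c$, namely $(f_2c,-f_1c)$, obtaining $(0,b+f_1c)$. The cycle condition then forces $(b+f_1c)f_2=0$, so $b+f_1c\in\Ann_\BB(f_2)$ and the class of $(a,b)$ coincides with $i_2(b+f_1c)$. The residual ambiguity in $c$ is precisely $\Ann_\BB(f_2)$, which shifts $b+f_1c$ by $f_1\Ann_\BB(f_2)$—exactly the relation imposed at the source of $i_2$. Injectivity of $i_2$ is the same computation read in reverse: if $(0,d)=(-f_2c,f_1c)$, then $c\in\Ann_\BB(f_2)$ and $d=f_1c\in f_1\Ann_\BB(f_2)$.

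The only real obstacle is bookkeeping—choosing the correct sign of the boundary so that both coordinates come out as intended, and confirming that the ambiguity in the auxiliary element $c$ is absorbed exactly by quotienting by $f_1\Ann_\BB(f_2)$, neither more nor less. No further machinery beyond the Koszul differentials themselves is required, and the second sequence is a formal consequence of the symmetry $f_1\leftrightarrow f_2$.
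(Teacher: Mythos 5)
Your proof is correct and takes essentially the same approach as the paper's: reduce a cycle in the kernel of the projection to a representative supported in one coordinate, identify it with a class in $\Ann_\BB(f_i)/f_j\Ann_\BB(f_i)$ (checking that the ambiguity in the auxiliary element is absorbed exactly by $f_j\Ann_\BB(f_i)$), and get surjectivity from the colon-ideal description. The only difference is cosmetic: you prove the first sequence and obtain the second by the $f_1\leftrightarrow f_2$ symmetry, while the paper proves the second directly and leaves the first to the same symmetry.
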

\begin{proof}
We prove the second one. The projection $\pi_2$ induces a map
$ H_1(\cK_\BB(f_1,f_2))
{\stackrel{\pi_2}
\rightarrow}  \frac{\BB}{(f_1)}$
due to the boundaries $c\begin{pmatrix} -f_2 \cr f_1 \end{pmatrix}$.
 Now $\begin{pmatrix} a \cr b \end{pmatrix} \in
H_1(\cK_\BB(f_1,f_2))$ means $af_1+bf_2=0$,
hence $bf_2=-af_1$, so $b\in (f_1:f_2)$.
And conversely, given $b\in (f_1:f_2)$ there is an $a\in \BB$
with $bf_2=-af_1$, and we may form the class $(a,b)^t  \in
H_1(\cK_\BB(f_1,f_2))$.

An element in $\Ker(\pi_2)
\subset H_1(\cK_\BB(f_1,f_2))$ may be written as
 $\begin{pmatrix} a \cr 0 \end{pmatrix}
+b\begin{pmatrix} -f_2 \cr f_1 \end{pmatrix}
\in \begin{pmatrix} f_1 \cr f_2 \end{pmatrix}^\perp$
that is, $af_1=0$, and hence $a \in \Ann_\BB(f_1)$.
The element $\begin{pmatrix} a \cr 0 \end{pmatrix}$
is $0\in
H_1(\cK_\BB(f_1,f_2))$ if $\begin{pmatrix} a \cr 0 \end{pmatrix}=c
\begin{pmatrix} -f_2 \cr f_1 \end{pmatrix}$, hence
$a=-cf_2$ with $cf_1=0$; that is, $a\in f_2\Ann_\BB(f_1)$.
\end{proof}

  The algorithm to construct an element in
$H_1(\cK_\BB(f_1,f_2))$ is then to begin with an
element $b \in (f_1:f_2) \subset \BB$, so
it satisfies a relation $bf_2=-af_1$ for some $a\in \BB$,
and then form $\begin{pmatrix} a \cr b \end{pmatrix}
\in H_1(\cK_\BB(f_1,f_2))$. The $a$ is unique up to adding
an element in $\Ann_\BB(f_1)$. If $b=cf_1$ then
$\begin{pmatrix} a \cr b \end{pmatrix}=c
\begin{pmatrix} -f_2 \cr f_1 \end{pmatrix}$ represents
the $0$ element.

The dual Koszul complex $\cK_\BB(f_1,f_2)^*$ is

$$
0 \longrightarrow \BB {\stackrel{
\begin{pmatrix} f_1 \cr f_2 \cr \end{pmatrix}
  }\longrightarrow}
 \BB^2 {\stackrel{( -f_2, f_1)}\longrightarrow}
 \BB \longrightarrow 0
$$
with cohomology groups
$$
H^0(\cK_\BB(f_1,f_2)^*) = \Ann_{\BB}{(f_1,f_2)}, \
H^1(\cK_\BB(f_1,f_2)^*) = \frac{<\begin{pmatrix} -f_2 \cr f_1 \cr \end{pmatrix}>^\perp}
{<\begin{pmatrix} f_1 \cr f_2 \cr \end{pmatrix}>}, \hbox{ and }
H^2(\cK_\BB(f_1,f_2)^*)= \frac{\BB}{(f_1,f_2)}.
$$

The bilinear form $\cdot_L$ (\ref{d2a}) in $\BB^2$ induces  non-degenerate
bilinear forms in $H_1(\cK_\BB(f_1,f_2))$ and in  $H^1(\cK_\BB(f_1,f_2)^*)$, and the $\BB$-isomorphism
$\kappa$ in (\ref{kappa}) induces a $\BB$-module isomorphism
$$\kappa^*: H_1(\cK_\BB(f_1,f_2)) \longrightarrow
H^1(\cK_\BB(f_1,f_2)^*)$$
which preserves the bilinear forms.

\section{The hyperhomology of the Gobelin $\cG^1_\BB$.}

\subsection{The hyperhomology of $\cG^1_\BB$.}

The small Gobelin $\cG_\BB^{1}$ constructed from
\\
\begin{equation}
\begin{pmatrix}
c_{11} &  c_{12}\cr
\end{pmatrix}
\begin{pmatrix}f_1\cr
f_2\cr
\end{pmatrix}
=
0
\label{BMatrixIdentity}
\end{equation}
is given by Figure 1 with $G=G_1$.
The total complex $\cG^{1}_\BB$ has the form
\begin{equation}
 0  \FlechaIzq{} \BB  \FlechaIzq{(f_{1},f_{2})}\BB^2   \FlechaIzq{C_{\psi }}\BB^2   \FlechaIzq{C_{\varphi }}\BB^2   \FlechaIzq{C_{\psi }}
\BB^2   \FlechaIzq{C_{\varphi }}\cdots
\label{HyperHomologySequenceG1}
\end{equation}
\noindent where

$$
C_{\psi }=
\begin{pmatrix}
-f_{2} & c_{11}\cr
f_{1}  & c_{12}\cr
\end{pmatrix}
,\quad
C_{\varphi}=
\begin{pmatrix}
-c_{12} & c_{11}\cr
f_{1}  & f_{2}\cr
\end{pmatrix}.
$$

We have $\begin{pmatrix} c_{11} \cr c_{12} \cr \end{pmatrix} \in
<\begin{pmatrix} f_1 \cr f_2 \cr \end{pmatrix}>^\perp$, i.e.
it satisfies a relation in $\BB$ of the form $c_{11}f_1+c_{12}f_2=0$.

\begin{proof}[Proof of Proposition \ref{lema2}]
The computation of $\HH_0(\cG^1_\BB)$ is direct from the complex
(\ref{HyperHomologySequenceG1}), as also
$$\HH_1(\cG^1_\BB)=
\frac{<\begin{pmatrix}  f_{1} \cr f_{2} \cr \end{pmatrix}>^\perp}{
<\begin{pmatrix}  -f_{2} \cr f_{1} \cr \end{pmatrix},
\begin{pmatrix}  c_{11} \cr c_{12} \cr \end{pmatrix}>}
=\frac
{H_1(\cK_\BB(f_1,f_2))}
{<\begin{pmatrix}  c_{11} \cr c_{12} \cr \end{pmatrix}>}
$$

The 2-periodicity of $\HH_j(\cG^1_\BB)$ follows from the
2-periodicity of
(\ref{HyperHomologySequenceG1}).
One computes directly
$$\frac{\Ker(C_\psi)}{\Imag(C_\varphi)}, \hskip 1cm\hbox{and}\hskip 1cm
\frac{\Ker(C_\varphi)}{\Imag(C_\psi)}$$
to obtain (\ref{H2G1}).

We also have for $i,j\geq 1$ \\
\\
$\dim (\HH_{2j}(\cG^{1}_\BB))=\dim (\Ker( C_\psi ))-\dim (\Imag(C_\varphi)) =\dim (\BB^2) -\dim (\Imag(C_\psi))-\dim (\Imag(C_\varphi))$\\
\- $ $ \- $ $ \- $ $ \- $ $ \- $ $ \- $ $ \- $ $ \- $ $ \- $ $ \- $ $
$=\dim (\Ker( C_\varphi ))-\dim (\Imag(C_\psi))= \dim (\HH_{2i+1}(\cG^{1}_\BB))$.\\
\\
Thus for $j\geq 2$   all the homology groups $\HH_{j}(\cG^{1}_\BB)$ are  equidimensional
as $\KK$-vector spaces.\\
\end{proof}

\begin{lema} \label{lema4homology}
For $j\geq1$
the projection map to the second factor
\begin{equation}\label{pi1homologia}
\pi_2: \HH_{2j}(\cG^{1}_\BB) \longrightarrow \frac{\BB}{(f_1,f_2)},
\hskip 1cm \pi_2 (a,b) = b,
\end{equation}
induce an exact sequence:

\begin{equation} \label{H2jhomologia}
0 \longrightarrow \frac{\Ann_\BB(f_1,f_2)}
{\begin{pmatrix} -c_{12} \cr c_{11} \cr \end{pmatrix}
\cdot
\frac{H_1(\cK_\BB(f_1,f_2))}{
<\begin{pmatrix} c_{11} \cr c_{12} \cr \end{pmatrix}>}}
\FlechaDer{i_1}
\HH_{2j}(\cG^{1}_\BB)
\FlechaDer{\pi_2}
\frac{(0:
\begin{pmatrix} c_{11} \cr c_{12} \cr \end{pmatrix})_{H_1(\cK_\BB(f_1,f_2))}}{(f_1,f_2)}
\longrightarrow
0
\end{equation}
\end{lema}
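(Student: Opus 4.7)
The approach is to compute directly with the total complex (\ref{HyperHomologySequenceG1}), using the descriptions $\Ker(C_\psi)=\langle\begin{pmatrix}-f_2\\c_{11}\end{pmatrix},\begin{pmatrix}f_1\\c_{12}\end{pmatrix}\rangle^{\perp}$ and $\Imag(C_\varphi)=\langle\begin{pmatrix}-c_{12}\\f_1\end{pmatrix},\begin{pmatrix}c_{11}\\f_2\end{pmatrix}\rangle$ supplied by Proposition \ref{lema2}; the $2$-periodicity of the complex lets me treat all $j\geq 1$ at once. Well-definedness of $\pi_2$ is immediate since every column of $C_\varphi$ has second entry in $(f_1,f_2)$.

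For the image of $\pi_2$, I would take a cycle $(a,b)\in\Ker(C_\psi)$ and rewrite the two kernel conditions $-f_2a+c_{11}b=0$ and $f_1a+c_{12}b=0$ as the single identity $b\begin{pmatrix}c_{11}\\c_{12}\end{pmatrix}=-a\begin{pmatrix}-f_2\\f_1\end{pmatrix}$ in $\BB^2$. This says exactly that $b$ annihilates the Koszul class $\begin{pmatrix}c_{11}\\c_{12}\end{pmatrix}$ in $H_1(\cK_\BB(f_1,f_2))$. Running the same formula backwards shows that every such $b$ lifts to a cycle $(-a,b)$, giving surjectivity of $\pi_2$ onto $\frac{(0:(c_{11},c_{12})^t)_{H_1}}{(f_1,f_2)}$; the inclusion $(f_1,f_2)\subset (0:(c_{11},c_{12})^t)_{H_1}$ follows immediately from the syzygy $c_{11}f_1+c_{12}f_2=0$, so the quotient is well-formed.

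For the kernel, if $b=\alpha f_1+\beta f_2$, I would subtract $C_\varphi(\alpha,\beta)^t=(\beta c_{11}-\alpha c_{12},\alpha f_1+\beta f_2)^t$ to reduce the representative to the form $(a',0)\in\Ker(C_\psi)$; the kernel conditions then force $f_1a'=f_2a'=0$, i.e. $a'\in\Ann_\BB(f_1)\cap\Ann_\BB(f_2)=\Ann_\BB(f_1,f_2)$. Thus $\Ker(\pi_2)$ is the image of the map $i_1:\Ann_\BB(f_1,f_2)\to\HH_{2j}(\cG^1_\BB)$, $a'\mapsto[(a',0)]$. To identify $\Ker(i_1)$ I would characterize when $(a',0)\in\Imag(C_\varphi)$: this requires $(\gamma,\delta)^t$ with $\gamma f_1+\delta f_2=0$ and $a'=\delta c_{11}-\gamma c_{12}=\begin{pmatrix}-c_{12}\\c_{11}\end{pmatrix}\cdot\begin{pmatrix}\gamma\\\delta\end{pmatrix}$. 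Since both the boundary $(-f_2,f_1)^t$ and the cycle $(c_{11},c_{12})^t$ pair to $0$ against $\begin{pmatrix}-c_{12}\\c_{11}\end{pmatrix}$ (the latter via the syzygy), this pairing descends to $H_1(\cK_\BB(f_1,f_2))/\langle(c_{11},c_{12})^t\rangle$, producing exactly the denominator in the lemma. The computation is essentially linear bookkeeping; the only subtle point is verifying this factorization of the dot product, which is where the syzygy $c_{11}f_1+c_{12}f_2=0$ enters twice.
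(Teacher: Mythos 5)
Your proposal is correct and follows essentially the same route as the paper's own proof: rewrite the cycle condition as $a(-f_2,f_1)^t+b(c_{11},c_{12})^t=0$ to get well-definedness and surjectivity of $\pi_2$ onto the stated subquotient, reduce kernel elements to representatives $(a',0)$ with $a'\in\Ann_\BB(f_1,f_2)$, and identify $\Ker(i_1)$ as the image of the pairing with $(-c_{12},c_{11})^t$ on $H_1(\cK_\BB(f_1,f_2))/\langle(c_{11},c_{12})^t\rangle$. One trivial slip in attribution: the pairing of the cycle $(c_{11},c_{12})^t$ against $(-c_{12},c_{11})^t$ vanishes identically, while it is the boundary $(-f_2,f_1)^t$ whose vanishing against $(-c_{12},c_{11})^t$ uses the syzygy $c_{11}f_1+c_{12}f_2=0$.
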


\begin{proof}
$\begin{pmatrix} a \cr b \cr \end{pmatrix} \in \HH_{2j}(\cG^{1}_\BB)$ satisfies $-af_{2}+bc_{11}= 0$ and
$af_{1}+bc_{12}=0$, or in matrix notation
$$
a\begin{pmatrix} -f_{2} \cr f_{1} \cr \end{pmatrix}
+b\begin{pmatrix} c_{11} \cr c_{12} \cr \end{pmatrix}=0\ \ \
\hbox{so that} \ \ \  b \in (\begin{pmatrix} -f_{2} \cr f_{1} \cr \end{pmatrix}:
\begin{pmatrix}  c_{11} \cr c_{12} \cr \end{pmatrix}),$$ and if its cohomology class is 0
then
$\begin{pmatrix} a \cr b \cr \end{pmatrix} \in <\begin{pmatrix} -c_{12} \cr f_{1} \cr \end{pmatrix},
\begin{pmatrix} c_{11} \cr f_{2} \cr \end{pmatrix}>$ so that
$b \in (f_1,f_2)$. This shows that the map $\pi_2$ is well defined and surjective.

The elements   in the kernel of $\pi_2$  may be represented in the form $(a^\prime,\alpha_1f_1+\alpha_2f_2)^t$,
so that it has a homologous element of the form

$$\begin{pmatrix} a \cr 0 \cr \end{pmatrix}=
\begin{pmatrix}a^\prime \cr \alpha_1f_{1} + \alpha_2f_{2} \cr  \end{pmatrix}-\alpha_1
\begin{pmatrix} -c_{12} \cr f_{1} \cr \end{pmatrix}
-\alpha_2
\begin{pmatrix} c_{11} \cr f_{2} \cr \end{pmatrix}.$$
The element $(a,0)^t$ belongs to $\HH_{2j}(\cG^{1}_\BB)$ if and only if $a\in \Ann_\BB(f_1,f_2)$.
It is the $0$ element if and only if

$$
\begin{pmatrix} a \cr 0 \cr \end{pmatrix}=\alpha
\begin{pmatrix} -c_{12} \cr f_{1} \cr \end{pmatrix}+\beta
\begin{pmatrix} c_{11} \cr f_{2} \cr \end{pmatrix}=
\begin{pmatrix}
(\alpha,\beta)^t\cdot (-c_{12},c_{11})^t  \cr
(\alpha,\beta)^t\cdot (f_1,f_2)^t    \cr \end{pmatrix}
 $$

Hence $(\alpha,\beta)^t\in <(f_1,f_2)^t>^\perp$. Consider the map
$$\cdot\begin{pmatrix} -c_{12} \cr c_{11} \cr \end{pmatrix} :
<\begin{pmatrix} f_1 \cr f_2 \cr \end{pmatrix}>^\perp \longrightarrow \BB$$
which vanishes on
$
<\begin{pmatrix} -f_2 \cr f_1 \cr \end{pmatrix},
\begin{pmatrix}  c_{11} \cr c_{12} \cr \end{pmatrix}>$.
Its image lies in $\Ann_\BB(f_1,f_2)$ and consists of those
elements which represent  0 in $\Ker(i_1)$.
\end{proof}

\subsection{The hypercohomology of $\cG^{1*}_\BB$.}

 Let $\cG^{1*}_\BB$ be the double complex $\Hom_\BB(\cG^1_\BB,\BB)$.
The total complex $\tot(\cG^{1*}_\BB)$ is obtained by applying the functor
 $\Hom_\BB(*,\BB)$ to (\ref{HyperHomologySequenceG1}):

$$
 0  \FlechaDer{} \BB  \FlechaDer{(f_{1},f_{2})^t}\BB^2
 \FlechaDer{C_{\psi }^t}\BB^2   \FlechaDer{C_{\varphi }^t}\BB^2   \FlechaDer{C_{\psi }^t}
\BB^2   \FlechaDer{C_{\varphi }^t}\cdots
$$

\begin{lema} \label{lema3}
The hypercohomology groups of $\cG^{1*}_\BB$
are

$$
\HH^0(\cG^{1*}_\BB) = \Ann_{\BB}{(f_1,f_2)}, \hskip 1cm \hskip 1cm
\HH^1(\cG^{1*}_\BB) =
 {<\begin{pmatrix} c_{11} \cr c_{12} \cr \end{pmatrix}>^\perp_{H^1(\cK_\BB(f_1,f_2))} }
$$
and for $j\geq1$
$$
\HH^{2j}(\cG^{1*}_\BB) = \frac
{<\begin{pmatrix} -c_{12} \cr f_{1} \cr \end{pmatrix},
  \begin{pmatrix}  c_{11} \cr f_{2} \cr \end{pmatrix}>^\perp}
{<\begin{pmatrix} -f_{2} \cr c_{11} \cr \end{pmatrix},
\begin{pmatrix} f_{1} \cr c_{12} \cr \end{pmatrix}>},
\hskip 1cm \hskip 1cm
\HH^{2j+1}(\cG^{1*}_\BB) = \frac
{<\begin{pmatrix} c_{11} \cr c_{12} \cr \end{pmatrix}>^\perp_{H^1(\cK_\BB(f_1,f_2))} }
{<
\begin{pmatrix} -c_{12} \cr c_{11} \cr \end{pmatrix}>_{H^1(\cK_\BB(f_1,f_2))}}
$$
where these last 2 groups are equidimensional
as $\KK$-vector spaces.
\end{lema}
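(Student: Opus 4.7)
The plan is to mirror the argument used to prove Proposition~\ref{lema2}, since $\HH^*(\cG^{1*}_\BB)$ is the (co)homology of the complex obtained by transposing every differential in (\ref{HyperHomologySequenceG1}). Explicitly, $\tot(\cG^{1*}_\BB)$ is a cochain complex whose differentials in positive degree are alternately
$$
C_\psi^t = \begin{pmatrix} -f_2 & f_1\\ c_{11} & c_{12}\end{pmatrix},\qquad
C_\varphi^t = \begin{pmatrix} -c_{12} & f_1\\ c_{11} & f_2\end{pmatrix}.
$$
First I would compute $\HH^0(\cG^{1*}_\BB)$ directly as $\ker((f_1,f_2)^t)=\Ann_\BB(f_1,f_2)$. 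For $\HH^1$ I would identify $\ker(C_\psi^t)$ with the $\cdot$-orthogonal $\langle (-f_2,f_1)^t,(c_{11},c_{12})^t\rangle^\perp$, and then quotient by $\Img((f_1,f_2)^t)=\langle (f_1,f_2)^t\rangle$; by definition of $H^1(\cK_\BB(f_1,f_2)^*)$ the quotient is exactly $\langle (c_{11},c_{12})^t\rangle^\perp_{H^1(\cK_\BB(f_1,f_2)^*)}$.

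For $j\geq 1$, the complex is $2$-periodic, so only two computations remain. The key observation is that, because $L$ is a trace, the $\cdot_L$-orthogonal of a $\BB$-submodule $M\subset \BB^r$ coincides with the $\BB$-theoretic annihilator $\{a\in\BB^r : a\cdot m = 0 \in\BB \text{ for all } m\in M\}$. Applying this, $\ker(C_\varphi^t)=\langle (-c_{12},f_1)^t,(c_{11},f_2)^t\rangle^\perp$ and $\Img(C_\psi^t)=\langle (-f_2,c_{11})^t,(f_1,c_{12})^t\rangle$, yielding the claimed formula for $\HH^{2j}(\cG^{1*}_\BB)$. Symmetrically, $\ker(C_\psi^t)=\langle (-f_2,f_1)^t,(c_{11},c_{12})^t\rangle^\perp$ and $\Img(C_\varphi^t)=\langle (-c_{12},c_{11})^t,(f_1,f_2)^t\rangle$; rewriting the quotient inside $H^1(\cK_\BB(f_1,f_2)^*)=\langle (-f_2,f_1)^t\rangle^\perp/\langle (f_1,f_2)^t\rangle$ produces the formula for $\HH^{2j+1}(\cG^{1*}_\BB)$.

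Finally, for the equidimensionality statement, I would repeat the dimension count used at the end of the proof of Proposition~\ref{lema2}: since the two differentials map between copies of $\BB^2$,
$$
\dim\HH^{2j} = \dim\BB^2 - \dim\Img(C_\varphi^t) - \dim\Img(C_\psi^t) = \dim\HH^{2j+1}
$$
for all $j\geq 1$. A conceptually cleaner alternative is to invoke the non-degenerate pairing $\cdot_L$ (and the automorphism $\kappa$ of (\ref{kappa}) where needed) to set up a duality between $\tot(\cG^1_\BB)$ and $\tot(\cG^{1*}_\BB)$, which immediately matches $\HH^j(\cG^{1*}_\BB)$ with $\HH_j(\cG^1_\BB)$ as $\KK$-vector spaces; however, since the explicit description of the representatives is used later, I would prefer the direct kernel/image calculation above.

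The only subtle point, and the place where care is needed, is making sure the identification of $\cdot_L$-orthogonals in $\BB^2$ with the $\BB$-syzygy module is applied correctly when passing to quotients such as $H^1(\cK_\BB(f_1,f_2)^*)$; once that translation is fixed the rest of the proof is mechanical 2-periodic bookkeeping.
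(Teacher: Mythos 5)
Your proposal is correct and follows essentially the same route as the paper: the paper's own proof of Lemma \ref{lema3} is just ``similar to Proposition \ref{lema2}'', i.e.\ the direct, $2$-periodic kernel/image computation for the dualized total complex with differentials $(f_1,f_2)^t$, $C_\psi^t$, $C_\varphi^t$, together with the same dimension count $\dim\BB^2-\dim\Imag(C_\psi^t)-\dim\Imag(C_\varphi^t)$ for equidimensionality. Your transposed matrices, the identification of $\cdot_L$-orthogonals of $\BB$-submodules with annihilators, and the rewriting of the degree $2j+1$ quotient inside $H^1(\cK_\BB(f_1,f_2)^*)$ are all as intended.
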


\begin{proof} Similar as the proof of Proposition \ref{lema2}.
\end{proof}

\begin{lema} \label{lema4}
For $j\geq1$
the projection map to the first factor
\begin{equation}\label{pi1}
\pi_1: \HH^{2j}(\cG^{1*}_\BB) \longrightarrow \frac{\BB}{(f_1,f_2)},
\hskip 1cm \pi_1 (a,b) = a,
\end{equation}
induces an exact sequence:

\begin{equation} \label{H2j}
0 \longrightarrow \frac{\Ann_\BB(f_1,f_2)}
{\begin{pmatrix} c_{11} \cr c_{12} \cr \end{pmatrix}
\cdot
\frac{H^1(\cK_\BB(f_1,f_2)^*)}{
<\begin{pmatrix} -c_{12} \cr c_{11} \cr \end{pmatrix}>}}
\FlechaDer{i_2}
\HH^{2j}(\cG^{1*}_\BB)
\FlechaDer{\pi_1}
\frac{(0:
\begin{pmatrix} -c_{12} \cr c_{11} \cr \end{pmatrix})_{H^1(\cK_\BB(f_1,f_2)^*)}}{(f_1,f_2)}
\longrightarrow
0
\end{equation}
\end{lema}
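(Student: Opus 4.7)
The plan is to mimic the proof of Lemma \ref{lema4homology}, exchanging the roles of Koszul homology and cohomology and replacing the second projection $\pi_2$ by the first projection $\pi_1$. By Lemma \ref{lema3},
\[
\HH^{2j}(\cG^{1*}_\BB) = \Ker(C_\varphi^t)/\Imag(C_\psi^t) \qquad (j\geq 1),
\]
so I work throughout with explicit representatives $(a,b)^t \in \BB^2$.

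First I would unpack $\Ker(C_\varphi^t)$: the two scalar equations $-c_{12}a+f_1b=0$ and $c_{11}a+f_2b=0$ combine into $a\begin{pmatrix}-c_{12}\cr c_{11}\end{pmatrix}+b\begin{pmatrix}f_1\cr f_2\end{pmatrix}=0$. Since the syzygy $c_{11}f_1+c_{12}f_2=0$ places $\begin{pmatrix}-c_{12}\cr c_{11}\end{pmatrix}$ inside $\langle\begin{pmatrix}-f_2\cr f_1\end{pmatrix}\rangle^\perp$, this condition reads exactly as saying that $a\in\BB$ annihilates the class of $\begin{pmatrix}-c_{12}\cr c_{11}\end{pmatrix}$ in $H^1(\cK_\BB(f_1,f_2)^*)$. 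The boundary generators $\begin{pmatrix}-f_2\cr c_{11}\end{pmatrix}$ and $\begin{pmatrix}f_1\cr c_{12}\end{pmatrix}$ of $\Imag(C_\psi^t)$ both have first coordinate in $(f_1,f_2)$, so $\pi_1$ descends to a well-defined map into $\BB/(f_1,f_2)$ whose image is precisely $(0:\begin{pmatrix}-c_{12}\cr c_{11}\end{pmatrix})_{H^1(\cK_\BB(f_1,f_2)^*)}/(f_1,f_2)$.

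To analyse $\Ker(\pi_1)$, I would pick a representative $(a,b)^t$ with $a=\beta f_1-\alpha f_2\in(f_1,f_2)$ and subtract the boundary $\alpha\begin{pmatrix}-f_2\cr c_{11}\end{pmatrix}+\beta\begin{pmatrix}f_1\cr c_{12}\end{pmatrix}$ to obtain the homologous representative $(0,b')^t$ with $b'=b-\alpha c_{11}-\beta c_{12}$. The condition $(0,b')^t\in\Ker(C_\varphi^t)$ forces $b'\in\Ann_\BB(f_1,f_2)$, which defines the map $i_2$. The class $(0,b')^t$ vanishes in hypercohomology iff there exists $(\alpha,\beta)^t\in\langle\begin{pmatrix}-f_2\cr f_1\end{pmatrix}\rangle^\perp$ with $b'=(\alpha,\beta)^t\cdot\begin{pmatrix}c_{11}\cr c_{12}\end{pmatrix}$; the syzygy gives $\begin{pmatrix}f_1\cr f_2\end{pmatrix}\cdot\begin{pmatrix}c_{11}\cr c_{12}\end{pmatrix}=0$, so this pairing descends to $H^1(\cK_\BB(f_1,f_2)^*)$, and $\begin{pmatrix}-c_{12}\cr c_{11}\end{pmatrix}\cdot\begin{pmatrix}c_{11}\cr c_{12}\end{pmatrix}=0$ lets it factor further through the quotient by $\langle\begin{pmatrix}-c_{12}\cr c_{11}\end{pmatrix}\rangle$, yielding the denominator stated in the lemma.

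The main bookkeeping challenge is the last step: verifying that the two relations one quotients by, namely $\langle\begin{pmatrix}f_1\cr f_2\end{pmatrix}\rangle$ (the Koszul boundary) and $\begin{pmatrix}-c_{12}\cr c_{11}\end{pmatrix}$ (the secondary Gobelin cycle), are exactly the ones needed to identify $\Ker(i_2)$ with the stated quotient. This parallels precisely the analogous step in Lemma \ref{lema4homology}, and as a cross-check it can be obtained by transporting that proof across the involution-induced isomorphism $\kappa^*\colon H_1(\cK_\BB(f_1,f_2))\to H^1(\cK_\BB(f_1,f_2)^*)$ of Section \ref{3.3}, which exchanges $\begin{pmatrix}c_{11}\cr c_{12}\end{pmatrix}$ with $\begin{pmatrix}-c_{12}\cr c_{11}\end{pmatrix}$ and $\pi_2$ with $\pi_1$.
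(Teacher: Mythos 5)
Your proposal is correct and follows essentially the same route as the paper's own proof: identify $\HH^{2j}(\cG^{1*}_\BB)=\Ker(C_\varphi^t)/\Imag(C_\psi^t)$, read the cocycle condition as $a\bigl(\begin{smallmatrix}-c_{12}\cr c_{11}\end{smallmatrix}\bigr)+b\bigl(\begin{smallmatrix}f_1\cr f_2\end{smallmatrix}\bigr)=0$ to get surjectivity of $\pi_1$ onto the colon ideal mod $(f_1,f_2)$, normalize kernel elements to $(0,b')^t$ with $b'\in\Ann_\BB(f_1,f_2)$, and identify the indeterminacy with the image of the pairing $\cdot\bigl(\begin{smallmatrix}c_{11}\cr c_{12}\end{smallmatrix}\bigr)$ on $\langle\bigl(\begin{smallmatrix}-f_2\cr f_1\end{smallmatrix}\bigr)\rangle^\perp$, which descends to the stated denominator. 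The closing remark about transporting Lemma \ref{lema4homology} via $\kappa^*$ is a valid cross-check but not needed, since your direct computation already matches the paper's argument.
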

\begin{proof}
$\begin{pmatrix} a \cr b \cr \end{pmatrix} \in \HH^{2j}(\cG^{1*}_\BB)$ satisfies $-ac_{12}+bf_1= 0$ and
$ac_{11}+bf_2=0$, or in matrix notation
$$
a\begin{pmatrix} -c_{12} \cr c_{11} \cr \end{pmatrix}
+b\begin{pmatrix} f_1 \cr f_2 \cr \end{pmatrix}=0\ \ \
\hbox{so that} \ \ \  a \in (\begin{pmatrix} f_{1} \cr f_{2} \cr \end{pmatrix}:
\begin{pmatrix} -c_{12} \cr c_{11} \cr \end{pmatrix}),$$ and if its cohomology class is 0
then
$\begin{pmatrix} a \cr b \cr \end{pmatrix} \in <\begin{pmatrix} -f_{2} \cr c_{11} \cr \end{pmatrix},
\begin{pmatrix} f_{1} \cr c_{12} \cr \end{pmatrix}>$ so that
$a \in (f_1,f_2)$. This shows that the map $\pi_1$ is well defined and surjective.

The elements   in the kernel of $\pi_1$  may be represented in the form $(\alpha_1f_1+\alpha_2f_2,b^\prime)^t$,
so that it has a cohomologous element of the form

$$\begin{pmatrix} 0 \cr b \cr \end{pmatrix}=
\begin{pmatrix} \alpha_1f_{1} + \alpha_2f_{2} \cr b^\prime \cr \end{pmatrix}-\alpha_1
\begin{pmatrix} f_{1} \cr c_{12} \cr \end{pmatrix}
+ \alpha_2
\begin{pmatrix} -f_{2} \cr c_{11} \cr \end{pmatrix}.$$
The element $(0,b)^t$ belongs to $\HH^{2j}(\cG^{1*}_\BB)$ if and only if $b\in \Ann_\BB(f_1,f_2)$.
It is the $0$ element if and only if

$$
\begin{pmatrix} 0 \cr b \cr \end{pmatrix}=\alpha
\begin{pmatrix} -f_{2} \cr c_{11} \cr \end{pmatrix}+\beta
\begin{pmatrix} f_{1} \cr c_{12} \cr \end{pmatrix}=
\begin{pmatrix}
(\alpha,\beta)^t\cdot (-f_2,f_1)^t  \cr
(\alpha,\beta)^t\cdot (c_{11},c_{12})^t  \cr \end{pmatrix}
 $$

Hence $(\alpha,\beta)^t\in <(-f_2,f_1)^t>^\perp$. Consider the map
$$\cdot\begin{pmatrix} c_{11} \cr c_{12} \cr \end{pmatrix} :
<\begin{pmatrix} -f_2 \cr f_1 \cr \end{pmatrix}>^\perp \longrightarrow \BB$$
which vanishes on
$
<\begin{pmatrix} f_1 \cr f_2 \cr \end{pmatrix},
\begin{pmatrix} -c_{12} \cr c_{11} \cr \end{pmatrix}>$.
Its image lies in $\Ann_\BB(f_1,f_2)$ and consists of those
elements which represent  0 in $\Ker(i_2)$.
\end{proof}

\section{The hyperhomology of the Gobelin $\cG^2_\BB$.}

\subsection{The complex $\tot(\cG^2_\BB)$.
}

The small Gobelin $\cG_\BB^{2}$ constructed from
\\
\begin{equation}
\begin{pmatrix}
c_{11} &  c_{12}\cr
c_{21} &  c_{22}\cr
\end{pmatrix}
\begin{pmatrix}f_1\cr
f_2\cr
\end{pmatrix}
=
0
\label{BMatrixIdentity2}
\end{equation}
is given by Figure 1 with $G=G_2$.
The total complex $\tot(\cG^{2}_\BB)$ is
\\
\begin{equation}
0 \xleftarrow{}
\BB^1
\xleftarrow{\left(
\begin{smallmatrix}{{f}}_{1}&
{{f}}_{{2}}\\
\end{smallmatrix}
\right)}
\BB^2
\xleftarrow{\left(
\begin{smallmatrix}{-{{f}}_{{2}}}&
{{c}}_{11}&
{{c}}_{{2}1}\\
{{f}}_{1}&
{{c}}_{1{2}}&
{{c}}_{{2}{2}}\\
\end{smallmatrix}
\right)}
\BB^3
\xleftarrow{\left(
\begin{smallmatrix}{-{{c}}_{1{2}}}&
{-{{c}}_{{2}{2}}}&
{{c}}_{11}&
{{c}}_{{2}1}\\
{{f}}_{1}&
0&
{{f}}_{{2}}&
0\\
0&
{{f}}_{1}&
0&
{{f}}_{{2}}\\
\end{smallmatrix}
\right)}
\BB^4
\xleftarrow{\left(
\begin{smallmatrix}{-{{f}}_{{2}}}&
0&
{{c}}_{11}&
{{c}}_{{2}1}&
0\\
0&
{-{{f}}_{{2}}}&
0&
{{c}}_{11}&
{{c}}_{{2}1}\\
{{f}}_{1}&
0&
{{c}}_{1{2}}&
{{c}}_{{2}{2}}&
0\\
0&
{{f}}_{1}&
0&
{{c}}_{1{2}}&
{{c}}_{{2}{2}}\\
\end{smallmatrix}
\right)}
\BB^5
\xleftarrow{}
\cdots
\label{hyperhomologysequenceG2}
\end{equation}
\begin{equation}
\label{formulas}
	\cdots
	\xleftarrow{}
	\BB^{2j-1}
	\xleftarrow{\varphi_j}
	\BB^{2j}
	\xleftarrow{\psi_j}
	\BB^{2j+1}
\xleftarrow{\varphi_j}
	\BB^{2j+2}
	\xleftarrow{}
	\cdots
\end{equation}
\noindent
\renewcommand{\arraystretch}{0.5}
\[
{\scriptsize \scriptstyle
	\varphi_j =
	\left(
	\begin{array}{*3{r@{\,}}r|*3{r@{\,}}r}
	-c_{12} & -c_{22}&&& c_{11} & c_{21}&& \\
	 &\ddots & \ddots && & \ddots & \ddots & \\
	& & -c_{12} & -c_{22}&& & c_{11} & c_{21}  \\
	&&&&&&\\\hline
	&&&&&&\\
	f_1 &&&& f_2 &\\
	& \ddots &&&& \ddots \\
	&& \ddots &&&& \ddots \\
	&&& f_1 &&&& f_2
	\end{array}
	\right), \quad
	\psi_j =
	\left(
	\begin{array}{*2{c@{\,}}c|c@{\,}c@{}c@{\,}c}
	-f_2 & & &c_{11} & c_{21}&& \\
	&\ddots & && \ddots & \ddots & \\
	&& -f_2 & && c_{11} & c_{21}  \\
	&&&&&&\\\hline
	&&&&&&\\
	f_1 &&& c_{12} & c_{22}&\\
	&\ddots&& & \ddots & \ddots & \\
	&&f_1& & & c_{12} & c_{22}
	\end{array}
	\right)}
\]

\subsection{Even dimensional hyperhomology $\HH_{2j}(\cG_\BB^2)$}

 Let $\alpha:=(a_1,\ldots,a_j,b_1,\ldots,b_{j+1})^t \in \BB^{2j+1}$
be a cycle  $\psi_j(\alpha)=0$, for $j\geq 1$. The cycle condition consists of $2j$ equalities,
and they may be organized into $j$ pairs of equations
by considering the $i$ and $i+j$ terms to obtain for $i=1,\ldots,j$:
\begin{equation}
a_i \begin{pmatrix}-f_2 \cr f_1 \end{pmatrix}
+b_i \begin{pmatrix}c_{11} \cr c_{12} \end{pmatrix}
+ b_{i+1} \begin{pmatrix}c_{21} \cr c_{22} \end{pmatrix} =
 \begin{pmatrix}0 \cr 0 \end{pmatrix}
\label{eq0}
\end{equation}
Considering these equations in $H_1(\KK_\BB(f_1,f_2))$
they become
\begin{equation}b_i \begin{pmatrix}c_{11} \cr c_{12} \end{pmatrix}
+ b_{i+1} \begin{pmatrix}c_{21} \cr c_{22} \end{pmatrix} =
 \begin{pmatrix}0 \cr 0 \end{pmatrix} \in H_1(\cK_\BB(f_1,f_2))
\label{eq1}
\end{equation}
This means that in order to solve (\ref{eq0}) we first solve (\ref{eq1}),
to obtain $(b_1,\ldots,b_{j+1})$, and then afterwards choose the $(a_1,\ldots,a_j)$
so that (\ref{eq0}) is satisfied. Such $a_i$ always exist and are unique up
to adding an element in $Ann_\BB(f_1,f_2)$. This and a direct inspection
of the columns of $\varphi_j$ give that the projection
$\rho_j(\alpha)=(b_1,\ldots,b_{j+1})$ induces the horizontally exact diagram:
$$
\begin{matrix}
0 & \rightarrow & \BB^j & \rightarrow &\BB^{2j+1} & \xrightarrow{\rho_j}& \BB^{j+1} & \rightarrow& 0 \cr
&& \cup && \cup && \cup  \cr
0 & \rightarrow & \Ann_\BB(f_1,f_2)^j & \rightarrow &\Ker(\psi_j) & \rightarrow& \rho_j(\Ker(\psi_j)) & \rightarrow& 0 \cr
&& \cup && \cup && \cup  \cr
0 & \rightarrow &  (\begin{pmatrix}-c_{12} \cr c_{11} \end{pmatrix}\cdot
\begin{pmatrix}f_1 \cr f_2 \end{pmatrix}^\perp
+\begin{pmatrix}-c_{22} \cr c_{21} \end{pmatrix} \cdot
\begin{pmatrix}f_1 \cr f_2 \end{pmatrix}^\perp
)^j & \rightarrow &Im(\varphi_j)  & \rightarrow& (f_1,f_2)^{j+1} & \rightarrow& 0 \cr
\end{matrix}
$$
In particular, we get a well defined map
$$\tilde\rho_j:\HH_{2j}(\cG^2_\BB) \rightarrow (\frac{\BB}{(f_1,f_2)})^{j+1},$$
which tells us that the $b_j$ components of the cycle $\alpha$
are invariants of the hyperhomology classes, when considered in the ring $\frac{\BB}{(f_1,f_2)}$.
Recall the definition of the flag of ideals (\ref{flag1})  and (\ref{flag2}) in
$ \BB$.

 \begin{prop} \label{prop1} Let $\alpha:=(a_1,\ldots,a_j,b_1,\ldots,b_{j+1})^t \in \BB^{2j+1}$
be a cycle  $\psi_j(\alpha)=0$ with $j\geq 1$, then: \vskip 2mm

{\rm a)} For $k=1,\ldots,j+1$ we have $b_k \in F_{k-1}^\prime \cap F_{j-k+1}  $; in particular $b_1 \in F_j $ and
$b_{j+1} \in F_j^\prime$.

{\rm b)} Given $b_1\in F_{j} $   we may construct a cycle $\alpha \in \BB^{2j+1}$
such that its $j+1$   component is $b_1$.

{\rm c)} Given $b_{j+1}\in F_{j}^\prime$ we may construct a cycle $\alpha \in \BB^{2j+1}$
such that its $2j+1$ component is $b_{j+1}$.

{\rm d)} The correspondence $b_1 \leftrightarrow b_{j+1}$ established between the components
of the $j$-cycles induces   isomorphisms

$$\Phi_j: \frac{F_j}{L_j} \longrightarrow \frac{F_j^\prime}{L_j^\prime}
$$
\end{prop}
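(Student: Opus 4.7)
Throughout I write $\gamma_1:=(c_{11},c_{12})^t$ and $\gamma_2:=(c_{21},c_{22})^t$ in $\BB^2$, and reason in $H_1(\cK_\BB(f_1,f_2))$ using the reformulation (\ref{eq1}) of the cycle equations (\ref{eq0}). For part~(a) I would run two parallel inductions on the chain of identities $b_i\gamma_1+b_{i+1}\gamma_2=0$ in $H_1(\cK_\BB(f_1,f_2))$. A backward induction on $k$ starting from the trivial base $k=j+1$ ($F_0=\BB$): if $b_{k+1}\in F_{j-k}$, then $b_k\gamma_1=-b_{k+1}\gamma_2\in F_{j-k}\gamma_2$ in $H_1$, so by definition $b_k\in(F_{j-k}\gamma_2:\gamma_1)_{H_1}=F_{j-k+1}$. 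A symmetric forward induction on $k$ starting from $k=1$ (trivial, since $F^\prime_0=\BB$) gives $b_k\in F^\prime_{k-1}$, and combining the two yields $b_k\in F^\prime_{k-1}\cap F_{j-k+1}$.

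For parts~(b) and~(c), given $b_1\in F_j$ (resp.\ $b_{j+1}\in F^\prime_j$), I would construct the remaining components iteratively. The defining inclusion $F_{j-k+1}\gamma_1\subseteq F_{j-k}\gamma_2$ in $H_1(\cK_\BB(f_1,f_2))$ produces $b_{k+1}\in F_{j-k}$ with $b_k\gamma_1+b_{k+1}\gamma_2\equiv 0$ in $H_1$, and each such identity lifts back to (\ref{eq0}) for some $a_i\in\BB$ (unique modulo $\Ann_\BB(f_1,f_2)$), because $b_i\gamma_1+b_{i+1}\gamma_2$ automatically lies in $<(f_1,f_2)^t>^\perp$ (since $\gamma_1,\gamma_2$ do) and its class in $H_1$ is zero, hence it is a multiple of $(-f_2,f_1)^t$. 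Case~(c) is the mirror argument with $\gamma_1$ and $\gamma_2$ interchanged.

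Part~(d) is the crux and the principal obstacle. The key intermediate claim for well-definedness of $\Phi_j$ is: any cycle $\alpha$ with $b_1\in L_j$ satisfies $b_k\in L_{j-k+1}+L^\prime_{k-1}$ for every $k$, and in particular $b_{j+1}\in L^\prime_j$. I would establish this by induction on $k$: writing $b_k=b_k^{(0)}+\ell^\prime_{k-1}$ with $b_k^{(0)}\in L_{j-k+1}$ and $\ell^\prime_{k-1}\in L^\prime_{k-1}$, the defining property of $L_{j-k+1}$ supplies $b_{k+1}^{(0)}\in L_{j-k}$ with $b_k^{(0)}\gamma_1+b_{k+1}^{(0)}\gamma_2=0$ in $H_1$; subtracting from the cycle relation at index $k$ gives $(b_{k+1}-b_{k+1}^{(0)})\gamma_2=-\ell^\prime_{k-1}\gamma_1\in L^\prime_{k-1}\gamma_1$ in $H_1$, whence $b_{k+1}-b_{k+1}^{(0)}\in(L^\prime_{k-1}\gamma_1:\gamma_2)_{H_1}=L^\prime_k$. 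Applied to the difference of two cycles whose $b_1$-components agree modulo $L_j$, this yields a well-defined map $\Phi_j:F_j/L_j\to F^\prime_j/L^\prime_j$. For the isomorphism I would invoke the symmetry that swaps the two syzygies: the reversal $\tilde a_i:=a_{j-i+1}$, $\tilde b_i:=b_{j-i+2}$ converts a cycle into a cycle for the small Gobelin with $\gamma_1$ and $\gamma_2$ interchanged, hence with $(F_j,L_j)\leftrightarrow(F^\prime_j,L^\prime_j)$, while swapping $b_1\leftrightarrow b_{j+1}$; so the construction applied to the swapped data provides a two-sided inverse of $\Phi_j$. The delicate point is precisely this hybrid decomposition $b_k\in L_{j-k+1}+L^\prime_{k-1}$, which tracks both flags simultaneously and cannot be extracted from either of the one-sided inductions used in~(a).
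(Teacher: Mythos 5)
Your proposal is correct. Parts (a)--(c) follow the paper's own argument essentially verbatim: the same forward and backward chains through the relations $b_i\bigl(\begin{smallmatrix}c_{11}\cr c_{12}\end{smallmatrix}\bigr)+b_{i+1}\bigl(\begin{smallmatrix}c_{21}\cr c_{22}\end{smallmatrix}\bigr)=0$ in $H_1(\cK_\BB(f_1,f_2))$, and the same lifting of these relations back to $\BB^2$ to recover the $a_i$ (your explicit remark that the lift exists because the class vanishes in $H_1$, so the defect is a multiple of $(-f_2,f_1)^t$, is a point the paper leaves implicit). The genuine divergence is in (d): the paper establishes well-definedness of $\Phi_j$ only for cycles with $b_1=0$, deducing $b_2\in L_1^\prime,\ldots,b_{j+1}\in L_j^\prime$, and obtains the inverse ``by reversing the above procedure''; taken literally this controls two cycles with the same first component, while the passage to representatives differing by an element of $L_j$ is left implicit (it can be supplied by running the construction of (b) with the $L$-flag in place of the $F$-flag, which produces for any $b_1\in L_j$ a cycle ending in $0$). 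Your hybrid claim that a cycle with $b_1\in L_j$ has $b_k\in L_{j-k+1}+L_{k-1}^\prime$ proves exactly this missing statement in a single induction, so your treatment of well-definedness is more complete than the paper's; and your reversal $\tilde a_i=a_{j-i+1}$, $\tilde b_i=b_{j-i+2}$, which interchanges the two syzygies and hence the two flags, is the precise form of the paper's ``reversing the procedure'' and correctly yields a two-sided inverse. In short: same overall strategy, with a sharper, fully justified version of the key well-definedness step in (d).
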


\begin{proof} a) We have $b_{1}\in \BB = F_0^\prime$.
Equation (\ref{eq1}) with $i=1$ gives
$  b_2 \in
(\begin{pmatrix}c_{11} \cr c_{12} \end{pmatrix}:
\begin{pmatrix}c_{21} \cr c_{22} \end{pmatrix})_{H_1(\cK_\BB(f_1,f_2))}
=F_1^\prime$.
Now Equation (\ref{eq1}) with $i=2$ gives
$  b_3 \in
(F_1^\prime\begin{pmatrix}c_{11} \cr c_{12} \end{pmatrix}:
\begin{pmatrix}c_{21} \cr c_{21} \end{pmatrix})_{H_1(\cK_\BB(f_1,f_2))}
=F_2^\prime$, since we already know that $b_2\in F_1^\prime$.
Repeating the procedure for increasing
$i$ till we arrive for $i=j$ to the conclusion
$b_{j+1}\in F_{j-1}^\prime$.

Now beginning at the other extreme, we have
$b_{j+1}\in \BB = F_0 $.
Equation (\ref{eq1}) with $i=j$ gives
$  b_j \in
(\begin{pmatrix}c_{21} \cr c_{22} \end{pmatrix}:
\begin{pmatrix}c_{11} \cr c_{12} \end{pmatrix})_{H_1(\cK_\BB(f_1,f_2))}
=F_1$.
Now Equation (\ref{eq1}) with $i=j-1$ gives
$  b_{j-1} \in
(F_1\begin{pmatrix}c_{21} \cr c_{22} \end{pmatrix}:
\begin{pmatrix}c_{11} \cr c_{12} \end{pmatrix})_{H_1(\cK_\BB(f_1,f_2))}
=F_2$, since we already know that $b_j\in F_1 $.
Repeating the procedure for increasing
$i$ till we arrive for $i=1$ to the conclusion
$b_1\in F_{j} $.

b)  Now assume that one begins with $b_1\in F_j$,
then one obtains from the hypothesis a $b_2 \in F_{j-1}$
satisfying the
 equation (\ref{eq1}) with $i=1$. Now using the definition of
$F_{j-1}$ one obtains a $b_3 \in F_{j-2}$
satisfying the
correspondign equation (\ref{eq1}), and so on.
Now transporting these equations in $H_1(\cK_\BB(f_1,f_2))$
to $\BB^2$ one obtains the corresponding equations (\ref{eq0}) and
the elements $a_1,\ldots,a_j\in \BB$.
Grouping them together into a vector $\alpha$ the equations consist
exactly of the cycle condition $\psi_j(\alpha)=0$.

c) Similar to b).

d) Suppose that we have a cycle that has the form $(0,b_2,\ldots,b_{j+1})$.
Equation (\ref{eq1}) with $i=1$ gives $b_2 \in
\begin{pmatrix}0
:\begin{pmatrix}c_{21} \cr c_{22} \end{pmatrix}\end{pmatrix} =
L_1^\prime$.
Equation (\ref{eq1}) with $i=2$ gives $b_3 \in
L_2^\prime$ since we already have information on $b_2$, till one obtains that $b_{j+1} \in L_j^\prime$.
This shows that the map $\Phi_j$ is well defined. But by reversing the above procedure,
we obtain that $\Phi_j^{-1}$ is also well defined; hence $\Phi_j$ is an isomorphism
\end{proof}

 $\Phi_1$   gives the isomorphism

$$
\frac{\begin{pmatrix}\begin{pmatrix}c_{21} \cr c_{22} \end{pmatrix} :
\begin{pmatrix}c_{11} \cr c_{12} \end{pmatrix} \end{pmatrix}    }
{\begin{pmatrix}0 :
\begin{pmatrix}c_{11} \cr c_{12} \end{pmatrix}\end{pmatrix} }
\leftrightarrow
 \frac{\begin{pmatrix}\begin{pmatrix}c_{11} \cr c_{12} \end{pmatrix} :
\begin{pmatrix}c_{21} \cr c_{22} \end{pmatrix} \end{pmatrix}}
{\begin{pmatrix}0 :
\begin{pmatrix}c_{21} \cr c_{22} \end{pmatrix}\end{pmatrix} }
$$
that can be interpreted as being induce from the quotient of the 2 syzygies. Similarly,
the other isomorphisms $\Phi_j$ are induced from powers of this quotient.

\subsection{Even dimensional hypercohomology $\HH^{2j}( \cG^{2*}_\BB) $.   }

Consider the
  complex $\tot(\cG^{2}_\BB)^*$ obtained from
$(\ref{hyperhomologysequenceG2})$ by applying the functor
$\Hom_\BB(*,\BB)$. Since $\BB$ is a   Gorenstein $K$-algebra of
dimension 0 and the complex is by free $\BB$-modules,
the dual complex
has the form
\\
$$
0 \xrightarrow{}
\BB^1
\xrightarrow{\left(
\begin{smallmatrix}{{f}}_{1}&
{{f}}_{{2}} \\
\end{smallmatrix}
\right)^t}
\BB^2
\xrightarrow{\left(
\begin{smallmatrix}{-{{f}}_{{2}}}&
{{c}}_{11}&
{{c}}_{{2}1}\\
{{f}}_{1}&
{{c}}_{1{2}}&
{{c}}_{{2}{2}}\\
\end{smallmatrix}
\right)^t}
\BB^3
\xrightarrow{\varphi_2^t}
	\cdots
		\xrightarrow{\varphi_j^t}
	\BB^{2j}
	\xrightarrow{\psi_j^t}
	\BB^{2j+1}
\xrightarrow{\varphi_{j+1}^t}
	\BB^{2j+2}
	\xrightarrow{}
	\cdots
$$
\noindent
\renewcommand{\arraystretch}{0.5}

 \begin{prop} \label{prop2} Let $\beta:=(a_1,\ldots,a_j,b_1,\ldots,b_{j+1})^t \in \BB^{2j+1},
j\geq 1$,
be a cocycle   $\varphi_{j+1}^t(\beta)=0$, then:

\vskip 3mm
{\rm a)} For $i=1,\ldots,j $ we have $a_i \in L_{i} \cap L^\prime_{j+1  -i}$; in particular $a_1 \in L_1 \cap L_{j }^\prime $ and
$a_{j } \in L_j \cap L_1^\prime$.

\vskip 2mm
{\rm b)} Given $a_1\in L_{1} \cap L_j^\prime$, we may construct a cocycle $\beta \in \BB^{2j+1}$
such that its first component is $a_1$.

\vskip 2mm
{\rm c)} Given $a_{j} \in L_{j} \cap L_1^\prime$, we may construct a cocycle $\beta \in \BB^{2j+1}$
such that its $j$ component is $a_{j}$.

\vskip 2mm

{\rm d)} The correspondence $a_1 \leftrightarrow a_{j}$ established between the components
of the $j$-cocycles induces   isomorphisms

$$\Psi_j: \frac{L_1 \cap L_j^\prime}{L_1 \cap L_{j-1}^\prime} \longrightarrow
\frac{L_j \cap L_1^\prime}{L_{j-1} \cap L_{1}^\prime}
$$

\end{prop}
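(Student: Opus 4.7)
The plan is to adapt the cycle analysis of Proposition \ref{prop1} to cocycles via the duality $\kappa^*$. First I would unpack $\varphi_{j+1}^t(\beta)=0$ explicitly; its $2j+2$ scalar relations regroup into $j+1$ pairs in $\BB^2$,
$$
a_k\begin{pmatrix}-c_{12}\\c_{11}\end{pmatrix}+a_{k-1}\begin{pmatrix}-c_{22}\\c_{21}\end{pmatrix}+b_k\begin{pmatrix}f_1\\f_2\end{pmatrix}=0,\qquad k=1,\ldots,j+1,
$$
subject to the boundary conditions $a_0=a_{j+1}=0$. Passing to $H^1(\cK_\BB(f_1,f_2)^*)$ kills the $b_k$-term, and applying $\kappa^*$ rewrites each pair as $a_k(c_{11},c_{12})^t+a_{k-1}(c_{21},c_{22})^t=0$ in $H_1(\cK_\BB(f_1,f_2))$. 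These are the cohomological analogues of the cycle equations in Proposition \ref{prop1}, with the roles of the $a_i$'s and the $b_i$'s interchanged.

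For (a) I would induct from both ends of the string. Since $a_0=0$, the $k=1$ equation forces $a_1\in L_1$; and assuming $a_{k-1}\in L_{k-1}$, the $k$-th relation gives $a_k(c_{11},c_{12})^t\in L_{k-1}(c_{21},c_{22})^t$ in $H_1$, hence $a_k\in L_k$ by the colon ideal defining the flag. The symmetric induction starting from $a_{j+1}=0$ and using $L^\prime_\bullet$ produces $a_k\in L^\prime_{j+1-k}$. For (b), given $a_1\in L_1\cap L_j^\prime$, I would reverse the argument: the hypothesis $a_1\in L_j^\prime$ yields $a_2\in L_{j-1}^\prime$ solving the $k=2$ equation in $H_1$, iterating gives $a_k\in L^\prime_{j+1-k}$, and the closing condition at $k=j+1$ is automatic because $a_j\in L_1^\prime$. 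Each $H_1$-equation then lifts back to $\BB^2$ via $\kappa^*$ to supply $b_k$, unique modulo $\Ann_\BB(f_1,f_2)$. Part (c) is symmetric.

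For (d), define $\Psi_j([a_1]):=[a_j]$ using any extension furnished by (b). Well-definedness rests on a shift argument: the difference of two extensions is a cocycle with leading entry $a_1=0$, and dropping this zero produces a length-$(j-1)$ cocycle to which (a) applies, forcing the difference of the $a_j$-entries into $L_{j-1}\cap L_1^\prime$. Conversely, if $a_1\in L_1\cap L_{j-1}^\prime$, then (b)/(c) at level $j-1$ gives a length-$(j-1)$ cocycle which extends to length $j$ by appending $a_j=0$ and any $b_{j+1}\in\Ann_\BB(f_1,f_2)$; hence $\Psi_j$ descends to the stated quotients. Surjectivity follows from (c), and the same shift argument applied to $\Psi_j^{-1}$ delivers injectivity. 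The main obstacle I anticipate is parsing the transposed differential $\varphi_{j+1}^t$ carefully so that its relations genuinely pair into Koszul-homology identities; once that pairing is in hand, the two flags make the inductions essentially mechanical and the shift argument mirrors the one implicit in Proposition \ref{prop1}(d).
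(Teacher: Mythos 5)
Your proposal is correct and follows essentially the same route as the paper: regroup the $2j+2$ scalar relations of $\varphi_{j+1}^t(\beta)=0$ into $j+1$ pairs, use $\kappa$ to read them in $H_1(\cK_\BB(f_1,f_2))$ so the colon ideals defining $L_\bullet$ and $L_\bullet^\prime$ apply, induct from both ends for (a), reverse the induction and lift back to $\BB^2$ for (b) and (c), and prove (d) by subtracting two extensions and shifting to a length-$(j-1)$ cocycle. Your bookkeeping in (d), placing the ambiguity in $L_{j-1}\cap L_1^\prime$, is in fact cleaner than the paper's own wording and consistent with the stated isomorphism.
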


\begin{proof}  a) The cocycle condition,
which consists of $2j+2$ equalities, may be organized into $j+1$ pair of equations
by considering the $i$ and $i+j$ terms to obtain:
$$a_1 \begin{pmatrix}-c_{12} \cr c_{11} \end{pmatrix}
+b_1 \begin{pmatrix}f_{1} \cr f_{2} \end{pmatrix} =
 \begin{pmatrix}0 \cr 0 \end{pmatrix}
\ \ , \ \
a_j \begin{pmatrix}-c_{22} \cr c_{21} \end{pmatrix}
+b_{j+1} \begin{pmatrix}f_{1} \cr f_{2} \end{pmatrix} =
 \begin{pmatrix}0 \cr 0 \end{pmatrix}
$$
 and for $i=1,\ldots,j-1$:
$$a_i \begin{pmatrix}-c_{22} \cr c_{21} \end{pmatrix}
+a_{i+1} \begin{pmatrix}-c_{12} \cr c_{11} \end{pmatrix}
+ b_{i+1} \begin{pmatrix}f_{1} \cr f_{2} \end{pmatrix} =
 \begin{pmatrix}0 \cr 0 \end{pmatrix}$$
Considering these equations in
$$H^1(\KK_\BB(f_1,f_2) ^*)=\frac{\begin{pmatrix}-f_{2} \cr f_{1} \end{pmatrix}^\perp}
{\begin{pmatrix}f_{1} \cr f_{2} \end{pmatrix}}$$
they become for $i=1,\ldots,j-1$:
\begin{equation} \label{cocycle}
a_1 \begin{pmatrix}-c_{12} \cr c_{11} \end{pmatrix} =
 \begin{pmatrix}0 \cr 0 \end{pmatrix} \ \ , \ \
a_i \begin{pmatrix}-c_{22} \cr c_{21} \end{pmatrix}
+a_{i+1} \begin{pmatrix}-c_{12} \cr c_{11} \end{pmatrix} =
 \begin{pmatrix}0 \cr 0 \end{pmatrix} \ \   ,\ \
a_j \begin{pmatrix}-c_{22} \cr c_{21} \end{pmatrix} =
 \begin{pmatrix}0 \cr 0 \end{pmatrix}
\end{equation}
We note that the $\BB$-involution $\kappa:\BB^2 \longrightarrow \BB^2$
in (\ref{kappa})
has the property that for  $\tau ,\tau^\prime \in \BB^2$
and any ideal $I \subset \BB$ we have
$
(I\tau:\tau^\prime) = (I\kappa(\tau):\kappa(\tau^\prime))
$
and hence the flags constructed with the equation (\ref{syzygyreduced})
coincide with the flags constructed with the equation
\begin{equation}
\begin{pmatrix} -c_{12}&c_{11}\cr
-c_{22}&c_{21}\cr
\end{pmatrix}
\begin{pmatrix} -f_2\cr
f_1\cr
\end{pmatrix} = \begin{pmatrix} 0\cr
0\cr
\end{pmatrix}.
\end{equation}
The cocycle conditions (\ref{cocycle}) are then:
$$a_{1}\in
(\begin{pmatrix}0 \cr 0 \end{pmatrix}:
\begin{pmatrix}-c_{12} \cr c_{11} \end{pmatrix})=L_1  \hskip 2cm,
\hskip2cm
a_{j}\in
(\begin{pmatrix}0 \cr 0 \end{pmatrix}:
\begin{pmatrix}-c_{22} \cr c_{21} \end{pmatrix})=L_1^\prime$$
and if we begin to solve (*) with $i=1$ till i = j - 1:
$$a_{i+1} \in (L_{i} \begin{pmatrix}-c_{22} \cr c_{21} \end{pmatrix}:
\begin{pmatrix}-c_{12} \cr c_{11} \end{pmatrix}) = L_{i+1 }     $$
and if we begin to solve (*) with $i=j-1$ till i= 1:

$$
a_{i }\in
(L^\prime_{j-i}\begin{pmatrix}-c_{12} \cr c_{22} \end{pmatrix}:
\begin{pmatrix}-c_{22} \cr c_{21} \end{pmatrix})=L_{j+1-i }^\prime$$

\hskip 2mm
b) and c) Similar to the proofs of b)and c) in Proposition \ref{prop1}.\\

\hskip 2mm
d) If we have 2 $j$-cocycles with the same first component $a_1$, by substracting them,
we obtain a $j-1$-cocycle, whose last component will be in
$L_j \cap L^\prime_{j}$, hence $\Psi_j(a_1)$ is well defined modulo $L_j \cap L^\prime_{j}$,
Reversing the argument by beginning  with   the last component $a_j$,
we obtain $\Psi^{-1}$.
\end{proof}

\subsection{The long exact sequence of hyperhomology of the small Gobelin}

 In this case the exact sequence of complexes in
Theorem \ref{ExactaGobelinos} is Figure 2, where

\begin{figure}[h]
 {\scriptsize
$$
\begin{array}{*{30}{c@{\,}}}

&&
&&
& 0 &
&& 0 &
&& 0 &
&& 0 &
&& 0 &
&& &
&& 0 &
&& 0 &
&&
\\

&&
&&
& \downarrow &
&&\downarrow &
&&\downarrow &
&&\downarrow &
&&\downarrow &
&& &
&&\downarrow &
&&\downarrow &
&&
\\

&
& 0 & \FlechaIzq{} &
& \BB  & \FlechaIzq{(f_{1},f_{2})}&
& \BB^2  & \FlechaIzq{C_{\psi }}&
& \BB^2  & \FlechaIzq{C_{\varphi }}&
& \BB^2  & \FlechaIzq{C_{\psi }}&
& \BB^2  & \FlechaIzq{C_{\varphi }}&
& \cdots & \FlechaIzq{C_{\varphi }}&
& \BB^2  & \FlechaIzq{C_{\psi }}&
& \BB^2  & \FlechaIzq{C_{\varphi }}&
&\cdots
\\

&&
&&
& \downarrow \iota_0 &
&&\downarrow \iota_1 &
&&\downarrow \iota_2 &
&&\downarrow \iota_3 &
&&\downarrow \iota_4 &
&& &
&&\downarrow \iota_{2j-1} &
&&\downarrow \iota_{2j}  &
\\

&
& 0 & \FlechaIzq{}&
& \BB   & \FlechaIzq{(f_{1},f_{2})}&
& \BB^2 & \FlechaIzq{\psi_{1}}&
& \BB^3 & \FlechaIzq{\varphi_{1}}&
& \BB^4 & \FlechaIzq{\psi_{2}} &
& \BB^5 & \FlechaIzq{\varphi_{2}}&
& \cdots & \FlechaIzq{\varphi_{j}}&
& \BB^{2j} & \FlechaIzq{\psi_{j}}&
& \BB^{2j+1} & \FlechaIzq{\varphi_{j+1}}&
&\cdots
\\
&&
&&
& \downarrow  &
&&\downarrow  &
&&\downarrow \sigma^*_2 &
&&\downarrow \sigma^*_3 &
&&\downarrow \sigma^*_4 &
&& &
&&\downarrow \sigma^*_{2j-1} &
&&\downarrow \sigma^*_{2j} &
\\

&
& 0 & \FlechaIzq{}&
& 0 & \FlechaIzq{} &
& 0 & \FlechaIzq{}&
& \BB   & \FlechaIzq{(f_{1},f_{2})}&
& \BB^2 & \FlechaIzq{\psi_{1}}&
& \BB^3 & \FlechaIzq{\varphi_{2}}&
& \cdots & \FlechaIzq{\varphi_{j-1}}&
& \BB^{2j-2} & \FlechaIzq{\psi_{j-1}}&
& \BB^{2j-1} & \FlechaIzq{\varphi_{j}}&
&\cdots
\\
&&
&
&&
&&
&&
&&\downarrow  &
&&\downarrow  &
&&\downarrow  &
&& &
&&\downarrow  &
&&\downarrow  &
\\
&&
&&
&&
&&&
&& 0 &
&& 0 &
&& 0 &
&&  &
&& 0 &
&& 0 &
\\

\end{array}
$$
}
\begin{center}
\parbox{12cm}{
 \caption{The short exact sequence of complexes.} \label{DiagramaGobelinos}
 }
\end{center}
\end{figure}
$$\iota_{2j}\begin{pmatrix} a \cr b \cr \end{pmatrix}
=
 \begin{pmatrix} a \cr 0_{j-1} \cr b \cr 0_{j}
 \end{pmatrix} \in \BB^{2j+1},
\hskip 5mm\hskip 5mm
\iota_{2j+1}\begin{pmatrix} a \cr b \cr \end{pmatrix}
=
 \begin{pmatrix} a \cr 0_{j} \cr b \cr
0_{j} \end{pmatrix}  \in \BB^{2j+2},
\hskip 5mm\hskip 5mm
\sigma_j^*\begin{pmatrix} a_1 \cr a_{2} \cr \vdots \cr b_1
\cr b_{2} \cr \vdots
 \end{pmatrix} =
\begin{pmatrix}   a_{2} \cr a_{3} \cr \vdots \cr
 b_{2} \cr b_{3} \cr\vdots
 \end{pmatrix}
$$
where $0_j$ denotes the $0$ column vector of size $j$,
then the boundary maps
$\partial_k:\HH_{k-1}(\cG^{2}_\BB)
\xrightarrow{}
\HH_{k}(\cG^{1}_\BB)
$ are induced from the maps:
\begin{equation}
\label{frontera1}
\partial_{2j+1}: \begin{pmatrix} a_1  \cr \vdots \cr a_j \cr b_1
 \cr \vdots \cr b_{j+1}
 \end{pmatrix} \xrightarrow {{\sigma_{2j+1}^*}^{-1}}
  \begin{pmatrix} 0\cr a_1  \cr \vdots \cr a_j \cr 0\cr b_1
 \cr \vdots \cr b_{j+1}
 \end{pmatrix} \xrightarrow{\psi_{j+1}}
b_1\begin{pmatrix}   c_{21} \cr 0_j\cr  c_{22} \cr 0_j
 \end{pmatrix} \xrightarrow {\iota_{2j}^{-1}}
b_1\begin{pmatrix}   c_{21} \cr  c_{22}
 \end{pmatrix} \in \BB^2
\end{equation}
\begin{equation}
\label{frontera2}
\partial_{2j}: \begin{pmatrix} a_1  \cr \vdots \cr a_j \cr b_1
 \cr \vdots \cr b_{j}
 \end{pmatrix} \xrightarrow {{\sigma_{2j}^*}^{-1}}
  \begin{pmatrix} 0\cr a_1  \cr \vdots \cr a_j \cr 0\cr b_1
 \cr \vdots \cr b_{j}
 \end{pmatrix} \xrightarrow {\varphi_{j+1}}
 \begin{pmatrix}   -c_{22}a_1+  c_{21}b_1 \cr 0_{2j}  \cr
 \end{pmatrix}
\xrightarrow {\iota_{2j}^{-1}}
 \begin{pmatrix}
 \begin{pmatrix}  -c_{22} \cr  c_{21} \end{pmatrix} \cdot
\begin{pmatrix}    a_{1} \cr  b_{1}  \end{pmatrix}
 \cr 0
 \end{pmatrix}
\in \BB^2
\end{equation}
 The long exact sequence of hyperhomology groups
of the exact sequence of double complexes in Figure \ref{DiagramaGobelinos} is:
$$
0\xleftarrow{}
\HH_{0}(\cG^{2}_\BB)
\xleftarrow{i_0}
\HH_{0}(\cG^{1}_\BB)
\xleftarrow{\partial_{0}}
0\xleftarrow{\sigma_1^*}
\HH_{1}(\cG^{2}_\BB)
\xleftarrow{i_1}
\HH_{1}(\cG^{1}_\BB)
\xleftarrow{\partial_1}
\HH_{0}(\cG^{2}_\BB)
\xleftarrow{\sigma_2^*}
\cdots
$$
\begin{equation}
\xleftarrow{\sigma_{2j-1}^*}
\HH_{2j-1}(\cG^{2}_\BB)
 \xleftarrow{i_{2j-1}}
\HH_{2j-1}(\cG^{1}_\BB)
\xleftarrow{\partial_{2j-1}}
\HH_{2j-2}(\cG_\BB^2)
\xleftarrow{\sigma_{2j}^*}
\HH_{2j}(\cG^{2}_\BB)
\xleftarrow{i_{2j}}
\label{HomGob3}
\end{equation}
\begin{equation}
\xleftarrow{}
\HH_{2j}(\cG^{1}_\BB)
\xleftarrow{\partial_{2j}}
\HH_{2j-1}(\cG^{2}_\BB)
\xleftarrow{\sigma_{2j+1}}
\HH_{2j+1}(\cG^{2}_\BB)
\xleftarrow{i_{2j+1}}
\HH_{2j+1}(\cG^{1}_\BB)
\xleftarrow{\partial_{2j+1}}
\cdots
\label{HomGob4}
\end{equation}

For $j\geq1$,  the map
$$
\partial_{2j+1}:\HH_{2j}(\cG^{2}_\BB)
\longrightarrow \HH_{2j+1}(\cG^{1}_\BB) =
\frac
{<\begin{pmatrix}f_{1} \cr f_{2}\end{pmatrix}
,\begin{pmatrix}-c_{12} \cr c_{11}\end{pmatrix}
>^\perp}
{<\begin{pmatrix}-f_{2} \cr f_{1}\end{pmatrix},
\begin{pmatrix}c_{11} \cr c_{12}\end{pmatrix}>},
\hskip 5mm\hskip 5mm
\partial_{2j+1} \begin{pmatrix} a_1  \cr \vdots \cr a_j \cr b_1
 \cr \vdots \cr b_{j+1}
 \end{pmatrix} =
b_1\begin{pmatrix}   c_{21} \cr  c_{22}
 \end{pmatrix}
$$
has image $F_j \begin{pmatrix}   c_{21} \cr  c_{22}
 \end{pmatrix}$ by Proposition \ref{prop1}.
The map
$$\begin{pmatrix}c_{21} \cr c_{22}\end{pmatrix}:
{F_j} \longrightarrow \HH_{2j+1}(\cG^{1}_\BB)$$
has kernel $F_j \cap F_1^\prime$ and
hence the  map
$\begin{pmatrix}c_{21} \cr c_{22}\end{pmatrix}:
\frac{F_j}{F_j \cap F_1^\prime } \longrightarrow \HH_{2j+1}(\cG^{1}_\BB)$
is injective.
Hence both maps have the same image and we obtain:

\begin{lema}  \label{lema4a} For $j\geq1$ the image of $\partial_{2j+1}$ is isomorphic to
$\frac{F_j}{F_j \cap F_1^\prime }$.\end{lema}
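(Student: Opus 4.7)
The plan is to realize $\Imag(\partial_{2j+1})$ as the image of a direct multiplication map $\mu\colon F_j \to \HH_{2j+1}(\cG^1_\BB)$, $b_1 \mapsto b_1(c_{21},c_{22})^t$, and then apply the first isomorphism theorem after computing $\ker\mu$.

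First I would identify $\Imag(\partial_{2j+1})$ with $\Imag(\mu)$. From the explicit formula (\ref{frontera1}), $\partial_{2j+1}$ sends every class to an element of the form $b_1(c_{21},c_{22})^t$, where $b_1$ is the first $b$-component of a cycle $\alpha\in\ker\psi_j$. Proposition \ref{prop1}(a) says such a $b_1$ always lies in $F_j$, and conversely part (b) of the same proposition shows that \emph{every} element of $F_j$ arises in this way. Hence $\Imag(\partial_{2j+1})=\Imag(\mu)=F_j\begin{pmatrix}c_{21}\\ c_{22}\end{pmatrix}$. Well-definedness of $\mu$ is automatic: given $b_1\in F_j$, Proposition \ref{prop1}(b) produces a cycle $\alpha$ with $\partial_{2j+1}(\alpha)=b_1(c_{21},c_{22})^t$, which is therefore a genuine class in $\HH_{2j+1}(\cG^1_\BB)$.

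The key computation is of $\ker\mu$. An element $b_1\in F_j$ lies in $\ker\mu$ iff there exist $\alpha,\beta\in\BB$ with
\[
b_1\begin{pmatrix}c_{21}\\ c_{22}\end{pmatrix} = \alpha\begin{pmatrix}-f_2\\ f_1\end{pmatrix} + \beta\begin{pmatrix}c_{11}\\ c_{12}\end{pmatrix}.
\]
Reducing this identity modulo the Koszul boundary $\langle(-f_2,f_1)^t\rangle$ eliminates the $\alpha$-term and yields $b_1(c_{21},c_{22})^t=\beta(c_{11},c_{12})^t$ in $H_1(\cK_\BB(f_1,f_2))$. By the very definition $F_1^\prime=\bigl((c_{11},c_{12})^t : (c_{21},c_{22})^t\bigr)_{H_1(\cK_\BB(f_1,f_2))}$, this is precisely the condition $b_1\in F_1^\prime$. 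The converse is immediate: any such identity in $H_1$ lifts back to an equality in $\BB^2$ modulo the same boundary, producing the required $\alpha$. Intersecting with the domain constraint $b_1\in F_j$ gives $\ker\mu=F_j\cap F_1^\prime$.

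The first isomorphism theorem then delivers $F_j/(F_j\cap F_1^\prime)\cong\Imag(\mu)=\Imag(\partial_{2j+1})$, which is the content of the lemma. I do not foresee any serious obstacle: the heart of the argument is the passage to $H_1$, which converts the ambient $\BB^2$-identity into the colon-ideal defining $F_1^\prime$; all other points amount to bookkeeping via Proposition \ref{prop1} and the periodic presentation of $\HH_{2j+1}(\cG^1_\BB)$.
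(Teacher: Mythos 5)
Your proposal is correct and follows essentially the same route as the paper: it identifies $\Imag(\partial_{2j+1})$ with $F_j\bigl(\begin{smallmatrix}c_{21}\cr c_{22}\end{smallmatrix}\bigr)$ via Proposition \ref{prop1} and then factors the multiplication map $F_j\to\HH_{2j+1}(\cG^1_\BB)$ through its kernel $F_j\cap F_1^\prime$. The only difference is that you spell out the kernel computation (reduction to $H_1(\cK_\BB(f_1,f_2))$ and the colon-ideal definition of $F_1^\prime$), which the paper merely asserts, so your write-up is if anything slightly more complete.
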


\begin{prop} \label{prop3}
For $j\geq1$ we have an exact sequence:

$$
0 \xleftarrow{}  \frac{F_{j-1}}{F_{j-1}\cap F_1^\prime}
\xleftarrow{}
\HH_{2j-2}(\cG^{2}_\BB)
\xleftarrow{\sigma_{2j}^*}
\HH_{2j}(\cG^{2}_\BB)
\xleftarrow{i_{2j}}
\HH_{2j}(\cG^{1}_\BB)
\xleftarrow{\partial_{2j}}
$$
\begin{equation}\label{HomGob5}
\xleftarrow{\partial_{2j}}
\HH_{2j-1}(\cG^{2}_\BB)
\xleftarrow{\sigma_{2j+1}^*}
\HH_{2j+1}(\cG^{2}_\BB)))
\xleftarrow{i_{2j+1}}
\frac{\HH_{2j+1}(\cG^{1}_\BB)
}{\frac{F_j}{F_j \cap F_1^\prime}\begin{pmatrix}c_{21} \cr c_{22}\end{pmatrix}}
\xleftarrow{}
0
\end{equation}
\end{prop}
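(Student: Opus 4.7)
The strategy is to obtain the exact sequence (\ref{HomGob5}) by truncating the long exact sequence of hyperhomology (\ref{HomGob3})-(\ref{HomGob4}) at two spots, using Lemma \ref{lema4a} to give explicit descriptions of the images of the boundary maps $\partial_{2j-1}$ and $\partial_{2j+1}$ sitting at the two ends of the relevant segment.

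From (\ref{HomGob3})-(\ref{HomGob4}) I extract the exact segment
\begin{equation*}
\HH_{2j-1}(\cG^1_\BB)\xleftarrow{\partial_{2j-1}}\HH_{2j-2}(\cG^2_\BB)\xleftarrow{\sigma_{2j}^*}\HH_{2j}(\cG^2_\BB)\xleftarrow{i_{2j}}\HH_{2j}(\cG^1_\BB)\xleftarrow{\partial_{2j}}\HH_{2j-1}(\cG^2_\BB)\xleftarrow{\sigma_{2j+1}^*}\HH_{2j+1}(\cG^2_\BB)\xleftarrow{i_{2j+1}}\HH_{2j+1}(\cG^1_\BB)\xleftarrow{\partial_{2j+1}}\HH_{2j}(\cG^2_\BB).
\end{equation*}
At the left end, Lemma \ref{lema4a} applied with $j$ replaced by $j-1$ identifies $\Img(\partial_{2j-1})\cong F_{j-1}/(F_{j-1}\cap F_1^\prime)$; the edge case $j=1$ is handled by direct inspection using $\HH_0(\cG^2_\BB)=\BB/(f_1,f_2)$, $F_0=\BB$, and the fact that $\partial_1$ sends $b_1\mapsto b_1\begin{pmatrix}c_{21}\cr c_{22}\end{pmatrix}$, whose kernel in $\BB$ is exactly $F_1^\prime$ by the definition of $F_1^\prime$. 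By exactness $\Img(\sigma_{2j}^*)=\Ker(\partial_{2j-1})$, so $\partial_{2j-1}$ descends to a surjection $\partial:\HH_{2j-2}(\cG^2_\BB)\to F_{j-1}/(F_{j-1}\cap F_1^\prime)$ with kernel exactly $\Img(\sigma_{2j}^*)$, producing the left end of (\ref{HomGob5}).

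At the right end, Lemma \ref{lema4a} directly identifies $\Img(\partial_{2j+1})=\frac{F_j}{F_j\cap F_1^\prime}\begin{pmatrix}c_{21}\cr c_{22}\end{pmatrix}\subset\HH_{2j+1}(\cG^1_\BB)$. Exactness gives $\Ker(i_{2j+1})=\Img(\partial_{2j+1})$, so $i_{2j+1}$ factors as an injection out of the quotient $\HH_{2j+1}(\cG^1_\BB)/\Img(\partial_{2j+1})$, producing the right end of (\ref{HomGob5}). Splicing both modifications into the extracted segment yields exactly the exact sequence (\ref{HomGob5}). The only substantive input is Lemma \ref{lema4a}, which itself rests on the formula (\ref{frontera1}) for $\partial_{2k+1}$ and Proposition \ref{prop1}(a); everything else is diagram chasing, the sole point of care being the $j=1$ edge case on the left end, which lies outside the range of Lemma \ref{lema4a} as stated but is settled by direct inspection.
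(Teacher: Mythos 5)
Your proof is correct and follows essentially the same route as the paper: truncate the long exact sequence (\ref{HomGob3})--(\ref{HomGob4}), using Lemma \ref{lema4a} to identify $\Imag(\partial_{2j-1})$ with $\frac{F_{j-1}}{F_{j-1}\cap F_1^\prime}$ on the left and to replace $\HH_{2j+1}(\cG^1_\BB)$ by $\coker(\partial_{2j+1})$ on the right. Your explicit handling of the $j=1$ edge case (where $\partial_1(b_1)=b_1\bigl(\begin{smallmatrix}c_{21}\cr c_{22}\end{smallmatrix}\bigr)$ has kernel $F_1^\prime$ by the definition of $F_1^\prime$, so its image is $F_0/(F_0\cap F_1^\prime)$) is a detail the paper leaves implicit, and it checks out.
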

\begin{proof}
We are incorporating into the long exact sequence (\ref{HomGob3}) and (\ref{HomGob4})
the conclusion of Lemma \ref{lema4a}. On the righthand side we are putting
$\coker(\partial_{2j+1})$
and on the lefthand side we are putting  $\Imag(\partial_{2j-1})$.
\end{proof}

\subsection{The long exact sequence of hypercohomology of the dual small Gobelin}

Consider the short exact sequence of complexes obtained by applying the functor
$\Hom_\BB(*,\BB)$ and the long exact sequence of hypercohomology groups. Since $\BB$ is
a Gorenstein algebra of dimension 0, this long exact sequence is the dual of the
sequence (\ref{HomGob3}) and (\ref{HomGob4})
and in particular it contains the dual of (\ref{HomGob5}):

$$
0 \xrightarrow{}  [\frac{F_{j-1}}{F_{j-1}\cap F_1^\prime}]^*
\xrightarrow{}
\HH^{2j-2}(\cG^{2*}_\BB)
\xrightarrow{\sigma_{2j} }
\HH^{2j}(\cG^{2*}_\BB)
\xrightarrow{i_{2j}^*}
\HH^{2j}(\cG^{1*}_\BB)
\xrightarrow{\partial_{2j}^*}
$$
\begin{equation}\label{HomGob6}
\xrightarrow{\partial_{2j}^*}
\HH^{2j-1}(\cG^{2*}_\BB)
\xrightarrow{\sigma_{2j+1}}
\HH^{2j+1}(\cG^{2*}_\BB)
\xrightarrow{i_{2j+1}^*}
[\frac{\HH^{2j+1}(\cG^{1}_\BB)
}{\frac{F_j}{F_j \cap F_1^\prime}\begin{pmatrix}c_{21} \cr c_{22}\end{pmatrix}}]^*
\xrightarrow{}
0
\end{equation}

We want to compute
$$\Imag(\partial_{2j}^*) = \Ker (\sigma_{2j+1}) \simeq \coker(i_{2j}^*)$$

\begin{lema} \label{lema5} For $j\geq 1$ we have
$$\Imag(i_{2j}^*) = \pi_1^{-1}(L_j\cap L_1^\prime)$$
where $\pi_1$ is the projection to the first factor in (\ref{H2j})
and $L_1^\prime$ and $L_j$ are defined in (\ref{flag1}) and
(\ref{flag2}).
\end{lema}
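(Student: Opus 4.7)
The plan is to describe the cochain-level map underlying $i_{2j}^*$ and then read off its image by invoking Proposition~\ref{prop2} on the structure of cocycles of $\cG^{2*}_\BB$, combined with the description of $\HH^{2j}(\cG^{1*}_\BB)$ from Lemma~\ref{lema4}.

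First I would observe that since $\iota_{2j}\colon \BB^2\hookrightarrow \BB^{2j+1}$ sends $(a,b)^t$ to the vector with $a$ in position $1$, $b$ in position $j+1$, and zeros elsewhere, the dual map at the cochain level is the projection
$\iota_{2j}^*\colon \BB^{2j+1}\to\BB^2$, $(a_1,\dots,a_j,b_1,\dots,b_{j+1})^t\mapsto (a_1,b_1)^t$. Consequently, for a cocycle $\beta=(a_1,\dots,a_j,b_1,\dots,b_{j+1})^t$ of $\cG^{2*}_\BB$ one has $i_{2j}^*[\beta]=[(a_1,b_1)]\in \HH^{2j}(\cG^{1*}_\BB)$ and $\pi_1 i_{2j}^*[\beta]=[a_1]\in \BB/(f_1,f_2)$. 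For the inclusion $\Imag(i_{2j}^*)\subseteq \pi_1^{-1}(L_j\cap L_1^\prime)$, Proposition~\ref{prop2}(a) states that the extremal coordinates of any cocycle lie in $L_1\cap L_j^\prime$ and $L_j\cap L_1^\prime$; after identifying these via the $\kappa$-symmetry spelled out in the proof of Proposition~\ref{prop2} (under which the flags built from (\ref{syzygyreduced}) are unchanged), the claim follows.

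For the reverse inclusion, I would use the short exact sequence of Lemma~\ref{lema4} to split a class $\xi\in \pi_1^{-1}(L_j\cap L_1^\prime)$ into a contribution from $\ker(\pi_1)=i_2(\Ann_\BB(f_1,f_2)/(\cdots))$ and a contribution whose $\pi_1$-image lies in $L_j\cap L_1^\prime$ modulo $(f_1,f_2)$. An element of the kernel, represented by $(0,c)^t$ with $c\in \Ann_\BB(f_1,f_2)$, lifts canonically to $\beta\in \BB^{2j+1}$ obtained by placing $c$ in position $j+1$ and zeros elsewhere; each cocycle relation of Proposition~\ref{prop2} then collapses to $c(f_1,f_2)^t=0$, which is the defining property of $c$. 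For the other contribution, the coboundary freedom in $\HH^{2j}(\cG^{1*}_\BB)$ (captured by $\langle(-f_2,c_{11})^t,(f_1,c_{12})^t\rangle$) allows one to pick a representative $(a_1,b_1)$ with $a_1\in L_j\cap L_1^\prime$ itself, and Proposition~\ref{prop2}(b) then produces a cocycle $\beta\in\BB^{2j+1}$ with this first coordinate, with the remaining coordinates built by successively solving the pair of equations (\ref{cocycle}) as in the proof of Proposition~\ref{prop2}.

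The main obstacle is ensuring that the $b_1$-coordinate of $\iota_{2j}^*(\beta)$ matches the prescribed $b_1$ rather than differing by an element of $\Ann_\BB(f_1,f_2)$: the first cocycle equation $a_1(-c_{12},c_{11})^t+b_1(f_1,f_2)^t=0$ only determines $b_1$ modulo $\Ann_\BB(f_1,f_2)$, and any discrepancy must be absorbed by adjusting the kernel contribution $(0,c)^t$ constructed in the first half of the argument. Tracking this compensation through the two parts of the decomposition, and verifying that the composite assembles to a bona fide preimage of $\xi$ under $i_{2j}^*$, is the technical heart of the proof.
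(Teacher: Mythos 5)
Your chain-level description of $i_{2j}^*$ as the projection $(a_1,\dots,a_j,b_1,\dots,b_{j+1})^t\mapsto(a_1,b_1)^t$ and the overall strategy (Proposition~\ref{prop2} for the structure of cocycles, Lemma~\ref{lema4} for the target) is exactly the paper's route, but two points break down. First, the appeal to the $\kappa$-symmetry to identify $L_1\cap L_j^\prime$ with $L_j\cap L_1^\prime$ is not valid: the involution $\kappa$ of (\ref{kappa}) only transports the picture from $H_1(\cK_\BB(f_1,f_2))$ to $H^1(\cK_\BB(f_1,f_2)^*)$ and leaves each of the flags $L_*$ and $L_*^\prime$ where it is; interchanging the primed and unprimed flags corresponds to interchanging the two syzygies, a different operation, and in general $L_1\cap L_j^\prime\neq L_j\cap L_1^\prime$. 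What Proposition~\ref{prop2}(a),(b) actually give is that the admissible first coordinates $a_1$ of cocycles in $\BB^{2j+1}$ are exactly the elements of $L_1\cap L_j^\prime$, so the identity that can be proved is $\Imag(i_{2j}^*)=\pi_1^{-1}(L_1\cap L_j^\prime)$; the primes in the printed statement (and in a few lines of the surrounding proofs) are transposed, as one can check against the term $\Ann_\BB(L_j^\prime\cap L_1)/\Ann_\BB(L_1)$ in the statement of Theorem~\ref{CDos}. You should therefore not try to equate the two intersections, but prove the version dictated by Proposition~\ref{prop2}.

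Second, the surjectivity direction is left genuinely unfinished, and the ``compensation'' you single out as the technical heart disappears if one argues as the paper does: the coordinate $b_1$ occurs in exactly one cocycle equation, namely $a_1(-c_{12},c_{11})^t+b_1(f_1,f_2)^t=0$, and this is verbatim the cocycle condition for $(a_1,b_1)$ in $\tot(\cG^{1*}_\BB)$. Given a class $\xi$ with $\pi_1(\xi)$ in the prescribed ideal, any cocycle representative $(a_1,b_1)$ automatically has $a_1$ in that ideal, because coboundaries change $a_1$ only by elements of $(f_1,f_2)$, which lies in $L_1\cap L_1^\prime$ (see the Remark in Section~\ref{PropertiesFlag}); then in the construction of Proposition~\ref{prop2}(b) one keeps this $b_1$ unchanged, since all the remaining cocycle equations displayed in the proof of Proposition~\ref{prop2} involve only $a_1,\dots,a_j$ and $b_2,\dots,b_{j+1}$. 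This yields a cocycle of $\cG^{2*}_\BB$ mapping under $\iota^*$ exactly to $(a_1,b_1)$, so no splitting through the exact sequence (\ref{H2j}) and no adjustment by a kernel element $(0,c)^t$ is needed; as written, your argument stops precisely at the step that has to be carried out.
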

\begin{proof}
We have $i_{2j}^*(a_1,\ldots,b_1,\ldots,b_{j+1})^t=(a_1,b_1)^t$.
By Proposition \ref{prop2} the only condition on $a_1$ so that it is part of a
cocycle is $a_1 \in L_j \cap L_1^\prime$, and since the  only condition on $b_1$:
$$ a_1\begin{pmatrix}-c_{12} \cr c_{11}\end{pmatrix}+b_1
\begin{pmatrix}f_{1} \cr f_{2}\end{pmatrix}
= \begin{pmatrix} 0 \cr 0\end{pmatrix}$$
is the same in $\HH^{2j}(\cG^{2*}_\BB)
$ and in $\HH^{2j}(\cG^{1*}_\BB)$, we have that all admissible $b_1$ are allowed,
so the lemma follows from Proposition \ref{lema4}.
\end{proof}

\subsection{Proof of Theorem \ref{CDos}}

\begin{proof}
Now that we have computed $\Imag(i_{2j}^*)$, we may split the exact sequence
(\ref{HomGob6}) into 2 exact sequences:

$$
0 \xrightarrow{}  [\frac{F_{j-1}}{F_{j-1}\cap F_1^\prime}]^*
\xrightarrow{}
\HH^{2j-2}(\cG^{2*}_\BB)
\xrightarrow{\sigma_{2j} }
\HH^{2j}(\cG^{2*}_\BB)
\xrightarrow{i_{2j}^*}
\pi_1^{-1}(L_j \cap L_1^\prime)
\xrightarrow{}
0
$$
\begin{equation}\label{HomGob8}
0 \xrightarrow{}
\frac{\HH^{2j}(\cG^{1*}_\BB)}{\pi_1^{-1}(L_j \cap L_1^\prime)}
\xrightarrow{\partial_{2j}^*}
\HH^{2j-1}(\cG^{2*}_\BB)
\xrightarrow{\sigma_{2j+1}}
\HH^{2j+1}(\cG^{2*}_\BB)
\xrightarrow{i_{2j+1}^*}
[\frac{\HH^{2j+1}(\cG^{1}_\BB)
}{\frac{F_j}{F_j \cap F_1^\prime}\begin{pmatrix}c_{21} \cr c_{22}\end{pmatrix}}]^*
\xrightarrow{}
0
\end{equation}

\vskip 3mm
Consider the first term in (\ref{HomGob8}).
On using the definition of $L_1^\prime$ in (\ref{flag2}) and
Lemma \ref{lema4}, the projection
$\pi_1$ to the first component (\ref{pi1}) gives
an exact sequence:

$$\HH^{2j}(\cG^{1*}_\BB) \xrightarrow{\pi_1} L_1^\prime \longrightarrow 0$$
and hence it induces an isomorphism
$$\frac{\HH^{2j}(\cG^{1*}_\BB)}{\pi_1^{-1}(L_j \cap L_1^\prime)}
 \xrightarrow{\pi_1} \frac{L_1^\prime}{L_j \cap L_1^\prime}
$$

We may then write the exact sequence (\ref{HomGob8}) as:
\begin{equation}\label{HomGob8bis}
0 \xrightarrow{}
 \frac{ L_1^\prime}{ L_j \cap  L_1^\prime}
\xrightarrow{}
\HH^{2j-1}(\cG^{2*}_\BB)
\xrightarrow{\sigma_{2j+1}}
\HH^{2j+1}(\cG^{2*}_\BB)
\xrightarrow{i_{2j+1}^*}
[\frac{\HH^{2j+1}(\cG^{1}_\BB)
}{\frac{F_j}{F_j \cap F_1^\prime}\begin{pmatrix}c_{21} \cr c_{22}\end{pmatrix}}]^*
\xrightarrow{}
0
\end{equation}
Taking duals of the exact sequence
$$0 \longrightarrow  L_j\cap L_1^\prime
\longrightarrow L_1^\prime \longrightarrow \frac{ L_1^\prime}{ L_j \cap L_1^\prime}
\longrightarrow 0$$
and using the non-degenerate bilinear pairing
(\ref{d1}) of the Gorenstein algebra $\BB$,
we obtain the exact sequence
$$0 \longleftarrow \frac{\BB}{\Ann_\BB(L_j\cap L_1^\prime)}
 \longleftarrow \frac{\BB}{\Ann_\BB(L_1^\prime)}
 \longleftarrow [\frac{ L_1^\prime}{L_j \cap  L_1^\prime}]^*
\longleftarrow 0$$
hence
$$[\frac{L_1^\prime}{ L_j \cap  L_1^\prime}]^*
= \frac{\Ann_\BB( L_j\cap L_1^\prime)}{\Ann_\BB( L_1^\prime)}.$$
Taking duals of the sequence (\ref{HomGob8bis}), we
obtain:

\begin{equation}\label{HomGob8a}
0 \xleftarrow{}
\frac{\Ann_\BB( L_j\cap L_1^\prime)}{\Ann_\BB(L_1^\prime)}
\xleftarrow{}
\HH_{2j-1}(\cG^{2}_\BB)
\xleftarrow{\sigma_{2j+1}^*}
\HH_{2j+1}(\cG^{2}_\BB)
\xleftarrow{i_{2j+1}}
 \frac{\HH_{2j+1}(\cG^{1}_\BB)
}{\frac{F_j}{F_j \cap F_1^\prime}\begin{pmatrix}c_{21} \cr c_{22}\end{pmatrix}}
\xleftarrow{}
0
\end{equation}

Using the exactness of the sequences (\ref{HomGob5}) and (\ref{HomGob8a}), we
obtain the exact sequence

$$
0 \xleftarrow{}  \frac{F_{j-1}}{F_{j-1}\cap F_1^\prime}
\xleftarrow{}
\HH_{2j-2}(\cG^{2}_\BB)
\xleftarrow{\sigma_{2j}^* }
\HH_{2j}(\cG^{1}_\BB)
\xleftarrow{i_{2j}}
\frac{\HH_{2j}(\cG^{1}_\BB)}{\frac{\Ann_\BB( L_j\cap  L_1^\prime)}{\Ann_\BB( L_1^\prime)}
\begin{pmatrix}1 \cr 0\end{pmatrix}}
\xleftarrow{}
0
$$
\end{proof}

\begin{Cor}\label{Cor1}
Let
 $$ \begin{pmatrix}c_{11} \cr c_{12}\end{pmatrix},
\begin{pmatrix}c_{21} \cr c_{22}\end{pmatrix};
\begin{pmatrix}d_{11}   \cr d_{12}  \end{pmatrix},
\begin{pmatrix}d_{11}   \cr d_{12}  \end{pmatrix} \in
<\begin{pmatrix}f_{1} \cr f_{2}\end{pmatrix}>^\perp\subset\BB^2
\ \ , \ \
\begin{pmatrix}c_{11} \cr c_{12}\end{pmatrix} \equiv
\begin{pmatrix}d_{11}  \cr d_{12}  \end{pmatrix},
\begin{pmatrix}c_{21} \cr c_{22}\end{pmatrix} \equiv
\begin{pmatrix}d_{21}   \cr d_{22}  \end{pmatrix} \in
H_{1}(\cK_\BB(f_1,f_2))
$$
be 2 pairs of syzygies which are homologous, then the corresponding small Gobelins constructed with them
have equidimensional  hyperhomology groups.
\end{Cor}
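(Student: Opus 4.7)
The strategy is to apply Theorem \ref{CDos} and verify that every ingredient on the outside of the exact sequences (\ref{teo1})--(\ref{teo2}) depends only on the classes of the syzygies in $H_1(\cK_\BB(f_1,f_2))$, so that the dimensions of $\HH_j(\cG^2_\BB)$ are determined from that class data alone. Throughout, write $\begin{pmatrix}d_{11}\cr d_{12}\end{pmatrix}=\begin{pmatrix}c_{11}\cr c_{12}\end{pmatrix}+\mu\begin{pmatrix}-f_2\cr f_1\end{pmatrix}$ and $\begin{pmatrix}d_{21}\cr d_{22}\end{pmatrix}=\begin{pmatrix}c_{21}\cr c_{22}\end{pmatrix}+\nu\begin{pmatrix}-f_2\cr f_1\end{pmatrix}$ for some $\mu,\nu\in\BB$.

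First I would observe that the flags $L_j,F_j,L_j',F_j'$ of (\ref{flag1})--(\ref{flag2}) are constructed by iterated colon ideals inside $H_1(\cK_\BB(f_1,f_2))$ using only the two homology classes; they are therefore literally unchanged when $c_{ij}$ is replaced by $d_{ij}$, and so are the quotients $F_{j-1}/(F_{j-1}\cap F_1')$, $F_j/(F_j\cap F_1')$ and $\Ann_\BB(L_j'\cap L_1)/\Ann_\BB(L_1)$ which sit on the outside of (\ref{teo1})--(\ref{teo2}). The base-case dimensions $\dim\HH_0(\cG^2_\BB)=\dim\BB/(f_1,f_2)$ and $\dim\HH_1(\cG^2_\BB)=\dim H_1(\cK_\BB(f_1,f_2))-\dim<(c_{11},c_{12})^t,(c_{21},c_{22})^t>_{H_1}$ likewise depend only on the classes.

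Next I would verify that $\dim\HH_j(\cG^1_\BB)$ depends only on the class of the first syzygy. For $j=0,1$ this is read off directly from Proposition \ref{lema2}. For $j\geq 2$, Proposition \ref{lema2} asserts that $\dim\HH_{2j}(\cG^1_\BB)=\dim\HH_{2j+1}(\cG^1_\BB)$, so it suffices to handle the odd formula. There the denominator $<(c_{11},c_{12})^t>_{H_1(\cK_\BB(f_1,f_2))}$ is manifestly a function of the class of $(c_{11},c_{12})^t$ in $H_1$, while a one-line calculation shows that the boundary shift $(c_{11},c_{12})^t\mapsto(c_{11},c_{12})^t+\mu(-f_2,f_1)^t$ sends $(-c_{12},c_{11})^t$ to $(-c_{12},c_{11})^t-\mu(f_1,f_2)^t$, and $\mu(f_1,f_2)^t$ is a boundary in the dual Koszul complex $\cK_\BB(f_1,f_2)^*$. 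Hence the class of $(-c_{12},c_{11})^t$ in $H^1(\cK_\BB(f_1,f_2)^*)$, and so its $\cdot_L$-orthogonal in $H_1(\cK_\BB(f_1,f_2))$, is preserved. This intertwining of Koszul and dual Koszul boundaries by the involution $\kappa$ of (\ref{kappa}) is the one real step that requires a check; everything else is dimension bookkeeping.

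Finally, additivity of dimensions in the exact sequence (\ref{teo1}) gives the recursion
$$\dim\HH_{2j}(\cG^2_\BB)=\dim\HH_{2j-2}(\cG^2_\BB)+\dim\HH_{2j}(\cG^1_\BB)-\dim\frac{\Ann_\BB(L_j'\cap L_1)}{\Ann_\BB(L_1)}-\dim\frac{F_{j-1}}{F_{j-1}\cap F_1'},$$
and (\ref{teo2}) the analogous recursion
$$\dim\HH_{2j+1}(\cG^2_\BB)=\dim\HH_{2j-1}(\cG^2_\BB)+\dim\HH_{2j+1}(\cG^1_\BB)-\dim\frac{\Ann_\BB(L_j'\cap L_1)}{\Ann_\BB(L_1)}-\dim\frac{F_j}{F_j\cap F_1'}.$$
By the first two steps both right-hand sides take the same value for the $c$-Gobelin and the $d$-Gobelin, so induction on $j$ starting from the base cases $j=0,1$ shows that $\dim\HH_j(\cG^2_\BB)$ is unchanged under homologous replacement of the pair of syzygies.
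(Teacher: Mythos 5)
Your proposal is correct and follows essentially the same route as the paper: the paper's own (one-sentence) proof likewise observes that the flags (\ref{flag1})--(\ref{flag2}) depend only on the classes in $H_1(\cK_\BB(f_1,f_2))$ and then invokes Theorem \ref{CDos}. You additionally spell out what the paper leaves implicit — that the dimensions of $\HH_j(\cG^1_\BB)$ and of the outer terms in (\ref{teo1})--(\ref{teo2}) are also class-invariant (via the $\kappa$/dual-Koszul check from Proposition \ref{lema2}) — and then run the dimension-count induction, which is a harmless elaboration rather than a different method.
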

\begin{proof}
  The flags constructed for each pair of syzygies are the same, since they depend only
  on their homology class in  $\HH_{1}(\cK_\BB(f_1,f_2))$, and hence the hyperhomology
  groups are equidimensional by Theorem \ref{CDos}.
\end{proof}

\section{Properties of the flags} \label{PropertiesFlag}

\begin{lema}\label{PuntoEstable}$ $\\
\hbox{{\rm a)}} Let  $I\subset \BB$ be an ideal such that
$I =(I\bigl( \begin{smallmatrix}  c_{21} \cr c_{22} \end{smallmatrix} \bigr)  :
\bigl( \begin{smallmatrix} c_{11} \cr c_{12}\end{smallmatrix} \bigr) )_{H_1 (\cK_{\BB}(f_1 , f_2))}.$
Then for every $j\geq 0$ we have $L_j\subset  I\subset F_j$  . $F_{\infty}$ is the largest of such class of ideals and $L_\infty$ is the smallest.
\\
\\
\hbox{{\rm b)}} Let $J\subset \BB$ be an ideal such that
$J=( J\bigl( \begin{smallmatrix}  c_{11} \cr c_{12}\end{smallmatrix} \bigr)  :
\bigl( \begin{smallmatrix} c_{21} \cr c_{22}\end{smallmatrix} \bigr)
)_{H_1(\cK_\BB(f_1,f_2))}. $
Then for every $j\geq 0$ we have  $L^\prime_j\subset  J\subset F^\prime_j$.
  $F^\prime_{\infty}$ is the largest of such class of ideals and $L_\infty^\prime$ is the smallest.
\end{lema}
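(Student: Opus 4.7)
The plan is to recognise both parts of Lemma \ref{PuntoEstable} as statements about the extremal fixed points of a single monotone operator acting on the lattice of ideals of $\BB$, and to exploit the finite $\KK$-dimension of $\BB$ to guarantee that the extremal fixed points exist.

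First I would introduce, for part a), the operator
$$ T(I) := \bigl( I \bigl(\begin{smallmatrix} c_{21} \cr c_{22} \end{smallmatrix}\bigr) : \bigl(\begin{smallmatrix} c_{11} \cr c_{12} \end{smallmatrix}\bigr) \bigr)_{H_1(\cK_\BB(f_1,f_2))}$$
on ideals of $\BB$, so that by the recursive definition of the flag one has $L_j = T^j(0)$ and $F_j = T^j(\BB)$, and the fixed points of $T$ are exactly the ideals $I$ appearing in the hypothesis. The engine of the whole argument is monotonicity of $T$: if $I \subset J$ then $I\bigl(\begin{smallmatrix} c_{21} \cr c_{22} \end{smallmatrix}\bigr) \subset J\bigl(\begin{smallmatrix} c_{21} \cr c_{22} \end{smallmatrix}\bigr)$ as submodules of $H_1(\cK_\BB(f_1,f_2))$, and the colon operation preserves inclusions. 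This is formal from the definition of a colon ideal, together with the fact that passing to the quotient by the boundary submodule generated by $\bigl(\begin{smallmatrix} -f_2 \cr f_1 \end{smallmatrix}\bigr)$ is itself monotone.

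Once monotonicity is established, the sandwich $L_j \subset I \subset F_j$ for any fixed point $I$ of $T$ drops out by induction on $j$: the base case $j=0$ is $0 \subset I \subset \BB$, and the inductive step applies $T$ to $L_{j-1} \subset I \subset F_{j-1}$ to produce $L_j \subset T(I) \subset F_j$, which is what one wants because $T(I)=I$ by hypothesis. To handle $L_\infty$ and $F_\infty$ I would use that $\BB$ has finite $\KK$-dimension, so that the ascending chain $L_0 \subset L_1 \subset \cdots$ stabilises at some step $N$; then $L_\infty := L_N$ is a well defined ideal that satisfies $T(L_\infty) = L_{N+1} = L_N = L_\infty$, so it is itself a fixed point. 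The descending chain $F_0 \supset F_1 \supset \cdots$ is handled identically to produce $F_\infty$. Pushing the sandwich to the limit, every fixed point $I$ satisfies $L_\infty \subset I \subset F_\infty$, which identifies $L_\infty$ as the smallest and $F_\infty$ as the largest fixed point. Part b) is formally identical, with the roles of the columns $\bigl(\begin{smallmatrix} c_{11} \cr c_{12} \end{smallmatrix}\bigr)$ and $\bigl(\begin{smallmatrix} c_{21} \cr c_{22} \end{smallmatrix}\bigr)$ interchanged throughout.

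I do not foresee any serious obstacle; the only point that requires a moment of care is the verification that $T$ is genuinely monotone once one works in the quotient $H_1(\cK_\BB(f_1,f_2))$ rather than in $\BB^2$, but this is a mechanical consequence of the definition of the colon operation inside a quotient module. All the rest is lattice theory together with the descending/ascending chain condition inherited from $\dim_\KK \BB <\infty$.
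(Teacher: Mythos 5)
Your proposal is correct and follows essentially the same route as the paper: the authors also view the colon construction as the monotone operator $\phi_1^{-1}\phi_2$ (with $\phi_i(a)=a\bigl(\begin{smallmatrix} c_{i1} \cr c_{i2}\end{smallmatrix}\bigr)$ in $H_1(\cK_\BB(f_1,f_2))$) and obtain the sandwich $L_j\subset I\subset F_j$ by the same induction. Your additional remarks on stabilization of the chains via $\dim_\KK\BB<\infty$ and on $L_\infty$, $F_\infty$ being fixed points correspond to the paper's definition of $L_\infty$, $F_\infty$ and the remark following the lemma, so nothing is missing.
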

\begin{proof}
Let $\phi_1,\phi_2:\BB\to H_1(\cK_\BB(f_1,f_2))$ be the morphisms defined by $$\phi_1(a)= a\bigl(\begin{smallmatrix}  c_{11} \cr c_{12}\end{smallmatrix}\bigr), \hskip 1cm \hbox{and}\hskip 1cm 
\phi_2(a)= a\bigl(\begin{smallmatrix}  c_{21} \cr c_{22}\end{smallmatrix}\bigr).$$
Note that $$F_j=\phi_1^{-1}\phi_2(F_{j-1}), \hskip 1cm
L _j=\phi_1^{-1}\phi_2(L _{j-1}), \hskip 1cm \hbox{and}  \hskip 1cm
 0 = L _0 \subset I=\phi_1^{-1}\phi_2(I)\subset F_0=\BB.$$
We will prove the statement by induction. Clearly, $I\subset \BB=F_0.$ If $I\subset F_{j-1}$, then
$I=\phi_1^{-1}\phi_2(I)\subseteq \phi_1^{-1}\phi_2(F_{j-1})=F_{j}$. Hence, $I\subset F_j$ for every $j\geq 0$ and the result follows.

The proof for $L _j$ is analogous because $L _0=0\subset I.$
If $L _{j-1}\subset I$, then
$L _j=\phi_1^{-1}\phi_2(L _{j-1})\subset I=\phi_1^{-1}\phi_2(I)$. Hence, $L_j  \subset I$ for every $j\geq 0$.
\end{proof}

\begin{Rem}
Lemma  \ref{PuntoEstable}   implies that the chains of ideals stabilize at the first place where there is no  proper containment: If $F_j=F_{j+1}$ then $F_\infty=F_j.$
Since $f_1$ and $f_2$ belong to the ideals $F_j,F^\prime_j,L_j$ and $L^\prime_j$ for $j\geq 1,$ we have that the length of all the flags of ideals is smaller than
or equal to $\dim_\KK \BB/(f_1,f_2).$
\end{Rem}

\begin{lema}\label{TL}
Let $T_1,T_2:V\to W$ be linear transformations of finite dimensional $\KK$-vector spaces and $V_2\subset V_1$ be linear subspaces of $V$, then
$$\hbox{{\rm codim}}(V_2,V_1) \geq \hbox{{\rm codim}}(T_1^{-1}T_2(V_2),T_1^{-1}T_2(V_1)).$$
\end{lema}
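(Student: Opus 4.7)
The plan is to reduce the inequality to two elementary facts about linear maps and take their composition. Set $A := T_2(V_1)$ and $B := T_2(V_2)$, so $B \subset A \subset W$, and observe that the statement is really a combination of the following two claims, applied in sequence.

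First, I would show that applying $T_2$ cannot increase codimension: $\codim(B, A) \leq \codim(V_2, V_1)$. This is immediate because the restriction $T_2|_{V_1} : V_1 \to A$ is surjective by the definition of $A$, and it sends $V_2$ onto $B$, so the induced map $V_1/V_2 \twoheadrightarrow A/B$ is surjective.

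Second, I would show that pulling back along $T_1$ cannot increase codimension either: for any pair of subspaces $B \subset A$ of $W$,
$$\codim(T_1^{-1}(B),\, T_1^{-1}(A)) \leq \codim(B, A).$$
Indeed, the restriction $T_1 : T_1^{-1}(A) \to A$ sends $T_1^{-1}(B)$ exactly onto (a subspace of) $B$, and its preimage of $B$ is precisely $T_1^{-1}(B)$. Therefore the induced map on quotients $T_1^{-1}(A)/T_1^{-1}(B) \hookrightarrow A/B$ is injective, which gives the desired dimension inequality.

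Chaining the two inequalities with $A = T_2(V_1)$ and $B = T_2(V_2)$ yields
$$\codim(T_1^{-1}T_2(V_2),\, T_1^{-1}T_2(V_1)) \leq \codim(T_2(V_2),\, T_2(V_1)) \leq \codim(V_2, V_1),$$
which is the claim. There is no real obstacle here; the only thing one must be careful about is stating the two intermediate claims with the correct direction of the induced map (surjection for the image step, injection for the preimage step), so that the dimension inequalities go the right way.
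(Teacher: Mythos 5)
Your proof is correct and matches the paper's argument essentially verbatim: the paper likewise obtains the surjection $V_1/V_2 \twoheadrightarrow T_2(V_1)/T_2(V_2)$ and the injection $T_1^{-1}T_2(V_1)/T_1^{-1}T_2(V_2) \hookrightarrow T_2(V_1)/T_2(V_2)$, then chains the two dimension inequalities. Nothing further is needed.
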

\begin{proof}
We have an induced well-defined surjective map 
$T_2:\frac{V_1}{V_2}\to \frac{T_2(V_1)}{T_2(V_2)}.$
Then, 
\begin{equation}\label{EqCodim1}
\dim_{\KK} V_{1} - \dim_{\KK}V_{2} \geq \dim_\KK T_2(V_1)-\dim_\KK T_2(V_2).
\end{equation}
We also have an induced well-defined injective  map
$T_1: \frac{T^{-1}_1 T_2(V_1)}{T^{-1}_1 T_2(V_2)} \to \frac{T_2(V_1)}{T_2(V_2)} ,$
which gives
\begin{equation}\label{EqCodim2}
\dim_\KK T_2(V_1)-\dim_\KK T_2(V_2)\geq 
\dim_\KK T^{-1}_1 T_2(V_1)-\dim_\KK T^{-1}_1 T_2(V_2).
\end{equation}
The desired inequality follows from (\ref{EqCodim1}) and (\ref{EqCodim2}).
\end{proof}

\begin{prop}\label{graduado} For $j\geq 0$ we have:
  $$\dim_{\KK}\frac{F_{j}}{F_{j+1}}\geq \dim_{\KK} \frac{F_{j+1}}{F_{j+2}},\hskip 1cm  \hskip 1cm
  \dim_{\KK}\frac{L_{j+2}}{L_{j+1}}\leq \dim_{\KK} \frac{L_{j+1}}{L_{j}}$$
 $$\dim_{\KK}\frac{F^\prime_{j}}{F^\prime_{j+1}}\geq \dim_{\KK} \frac{F^\prime_{j+1}}{F^\prime_{j+2}},\hskip 1cm  \hbox{and}\hskip 1cm
  \dim_{\KK}\frac{L^\prime_{j+2}}{L^\prime_{j+1}}\leq \dim_{\KK} \frac{L^\prime_{j+1}}{L^\prime_{j}}.$$
\end{prop}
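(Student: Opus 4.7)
\textbf{Proof proposal for Proposition \ref{graduado}.}

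The plan is to apply Lemma \ref{TL} directly, using the maps $\phi_1,\phi_2 \colon \BB \to H_1(\cK_\BB(f_1,f_2))$ defined by multiplication with the two syzygies (as already introduced in the proof of Lemma \ref{PuntoEstable}). Recall that by definition of the flags we have
\[
F_{j+1} = \phi_1^{-1}\phi_2(F_j), \qquad L_{j+1} = \phi_1^{-1}\phi_2(L_j),
\]
and the analogous identities for the primed flags with the roles of $\phi_1$ and $\phi_2$ interchanged.

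For the statement about $F_j$, I would apply Lemma \ref{TL} with $T_1 = \phi_1$, $T_2 = \phi_2$, $V_1 = F_j$ and $V_2 = F_{j+1}$ (noting $F_{j+1} \subset F_j$, which holds since the flag is descending by Lemma \ref{PuntoEstable} together with the fact that $F_0 = \BB$). Then $\phi_1^{-1}\phi_2(V_1) = F_{j+1}$ and $\phi_1^{-1}\phi_2(V_2) = F_{j+2}$, and the lemma yields
\[
\dim_\KK \frac{F_j}{F_{j+1}} = \mathrm{codim}(F_{j+1}, F_j) \geq \mathrm{codim}(F_{j+2}, F_{j+1}) = \dim_\KK \frac{F_{j+1}}{F_{j+2}}.
\]
For the statement about $L_j$, I would apply Lemma \ref{TL} with the same $T_1, T_2$ but with $V_1 = L_{j+1}$ and $V_2 = L_j$ (using $L_j \subset L_{j+1}$ from the ascending flag). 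This gives $\phi_1^{-1}\phi_2(V_1) = L_{j+2}$ and $\phi_1^{-1}\phi_2(V_2) = L_{j+1}$, hence
\[
\dim_\KK \frac{L_{j+1}}{L_j} \geq \dim_\KK \frac{L_{j+2}}{L_{j+1}}.
\]

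The primed versions follow by the exact same argument, simply swapping the roles of $\phi_1$ and $\phi_2$, since the primed flags are constructed by this interchange. I do not anticipate any serious obstacle: the only subtlety is to verify that $\phi_1,\phi_2$ and the flags are well-defined on $\BB$ (the defining colon operations make sense on $\BB$ because the representatives $(c_{i1},c_{i2})^t$ land in the Koszul homology of $\BB$), and to check the inclusions $F_{j+1} \subset F_j$ and $L_j \subset L_{j+1}$, but both are immediate consequences of Lemma \ref{PuntoEstable} applied to $I = F_\infty$ and $I = L_\infty$ respectively, or more directly from the inductive construction of the flags.
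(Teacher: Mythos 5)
Your proposal is correct and matches the paper's own argument: the paper proves this by introducing the same maps $\phi_1,\phi_2$, noting $F_{j+1}=\phi_1^{-1}\phi_2(F_j)$, and applying Lemma \ref{TL}, with the primed flags handled by interchanging $\phi_1$ and $\phi_2$. Your treatment of the $L$-flags and the explicit check of the inclusions $F_{j+1}\subset F_j$, $L_j\subset L_{j+1}$ is just a slightly more complete write-up of the same idea.
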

\begin{proof}
Let $\phi_1:B\to H_1(\cK_\BB(f_1,f_2))$  be the morphism defined by $a\to a\bigl(\begin{smallmatrix}  c_{11} \cr c_{12}\end{smallmatrix}\bigr)$ and let
$\phi_2:B\to H_1(\cK_\BB(f_1,f_2))$ be the morphism defined by $a\to a\bigl(\begin{smallmatrix}  c_{21} \cr c_{22}\end{smallmatrix}\bigr)$.
We notice that $F_j=\phi_1^{-1}\phi_2 (F_{j-1})$.
Hence,
\begin{align*}
\dim_{\KK}F_{j+1}/F_{j} & =\dim_{\KK}F_{j+1}-\dim_\KK F_{j}
 = \dim_\KK \phi_1^{-1} \phi_2(F_{j})-\dim_\KK\phi_1^{-1}\phi_2(F_{j-1})\\
&\leq \dim_\KK F_j-\dim_\KK F_{j-1} .
\end{align*}
by Lemma \ref{TL}. Similarly, we obtain the statement about $F^\prime_j$ by interchanging the role of $\phi_1$ and $\phi_2.$
\end{proof}

\vskip 3mm
\section{Examples}

Recall from Section \ref{Sec3} the notation  $\mu := \dim_\KK\BB$ and $\nu := \dim_\KK\frac{\BB}{(f_1,f_2)}$.

 \subsection{Both syzygies are $0$}

If $\bigl( \begin{smallmatrix} c_{11} \cr c_{12}\end{smallmatrix} \bigr)
=\bigl( \begin{smallmatrix} c_{21} \cr c_{22}\end{smallmatrix} \bigr)
=\bigl( \begin{smallmatrix} 0 \cr 0\end{smallmatrix} \bigr) \in <\bigl( \begin{smallmatrix} f_1 \cr f_2 \end{smallmatrix} \bigr)>^\perp \subset \BB^2,
$ then both flags (\ref{flag1}) and (\ref{flag2}) are identical, and have the form:
$$
L_0   = 0 \subset L_1  =   \ldots =  L_\infty  = F_\infty =  \ldots  = F_0  =  \BB $$
From Proposition \ref{lema2} we obtain that for $j \geq1$:
\begin{equation}\label{a1}
\HH_0(\cG^1_\BB)= \frac{\BB}{(f_1,f_2)}\hskip 0.25cm,\hskip 0.25cm
\HH_{2j-1}(\cG^1_\BB)=  H_{ 1}(\cK_\BB(f_1,f_2)) \hskip 0.25cm,\hskip 0.25cm
\HH_{2j}(\cG^1_\BB)=  \Ann_\BB(f_1,f_2) \oplus \frac{\BB}{(f_1,f_2)},
\end{equation}
$\dim\HH_0(\cG^1_\BB) = \nu$ and $\dim\HH_j(\cG^1_\BB) = 2 \nu$  for $j\geq 1$,
on using (\ref{Koszul}).
On considering the exact sequences (\ref{teo1}) and (\ref{teo2}), we see that for $j\geq 1$ the left terms are 0,
and the right terms are $\HH_*(\cG^1_\BB)$. From this, one obtains readily
that   $\dim_\KK\HH_{j} (\cG^2_\BB)=  (j+1)\nu.$

Hence, the algorithm to construct a $j+2$ cycle from a $j$ cycle in this case,
imposes no conditions on the $j$-cycle, and we have freedom in choosing
in a $2\nu$-dimensional manner. This number bounds the dimension of all hyperhomology groups
of small Gobelins with $f_1$ and $f_2$ fixed, and with variable pair of syzygies.

If the syzygies satisfy
$\bigl( \begin{smallmatrix} c_{11} \cr c_{12}\end{smallmatrix} \bigr) = g_1 \bigl( \begin{smallmatrix} -f_2 \cr f_1 \end{smallmatrix} \bigr)\ \ ,\ \
\bigl( \begin{smallmatrix} c_{21} \cr c_{22}\end{smallmatrix} \bigr) = g_2 \bigl( \begin{smallmatrix} -f_2 \cr f_1 \end{smallmatrix} \bigr),
$
for $g_j \in \BB$, or equivalently   their class is 0 in $H_1(\cK_\BB(f_1,f_2))$,
then by Corolary \ref{Cor1} we have the same answere: $\dim_\KK\HH_{j} (\cG^2_\BB)=(j+1)\nu$.

\vskip 3mm

\subsection{One syzygy is $0$}

\begin{prop} Assume that $(c_{11},c_{12})^t=0\in H_1(\cK_\BB(f_1,f_2))$ and let
$\tau$ be the  dimension over $\KK$ of the submodule of $H_1(\cK_\BB(f_1,f_2))$
generated by $(c_{21},c_{22})^t$, then the associated flags (\ref{flag1}) and (\ref{flag2}) are:

\vskip 2mm
 \begin{equation} \label{ecua1}
  0 = L_0  \subset    L_1  = \ldots = L_\infty  = F_\infty = \ldots = F_0 =  \BB  \end{equation}
 \begin{equation} \label{ecua2}
  0 = L_0^\prime  \subset
  \bigl(\bigl( \begin{smallmatrix} 0 \cr 0\end{smallmatrix} \bigr) :  \bigl( \begin{smallmatrix} c_{21} \cr c_{22}\end{smallmatrix} \bigr)
  \bigr)_{H_1(\cK_\BB(f_1,f_2))}
    = L_1^\prime  = \ldots = L_\infty^\prime = F_\infty^\prime  = \ldots = \ldots = F_1^\prime \subset  F_0^\prime =  \BB, \end{equation}
and for $j\geq 0$ we have $\dim_\KK \HH_j(\cG^2_\BB) = \nu +   j(\nu-\tau)$.
\end{prop}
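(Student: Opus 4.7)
The plan is to compute the two flags directly from their inductive definitions and then use Theorem \ref{CDos} to deduce the dimension formula by induction on $j$.

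\textbf{Flags.} Following Lemma \ref{PuntoEstable}, let $\phi_i\colon\BB\to H_1(\cK_\BB(f_1,f_2))$ be the morphisms $\phi_1(a)=a(c_{11},c_{12})^t$ and $\phi_2(a)=a(c_{21},c_{22})^t$. The hypothesis gives $\phi_1\equiv0$, so $\phi_1^{-1}(S)=\BB$ for any subset $S\ni 0$. The recursions $L_j=\phi_1^{-1}\phi_2(L_{j-1})$ and $F_j=\phi_1^{-1}\phi_2(F_{j-1})$ immediately yield $L_j=F_j=\BB$ for all $j\geq 1$, which is (\ref{ecua1}). For the dual flag, $L_j'=\phi_2^{-1}\phi_1(L_{j-1}')=\phi_2^{-1}(0)=\ker\phi_2$ and similarly $F_j'=\ker\phi_2$ for $j\geq 1$; writing $M:=\ker\phi_2$ gives (\ref{ecua2}). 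The short exact sequence $0\to M\to\BB\xrightarrow{\phi_2}\langle(c_{21},c_{22})^t\rangle_{H_1}\to 0$ shows $\dim_\KK\BB/M=\tau$, and Gorenstein duality yields $\dim_\KK\Ann_\BB(M)=\tau$. Moreover $(f_1,f_2)\subseteq M$ since $f_i(c_{21},c_{22})^t=c_{2i}(-f_2,f_1)^t$ in $\BB^2$, and dually $\Ann_\BB(M)\subseteq\Ann_\BB(f_1,f_2)$.

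\textbf{Base cases and induction.} From Theorem \ref{CDos} the group $\HH_0(\cG^2_\BB)$ has dimension $\nu$, and $\HH_1(\cG^2_\BB)=H_1(\cK_\BB(f_1,f_2))/\langle(c_{21},c_{22})^t\rangle_{H_1}$ has dimension $2\nu-\tau$, matching $\nu+j(\nu-\tau)$ for $j=0,1$. For $j\geq 1$, plug the flag values into (\ref{teo1}) and (\ref{teo2}). The two outer terms $F_{j-1}/(F_{j-1}\cap F_1')=\BB/M$ and $\Ann_\BB(L_j'\cap L_1)/\Ann_\BB(L_1)=\Ann_\BB(M)$ both have dimension $\tau$. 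Using Proposition \ref{lema2} together with $(c_{11},c_{12})^t=0$ in $H_1$, I will check that $\dim_\KK\HH_j(\cG^1_\BB)=2\nu$ for every $j\geq 1$. Assuming also that the inner subspaces $\Ann_\BB(M)(1,0)^t\subset\HH_{2j}(\cG^1_\BB)$ and $(\BB/M)(c_{21},c_{22})^t\subset\HH_{2j+1}(\cG^1_\BB)$ have full dimension $\tau$, the exactness of the four-term sequences gives
$$\dim_\KK\HH_{2j}(\cG^2_\BB)-\dim_\KK\HH_{2j-2}(\cG^2_\BB)=(2\nu-\tau)-\tau=2(\nu-\tau),$$
and analogously in the odd case. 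Induction on $j$ then yields $\dim_\KK\HH_j(\cG^2_\BB)=\nu+j(\nu-\tau)$.

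\textbf{The main obstacle} is to verify that the map $\Ann_\BB(M)\to\HH_{2j}(\cG^1_\BB)$ sending $a\mapsto[(a,0)^t]$ is injective. Since $(c_{11},c_{12})^t=0$ in $H_1$, I may write $(c_{11},c_{12})^t=b_0(-f_2,f_1)^t$ in $\BB^2$ for some $b_0\in\BB$. If $(a,0)^t=C_\varphi(u,v)^t$ is a boundary, then $uf_1+vf_2=0$ and
$$a=-uc_{12}+vc_{11}=-b_0(uf_1+vf_2)=0,$$
so no nonzero element of $\Ann_\BB(M)\subseteq\Ann_\BB(f_1,f_2)$ can bound. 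The corresponding injectivity in the odd case is immediate from the definition of $M$, which completes the argument.
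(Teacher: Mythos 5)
Your proposal is correct and follows essentially the same route as the paper's own proof: compute both flags directly from the definitions (giving $L_j=F_j=\BB$ and $L_j'=F_j'=(0:(c_{21},c_{22})^t)_{H_1(\cK_\BB(f_1,f_2))}$ for $j\geq1$), then plug into the exact sequences (\ref{teo1})--(\ref{teo2}) of Theorem \ref{CDos} and count dimensions; the injectivity of $a\mapsto[(a,0)^t]$ that you single out as the main obstacle is indeed needed for the count and is only implicit in the paper, so verifying it via $(c_{11},c_{12})^t=b_0(-f_2,f_1)^t$ is a welcome addition. One small slip: the identity you use for $(f_1,f_2)\subseteq M$ should read $f_1(c_{21},c_{22})^t=c_{22}(-f_2,f_1)^t$ and $f_2(c_{21},c_{22})^t=-c_{21}(-f_2,f_1)^t$, which still yields the desired containment.
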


\begin{proof}
(\ref{ecua1})  and (\ref{ecua2}) follow  from the definitions (\ref{flag1})  and
   (\ref{flag2}).  In this case, we also have (\ref{a1}) and
   $\dim\HH_0(\cG^1_\BB) = \nu$ and $\dim\HH_j(\cG^1_\BB) = 2 \nu$  for $j\geq 1$.
For $j\geq1$ we have
$$\frac{F_{j-1}}{F_{j-1}\cap F_{ 1}}= \frac{\BB}
{\begin{pmatrix}0:
\begin{pmatrix} c_{21} \cr c_{22} \cr \end{pmatrix}\end{pmatrix}}_{H_1(\cK_\BB(f_1,f_2))},$$ which has dimension $\tau$.
Now $L_1^\prime = \BB$ and for $j\geq 1$ we have
$$ L_j =
\begin{pmatrix}0:\begin{pmatrix} c_{21} \cr c_{22} \cr \end{pmatrix} \end{pmatrix}_{H^1(\cK_\BB(f_1,f_2)^*)} \subset \BB,$$ which
has dimension $\mu - \tau$.
We have

$$\frac{\Ann_\BB(L_j \cap  L_1^\prime)}{\Ann_\BB(L_1^\prime)} = \Ann_\BB(L_j) $$
 which has dimension $\tau$.
 Hence in the exact sequences in Theorem \ref{CDos},
we see that the proces to construct the $j+2$ hyperhomology group from the $j$ consists
in imposing $\tau$ restrictions and then we have freedom in
choosing a $(2\nu - \tau)$-dimensional space;
so in all the dimension increases by $2(\nu - \tau)$.
\end{proof}

If we interchange the roles of the 2 syzygies (i.e. interchange the flags) we obtain for this case
\begin{equation}\label{a1a}
\dim_\KK\HH_0(\cG^1_\BB)= \nu,\hskip 0.25cm
\dim_\KK\HH_{ 1}(\cG^1_\BB)=  2\nu - \tau,  \hskip 0.25cm
\dim_\KK\HH_{j}(\cG^1_\BB)=  2\nu - 2 \tau\hskip 0.25cm\hbox{for}\hskip 0.25cm j\geq2,
\end{equation}
and the invariants
\begin{equation} \label{a1b}
\frac{F_{j-1}}{F_{j-1}\cap F_1^\prime} = \frac{\Ann_\BB(L_j^\prime)}{\Ann_\BB(L_1)} = 0.
\end{equation}
Hence, if we construct the $j+2$ hypercohomology classes in this way, there are no restrictions
on the $j$ cycle, and we have freedom in choosing of dimension $2(\nu - \tau)$ again.

\vskip 3mm

\subsection{One syzygy is in the module generated by the other}

If $\TAU{1}=g\TAU{2}$ for some   unit $g\in \BB,$ then  the corresponding flags are identical:
$$0 = L_0 \subset L_1 = (0:\TAU{1})= L_2= \ldots =
  L_\infty  \subset   F_\infty = \cdots = F_0 = \BB $$
 so that the invariantes (\ref{a1b}) are also 0, and the computations (\ref{a1a})
 are also valid in this case, and so we have  again have for the same reasons
 that  $\dim_\KK \HH_j(\cG^2_\BB) = \nu +   j(\nu-\tau)$.
 \vskip 3mm

\begin{prop}
If $\TAU{1}=g\TAU{2}$ for some non unit $g\in \BB,$ then  the corresponding flags are:
$$0 = L_0 \subset L_1 = ( 0: \TAU{1})\subset  L_2 = (L_1:g) \subset\ldots \subset
 L_j = (L_1:g^{j-1}) \subset L_\infty   =   F_\infty = \cdots = F_0 = \BB $$
{\small
$$
0 = L_0^\prime \subset L_1^\prime = \bigl( 0  : \bigl( \begin{smallmatrix} c_{21} \cr c_{22}\end{smallmatrix} \bigr) )_{H_1(\cK_\BB(f_1,f_2))}= \cdots = L_\infty^\prime =  F^\prime_\infty \subset <g^j>+L_1^\prime = F^\prime_j \subset \cdots \subset <g>+L_1^\prime =F^\prime_1 \subset F^\prime_0 = \BB$$}

The flags  $L_j$ and $F^\prime_j$ stabilize when $j=\hbox{min}\{i\in \NN: g^i\in L_1 \}$ and not before.
let $\tau_2$ be the  dimension over $\KK$ of the submodule of $H_1(\cK_\BB(f_1,f_2))$
generated by $(c_{21},c_{22})^t$, then
for $j\geq 0$ we have $\dim_\KK \HH_j(\cG^2_\BB) = \nu +   j(\nu-\tau_2)$.
\end{prop}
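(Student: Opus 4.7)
The plan is to compute each of the four flags explicitly from the hypothesis $\TAU{1} = g\TAU{2}$ and then feed the results into Theorem~\ref{CDos}. Introduce the $\BB$-linear maps $\phi_1,\phi_2 \colon \BB \to H_1(\cK_\BB(f_1,f_2))$ defined by $\phi_i(a) = a\TAU{i}$; the hypothesis reads $\phi_1 = \phi_2 \circ (g\,\cdot)$. Consequently $L_1' = \ker\phi_2$ sits inside $L_1 = \ker\phi_1$ (any $a$ killing $\TAU{2}$ also kills $\TAU{1} = g\TAU{2}$), and indeed $L_1 = (L_1':g)$.

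Using the viewpoint of Lemma~\ref{PuntoEstable} that the recursions take the form $L_j = \phi_1^{-1}\phi_2(L_{j-1})$ (and analogously for the other three flags), each flag collapses by a brief induction. The equality $\phi_1(L_1') = \phi_2(gL_1') = 0$ forces $L_j' = L_1'$ for all $j\geq 1$; the inclusion $\phi_1(\BB) = \phi_2(g\BB) \subseteq \phi_2(\BB)$ forces $F_j = \BB$. The recursion for $L_j$ reduces to $L_{j+1} = \{a : ga \in L_j + L_1'\}$, and since $L_1' \subseteq L_j$ holds by induction this becomes $(L_j:g)$, giving $L_j = (L_1 : g^{j-1})$. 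The recursion $F_{j+1}' = gF_j' + L_1'$ yields $F_j' = (g^j) + L_1'$ inductively, the cross terms $g^k L_1'$ being absorbed into $L_1'$. The stabilization claim then follows by a descending-chain argument in the Artinian local ring $\BB$, using the equivalence $g^i \in L_1 \Leftrightarrow g^{i+1} \in L_1'$ coming from $L_1 = (L_1':g)$, together with a Nakayama-type argument (using that $g$ lies in the maximal ideal) to handle $F_j'$.

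For the dimension formula, I would apply the exact sequences~(\ref{teo1}) and~(\ref{teo2}). Since $L_j' \cap L_1 = L_1'$, the left-hand terms simplify to $\BB/((g)+L_1')$ and $\Ann_\BB(L_1')/\Ann_\BB(L_1)$. Writing $\tau_1 := \dim_\KK \phi_1(\BB)$, Gorenstein duality $\dim_\KK \Ann_\BB(I) = \mu - \dim_\KK I$ together with the short exact sequence $0 \to (0{:}g) \to L_1 \xrightarrow{g\,\cdot} L_1' \cap (g) \to 0$ shows that both left terms have dimension $\tau_2 - \tau_1$. A parallel computation of the right-hand terms, combined with the identity $\dim_\KK \HH_{2j}(\cG^1_\BB) = \dim_\KK \HH_{2j+1}(\cG^1_\BB) = 2\nu - 2\tau_1$ for $j\geq 1$ (obtained from Proposition~\ref{lema2} via the isomorphism $\kappa^*\colon H_1 \to H^1$), shows that the difference $\dim_\KK \HH_j(\cG^2_\BB) - \dim_\KK \HH_{j-2}(\cG^2_\BB)$ equals $2(\nu - \tau_2)$ for every $j\geq 2$. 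Telescoping from the base cases $\dim_\KK \HH_0(\cG^2_\BB) = \nu$ and $\dim_\KK \HH_1(\cG^2_\BB) = 2\nu - \tau_2$ (the latter using $\langle \TAU{1},\TAU{2}\rangle = \langle \TAU{2}\rangle$) delivers the formula $\dim_\KK \HH_j(\cG^2_\BB) = \nu + j(\nu-\tau_2)$.

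The main obstacle will be this last dimension bookkeeping: both the left- and right-hand terms of the exact sequences carry the auxiliary invariant $\tau_1$, and the argument hinges on showing these contributions cancel exactly, leaving only $\tau_2$ and $\nu$ in the final answer. In particular, one must identify the image of multiplication by $\TAU{2}$ inside $\HH_{2j+1}(\cG^1_\BB)$ as having dimension $\tau_2 - \tau_1$, which amounts to verifying that the kernel of $a \mapsto a\TAU{2}$, viewed inside $\HH_{2j+1}(\cG^1_\BB)$, is exactly $F_1' = (g)+L_1'$.
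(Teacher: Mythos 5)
Your computation of the four flags is essentially the paper's own argument (the paper runs the same inductions elementwise rather than through the $\phi_1^{-1}\phi_2$ formulation of Lemma \ref{PuntoEstable}), and your base cases $\dim_\KK\HH_0(\cG^2_\BB)=\nu$ and $\dim_\KK\HH_1(\cG^2_\BB)=2\nu-\tau_2$ are correct. The gap sits exactly where you yourself locate it: the dimension bookkeeping in the telescoping step. Keeping the syzygies in their original order, the increment $\dim\HH_{j+2}(\cG^2_\BB)-\dim\HH_{j}(\cG^2_\BB)$ equals $\dim(\hbox{right term})-\dim(\hbox{left term})$ in (\ref{teo1})--(\ref{teo2}); you correctly get $\tau_2-\tau_1$ for both left terms, but the right terms are quotients of $\HH_*(\cG^1_\BB)$ (of dimension $2\nu-2\tau_1$) by the \emph{images} of $\frac{\Ann_\BB(L_1')}{\Ann_\BB(L_1)}$ under $a\mapsto[(a,0)^t]$ and of $\frac{\BB}{F_1'}$ under $b\mapsto b\TAU{2}$, so your cancellation needs both of these maps to be injective, i.e.\ both images to have dimension exactly $\tau_2-\tau_1$. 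For the odd sequence you name the needed fact, and it is indeed easy ($b\TAU{2}\in\langle\TAU{1}\rangle_{H_1}$ iff $b\in F_1'$; this is the injectivity the paper records just before Lemma \ref{lema4a}). But you never address the even sequence: nothing in your sketch excludes extra kernel for $\frac{\Ann_\BB(L_1')}{\Ann_\BB(L_1)}\ni a\mapsto[(a,0)^t]\in\HH_{2j}(\cG^1_\BB)$, that is, elements $a\in\Ann_\BB(L_1')\setminus\Ann_\BB(L_1)$ with $(a,0)^t\in\langle(-c_{12},f_1)^t,(c_{11},f_2)^t\rangle$. If that kernel were nonzero the even increments would exceed $2(\nu-\tau_2)$ and the formula $\dim_\KK\HH_j(\cG^2_\BB)=\nu+j(\nu-\tau_2)$ would not follow; so as written the proof is incomplete at its crux, and the whole $\tau_1$-cancellation is carried on an unverified claim.

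The paper avoids this entirely by interchanging the two syzygies before invoking Theorem \ref{CDos}. After the swap the flags trade primes, so the theorem's $F_1'$ becomes the original $F_1=\BB$, making $\frac{F'_{j-1}}{F'_{j-1}\cap F_1}=0$, while $L_1'\subset L_1\subset L_j$ gives $L_j\cap L_1'=L_1'$, so $\frac{\Ann_\BB(L_j\cap L_1')}{\Ann_\BB(L_1')}=0$ as well: every left term and every correction denominator vanishes, the right terms are the full groups $\HH_*(\cG^1_\BB)$ built from the syzygy $\TAU{2}$, of dimension $2\nu-2\tau_2$ by Proposition \ref{lema2}, and the increments are immediate --- $\tau_1$ never enters and no injectivity of correction maps needs checking. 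If you prefer your original-order route you must actually prove the even-degree injectivity above (e.g.\ by dualizing and tracing it through Lemma \ref{lema4} as in the proof of Theorem \ref{CDos}); otherwise adopt the paper's swap, which reduces the dimension count to two lines.
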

\begin{proof} For $j\geq1$ we have
$F_j=\BB$ directly from the assumption, as well as
$L_j^\prime = L_1^\prime$, since $L_1^\prime \bigl( \begin{smallmatrix} c_{11} \cr c_{12}\end{smallmatrix} \bigr) =0$.

We now prove that $F^\prime_j=<g^j>+L'_1$ for $j\geq 1$ by induction.
We have that
$$
a \in F^\prime_1 :=(\TAU{1}: \TAU{2} )_{H_1(\cK_\BB(f_1,f_2))}
= ( g\TAU{2}: \TAU{2} )_{H_1(\cK_\BB(f_1,f_2))}
$$
means that there exists $b\in\BB$ such that
$$(a-bg)\TAU{2}=0 \ \ \Leftrightarrow (a-bg) \ \ \in (0:\TAU{2}) \ \ \Leftrightarrow \ \ a \in <g> + (0:\TAU{2}). $$
We assume that our claim is true for $j$ and prove for $j+1.$
$$
a \in F^\prime_{j+1} :=(F^\prime_j \TAU{1}: \TAU{2} )_{H_1(\cK_\BB(f_1,f_2))}
= (F^\prime_j g\TAU{2}: \TAU{2} )_{H_1(\cK_\BB(f_1,f_2))}
$$
means that there exists $b = c  g^j + m \in F^\prime_j $, with $c\in\BB, \ m \in (0: \TAU{2} ).$ Then,

$$(a-bg)\TAU{2}=0 \Rightarrow  (a-bg) \in (0:\TAU{2}) \Rightarrow (a-c  g^{j+1} + mg) \in (0:\TAU{2}) \Rightarrow a \in <g^{j+1}> + (0:\TAU{2}). $$

On the other hand, if $a \in <g^{j+1}> + (0:\TAU{2}),$
there are $c\in\BB, \ m \in (0: \TAU{2} )$ such that  $a= c  g^{j+1} + m.$ Then
$$
a\TAU{2}=c  g^{j+1}\TAU{2} + m\TAU{2}=c  g^{j}\TAU{1}\in \BB g^j\TAU{1}\subset F_{j}\TAU{1}.
$$
by induction hypothesis.
Therefore, $a\in F_{j+1}.$

Now we  prove $L_j= (L_1:g^{j-1})$ for $j\geq1$ by induction. We note that it is true for $j=1$ by the convention $g^0=1.$
The
assumption implies:
\begin{equation} \label{b1}
<\TAU{1}>\subset<\TAU{2}>  \hskip 1cm,\hskip1cm  L_1^\prime := (0:\TAU{2}) \subset (0:\TAU{1}):=L_1 \subset \BB.
\end{equation}

Let $a \in L_{j+1} :=(L_j\TAU{2}:\TAU{1})=(L_j\TAU{2}:g\TAU{2})$, then $(ag-b) \in (0:\TAU{2})$, with
$b \in (L_1:g^{j-1})$. Multiplying by $g^{j-1}$ we obtain
$$ag^j = bg^{j-1}+(0:\TAU{2}) \subset (0:\TAU{1}) \Rightarrow a \in (L_1:g^j).$$
On the other hand, if $a \in (L_1:g^j)$ we have that $ag\in (L_1:g^{j-1})=L_j.$ Then,
$$
a\TAU{1}=ag\TAU{2}\in L_j\TAU{2}.
$$
Therefore $a\in L_{j+1}$.

Directly from Theorem  \ref{CDos} we obtain
$\dim_\KK \HH_0(\cG^2_\BB) = \nu $ and
$\dim_\KK \HH_1(\cG^2_\BB) = 2 \nu     -\tau_2$.

To  prove the rest, we interchange the roles of the 2 syzygies. We have $\frac{\BB}{F_1}=0$, so the flag induced
from $F^\prime_*$ is $0$. Now $L_1^\prime  \subset L_1 \subset L_j$ from (\ref{b1}), so that
the invariants $\frac{L_1^\prime}{L_j \cap L_1^\prime} = 0$.
Hence the algorithm to construct the (j+2)-cycles from the $j$-cycles imposes no conditions and we have
freedom of choosing $\HH_*(\cG^1_\BB)$, which has dimension $2\nu - 2 \tau_2$.
\end{proof}

gmont@cimat.mx

Centro de Investigaci\'on en Matem\'aticas, AP 402, Guanajuato, 36000, M\'exico

\vskip 3mm
lcn8m@virginia.edu

Department of Mathematics, University of Virginia, Charlottesville, VA 22903, USA

\end{document}